\newtheorem{lemma}{Lemma}
\newtheorem{thm}{Theorem}
\newtheorem{prop}{Proposition}
\newtheorem{definition}{Definition}
\newtheorem{corollary}{Corollary}
\newtheorem{remark}{Remark}
\numberwithin{equation}{section}
\title[Approximation by Neural Network Sampling Operators in Mixed Lebesgue Spaces]
{Approximation by Neural Network Sampling Operators in Mixed Lebesgue Spaces}
\keywords{Linear Operators, Modulus of Smoothness, Degree of Approximation, Neural Network Operators, Mixed-Lebesgue spaces}
\subjclass[2010] {41A35; 94A20; 41A25}
\author{Arpan Kumar Dey}
\address{Department of Mathematics, Indian Institute of Technology Madras, Chennai-600036, Tamil Nadu, India}
\email{arpanmathematics@gmail.com }
\author{A. Sathish Kumar}
\address{Department of Mathematics, Indian Institute of Technology Madras, Chennai-600036, Tamil Nadu, India}
\email{sathishkumar@iitm.ac.in, mathsatish9@gmail.com}
\author{P. Devaraj}
\address{School of Mathematics, Indian Institute of Science Education and Research, Thiruvananthapuram, India}
\email{devarajp@iisertvm.ac.in}
\begin{document}

\begin{abstract}
In this paper, we prove the rate of approximation for the Neural Network Sampling Operators activated by sigmoidal functions with mixed Lebesgue norm in terms of averaged modulus of smoothness for a bounded measurable functions on bounded domain. 
In order to achieve the above result, we first establish that the averaged modulus of smoothness is finite for certain suitable subspaces of $L^{p,q}(\mathbb{R}\times\mathbb{R}).$
Using the properties of averaged modulus of smoothness, we estimate the rate of approximation of certain linear operators in mixed Lebesgue norm. Then, as an application of these linear operators, we obtain the Jackson type approximation theorem, in order to give a characterization for the rate of approximation of neural network operators in-terms of averaged modulus of smoothness in mixed norm. Lastly, we discuss some examples of sigmoidal functions and using these sigmoidal functions, we show the implementation of continuous and discontinuous functions by neural network operators .

\end{abstract}

\maketitle
\section{Introduction and Preliminaries}\label{section1}

In sampling and reconstruction, the signals are usually defined in time domains or space fields. However, in practice, some signals are time-varying. This implies that the signals can live in time-space domains at the same time. A suitable tool for analyzing such time-space signals is mixed Lebesgue spaces. Because in mixed Lebesgue spaces, we can consider the integrability of each variable independently. This is major different from the classical Lebesgue spaces, see \cite{Aben1, Aben2, Fer} and the references therein.
Benedek was the first person who initiated the study of mixed Lebesgue spaces, see \cite{Aben1, Aben2}. Inspired by Benedek works, Fernandez \cite{Fer} analysed the singular integral operators for functions in mixed Lebesgue spaces. Further, Francia, Ruiz and Torrea \cite{Fran} developed the theory for Calder\'on-Zygmund operators in the settings of mixed Lebesgue spaces. Recently, several researchers have analyzed the sampling and reconstruction problems in mixed Lebesgue spaces, see \cite{Rto, liu, jia, siva1, deva1, jia12} and the references therein. In this paper, we analyze the Neural Network operators in mixed Lebesgue spaces in view of approximation theory.

The feed-forward neural network operators (FNNs) with one hidden layer are defined by
\begin{eqnarray*}
N_{p}(x):=\sum_{j=0}^{p}c_{j}\sigma(\langle \alpha_{j}. x\rangle+\beta_{j}), \,\,\,\ x\in\mathbb{R}^{n},\,\,\,\ n \in\mathbb{N}
\end{eqnarray*}
where $\beta_{j}\in\mathbb{R},$ $0\leq j\leq p,$ are called thresholds, $\alpha_{j}\in\mathbb{R}^{n}$ are the connection weights, $c_{j}\in\mathbb{R}$ are the coefficients. Furthermore, $\langle \alpha_{j} . x\rangle$ denotes the inner product of $\alpha_j$ and $x$, and $\sigma$ denotes activation function of the neural network.

In past three decades, the approximation by NN operators have been widely analyzed by several researchers in various settings. Using the tools from Functional Analysis, Cybenko analysed the approximation of continuous function by NN operators in \cite{Cybenko}. The approximation of continuous functions by NN operators with single and two hidden layers are discussed in \cite{Funahashi}. Hornik et al. \cite{Hornik} approximated the measurable functions by multilayer feedforward networks. Anastassiou \cite{Ana1} initiated the study of rate of approximation by NN operators in-terms of modulus of smoothness. Further, he discussed some quantitative convergence results in \cite{Ana2}. Recently, various approximation results for the neural network operators have been discussed in different settings, see \cite{GAM, DCR, cc1, cc2, cc3, cc4, cc5, cc6, cc7, cc8} and the references therein.\\
We begin with the definition of mixed Lebesgue spaces. For $1\leq p,q<\infty,$ let $L^{p,q}(\mathbb{R}\times \mathbb{R}^{d})$ denote the space of all complex valued measurable functions $f$ defined on $\mathbb{R}\times \mathbb{R}^{d}$ such that
$$\|f\|_{L^{p,q}}:=\left(\int_{\mathbb{R}}\left(\int_{\mathbb{R}^{d}}|f(x,\mathbf{y})|^{q}d\mathbf{y}\right )^{\frac{p}{q}}dx\right)^{\frac{1}{p}}<\infty.$$
Further, let $L^{\infty, \infty}(\mathbb{R} \times \mathbb{R}^{d})$ denote the set of all complex valued measurable functions on $\mathbb{R}^{d+1}$ such that $\|f\|_{L^{\infty, \infty}} := ess \sup |f| < \infty$.
For $ 1\leq p,q < \infty$, $\ell^{p,q}(\mathbb{Z}\times \mathbb{Z}^{d})$ denotes the space of all complex sequences $c=\big(c(k_{1},k_{2})\big)_{(k_{1}\in \mathbb{Z}, k_{2}\in \mathbb{Z}^{d})}$ such that
$$\|c\|_{\ell^{p,q}} := \left(\sum_{k_{1}\in \mathbb{Z}}\left(\sum_{k_{2} \in \mathbb{Z}^{d}}|c(k_{1}, k_{2})|^{q}\right)^{p/q}\right)^{1/p} < \infty.$$
We denote by $\ell^{\infty, \infty}(\mathbb{Z} \times \mathbb{Z}^{d}),$ the space of all complex sequences on $\mathbb{Z}^{d+1}$ such that $$\|c\|_{\ell^{\infty, \infty}} := \displaystyle \sup_{k \in \mathbb{Z}^{d+1}}|c(k)| < \infty.$$
We note that $L^{p,p}(\mathbb{R} \times \mathbb{R}^{d}) = L^{p}(\mathbb{R}^{d+1})$ and $\ell^{p,p}(\mathbb{Z} \times \mathbb{Z}^{d}) = \ell^{p}(\mathbb{Z}^{d+1})$ for $1 \leq p < \infty.$ \\

In this paper, we analyze neural network sampling operators in certain suitable subspaces of $L^{p,q}(\mathbb{R}\times\mathbb{R}).$ We observe that the sampling operators are not well defined in $L^{p,q}(\mathbb{R}\times\mathbb{R}),$, so we need to restrict the sampling operator to certain suitable subspace of $L^{p,q}(\mathbb{R}\times\mathbb{R}).$ Motivated by the work of Bardaro et al. \cite{Lpbut}, we define the subspace $\Lambda^{p,q}(\mathbb R\times \mathbb R^{d}),$ and analyze the sampling type neural network operators in $\Lambda^{p,q}(\mathbb R\times \mathbb R^{d}).$. In order to define these subspace, we need certain special type of partitions called admissible partitions, which is defined below: 
 
\begin{definition}
We say that a sequence $\Sigma_{x}=(x_{k})_{k\in \mathbb Z} \subset \mathbb R $ is an admissible partition of $\mathbb R$ if
$$\displaystyle 0< \underline{\Delta}:= \inf_{k\in \mathbb Z}(x_{k}-x_{k-1})\leq \sup_{k\in \mathbb Z}(x_{k}-x_{k-1}):= \bar{\Delta}<\infty,$$
where $\bar{\Delta}$ and $\underline{\Delta}$ denotes upper and lower mesh size respectively.
Again in $\mathbb R^d$ for every $i=1,2,...,d$ we take the admissible partition over the $i^{th}$ axis  $\Sigma_{i}=(y_{i,j_{i}})_{j_{i}\in \mathbb Z}$ such that
$$\displaystyle 0< \underline{\Delta}:=\min_{i=1,...,d} {\inf_{{j_{i}\in \mathbb Z}}}(y_{i,j_{i}}-y_{i,j_{i}-1})\leq \max_{i=1,...,d} {\sup_{{j_{i}\in \mathbb Z}}}(y_{i,j_{i}}-y_{i,j_{i}-1}):=\bar{\Delta}<\infty.$$

A sequence $\Sigma_{y}=(y_{j})_{j\in \mathbb Z^d}, y_{j}=(y_{1,j_{1}},...,y_{d,j_{d}})$ and $j=(j_{1},...,j_{d})$ is called an admissible sequence if it is written as
 $$\Sigma=\Sigma_{1}\times \Sigma_{2}\times...\times \Sigma_{d}. $$
 For $\Sigma_{i}$ we consider the cube $$ Q_{j}=\prod_{i=1}^{d}[y_{{i},j_{i}-1},y_{{i},{j_{i}}}),$$
 with volume
 $$ \Delta_{j}:= \prod_{i=1}^{d}(y_{j_{i}}- y_{{j_{i}-1}}).$$
 \end{definition}
 Now we introduce the notion of an admissible partition $\Sigma$ in $\mathbb R \times \mathbb R^{d}$ and the corresponding  $l^{p,q}(\Sigma)$-norm of a function $f:\mathbb R\times \mathbb R^{d} \to \mathbb C.$
 \begin{definition}
We say that a sequence $\Sigma=(x_{k},y_{j})_{{k\in \mathbb Z},{j\in \mathbb Z^{d}}}$ is an admissible sequence or partition if $\Sigma_{x}=(x_{k})_{k\in \mathbb Z}$ and $\Sigma_{y}=(y_{j})_{j\in \mathbb Z^d}$ are admissible partitions of $\mathbb R$ and $\mathbb R^{d}$ respectively.

    For the admissible partition  $\Sigma=(x_{k},y_{j})_{{k\in \mathbb Z},{j\in \mathbb Z^{d}}}$ we construct \\
    $$ Q_{jk}=[x_{k-1},x_{k})\times \prod_{i=1}^{d}[x_{{i},j_{i}-1},x_{{i},{j_{i}}})$$
    with volume $$ \Delta_{jk}:=(x_{k}-x_{k-1})\times \prod_{i=1}^{d}(x_{{i},j_{i}}- x_{{i},{j_{i}-1}}).$$
    The $l^{p,q}(\Sigma)$-norm of a function $f:\mathbb R\times \mathbb R^{d} \to \mathbb C$ is defined by
    $$\|f\|_{l^{p,q}(\Sigma)}:=\left\{ \sum_{k\in \mathbb Z}\left(
  \sum_{j\in \mathbb Z^{d}} \sup_{z\in Q_{jk}}|f(z)|^{q}\Delta_{jk} \right)^{\frac{p}{q}}\right\}^{\frac{1}{p}}$$
  where $z=(x,\bold y)\in \mathbb R \times \mathbb R^{d}$.


  \end{definition}
  
  \begin{definition}
      Let $1\leq p,q<\infty$. The $\Lambda^{p,q}$ space is defined by
      $$\Lambda^{p,q}=\left\{f\in M(\mathbb R\times \mathbb R^{d}): \|f\|_{l^{p,q}(\Sigma)}<\infty \right\}.$$
  \end{definition}

In this paper, we analyze the convergence behavior of linear operators in the settings of a mixed Lebesgue norm. In particular, we prove the convergence rate of the family of linear operators
$\{\mathscr{L}_{\eta}:\Lambda^{p,q}\longrightarrow L^{p,q}({\mathbb R \times \mathbb R})\}_{\eta \in \mathbb I},$ where $\mathbb I$ is an index set. Furthermore, as an application of our analysis of these linear operators, we obtain the rate of convergence of the following neural network sampling operators in mixed Lebesgue norm: Let $I=[-1,1]$. For $f:I\times I\to \mathbb{R},$ the neural network operators is given by 
\[  F_{n}f(x,y)=  \displaystyle\frac{ \displaystyle\sum_{k=-n}^{n}\sum_{j=-n}^{n}f{\left( \frac{k}{n}, \frac{j}{n}\right)}\psi_{\sigma}(nx-k, ny-j)}{ \displaystyle\sum_{k=-n}^{n}\sum_{j=-n}^{n}\psi_{\sigma}(nx-k, ny-j)},\]

where $\displaystyle \psi_{\sigma}(x,y):= \phi_{\sigma}(x)\times \phi_{\sigma}(y)$, $\phi_{\sigma}$ is defined in the following way:

\[ \phi_{\sigma}(v):= \frac{1}{2}[\sigma(v
+1)-\sigma(v-1)], \ v\in \mathbb{R}. \]

Here, the sigmoidal function $\sigma$ is defined in the following way: 
 \begin{definition}
      We say that a measurable function $\sigma:\mathbb{R}\to \mathbb{R}$ is a sigmoidal function if and only if $\lim_{x\to-\infty}\sigma(x)=0$ and $\lim_{x\to\infty}\sigma(x)=1.$
  \end{definition}

Further, $\sigma:\mathbb{R}\to \mathbb{R}$ is a non-decreasing sigmoidal function such that $\sigma(2)>\sigma(0)$ satisfies the following assumptions:
\begin{itemize}
    \item[($A_1$)] $\sigma(v)-\frac{1}{2}$ is an odd function;
    \item[($A_2$)] $\sigma\in C^{2}(\mathbb{R})$ is concave for $x\geq 0$;
    \item[($A_3$)] $ \sigma(v)=\mathcal{O}(|v|^{-1-\alpha})$ as $x\to -\infty$, for some $\alpha>0.$
\end{itemize}
The paper is organized as follows. Our main contribution in this paper is that we estimate the rate of approximation of certain linear operators in mixed Lebesgue norm. In order to do that in Sec \ref{section2}, we introduce the averaged modulus of smoothness for functions in $L^{p,q}(I),$ where $I\subset \mathbb{R}\times \mathbb{R}^{d}$ and analyze their properties in a detailed way. In Sec \ref{section3}, we consider the suitable subspace of $L^{p,q}(\mathbb{R}\times \mathbb{R}^{d})$ for which the averaged modulus of smoothness is finite. We prove several properties of these subspaces in this section. Using these results, we prove the rate of convergence of linear operators in-terms of averaged modulus of smoothness in Sec \ref{section4}. As an application of Sec \ref{section4}, we obtain the rate of approximation of neural network operators in term of averaged modulus of smoothness in Sec \ref{section5}. In last section, we discuss some examples of sigmoidal functions and using these sigmoidal functions, we show the implementation of continuous and discontinuous functions by neural network operators .

\section{The Averaged Modulus of Smoothness and its Properties}\label{section2}

In this section, we introduce and study the properties of averaged modulus of smoothness for functions in $L^{p,q}(I),$ where $I\subset \mathbb{R}\times \mathbb{R}^{d}.$



Further, let
\begin{eqnarray*}
    (\Delta_{h,\mathbf{h}}^{r}f)(x,\mathbf{y}):=\large \sum_{j=0}^{r}(-1)^{r-j}{\binom{r}{j}}f(x+jh,\mathbf{y}+j\mathbf{h}),
\end{eqnarray*}
where $x\in \mathbb{R},\mathbf{y}\in \mathbb{R}^{d},r\in \mathbb{N}$ and $\mathbf{h}=(h,.....,h)\in \mathbb{R}^{d}$.

Now we define the local modulus of smoothness for functions in $M\left([a,b]\displaystyle\times\prod_{i=1}^{d}[c_{i},d_{i}]\right)$ and $M(\mathbb{R}\times \mathbb{R}^{d}).$
Let $x\in [a,b], y_{i}\in [c_{i},d_{i}], 0\leq \delta^{*}\leq \left(\frac{b-a}{r}\right), 0\leq\delta_{i}\leq \left(\frac{d_{i}-c_{i}}{r}\right)$, for $i=1,2,...,d.$ Let us define $\delta:=\displaystyle \min_{i\in \{1,2,...,d\}}\{\delta^{*},\delta_{i}\}.$

\begin{definition}
    Let $\displaystyle f\in M\left([a,b]\times\prod_{i=1}^{d}[c_{i},d_{i}]\right).$ Then, the local modulus of smoothness is  defined by\\

    \noindent
    $\displaystyle\omega_{r}\left(f,(x,\mathbf{y});\delta;M\left([a,b]\times\prod_{i=1}^{d}[c_{i},d_{i}]\right)\right)$
    \begin{eqnarray*}
        &:=& \sup_{t, t+rh \in \left[x-\frac{r\delta}{2},x+\frac{r\delta}{2}\right]\bigcap[a,b]}\ \  \sup_{\bold{s},\bold{s}+r\bold{h}\in \left[\bold{y}-\frac{r\mathbf{\delta}}{2}, \bold{y}+\frac{r\mathbf{\delta}}{2}\right]\bigcap \prod_{i=1}^{d}[c_{i},d_{i}]} \left\{ |(\Delta_{h,\mathbf{h}}^{r}f)(t,\mathbf{s})| \right \}.
    \end{eqnarray*}

For $f\in M(\mathbb{R}\times \mathbb{R}^{d}),$ the local modulus of smoothness at a point $(x,\mathbf{y})\in \mathbb{R}\times \mathbb{R}^{d}$ for $\delta \ge 0$ is defined by\\

    \noindent
     $\omega_{r}(f,(x,\mathbf{y});\delta;M(\mathbb{R}\times \mathbb{R}^{d}))$
     \begin{eqnarray*}
         &:=&  \displaystyle \sup_{t, t+rh \in \left[x-\frac{r\delta}{2},x+\frac{r\delta}{2}\right]}\ \  \sup_{\bold{s},\bold{s}+r\bold{h}\in \left[\bold{y}-\frac{r\bold{\delta}}{2}, \bold{y}+\frac{r\bold{\delta}}{2}\right]} \left\{ |(\Delta_{h,\mathbf{h}}^{r}f)(t,\mathbf{s})|:\ \forall i\right \}.
     \end{eqnarray*}
\end{definition}
\begin{remark}
    For $f\in M(\mathbb{R}\times \mathbb{R}^{d})$, the local modulus of smoothness will coincide with the local modulus of smoothness of f defined on some finite $(k+1)$ cells with special boundary points. For all $\displaystyle(x,\mathbf{y})\in[a,b]\times\prod_{i=1}^{d}[c_{i},d_{i}]$\ and for all $c\ge \frac{r\delta}{2}$ with $\delta\ge 0$ we have,\\
    $\displaystyle \omega_{r}\left(f,(x,\mathbf{y});\delta;M\left([a-c,b-c]\times\prod_{i=1}^{d}[c_{i}-c,d_{i}-c]\right)\right)=\omega_{r}\left(f;(x,\mathbf{y});\delta;M(\mathbb{R}\times \mathbb{R}^{d})\right).$
\end{remark}
The following observations can be made from the definition of local modulus of smoothness.
\begin{itemize}
    \item [(i)] $\omega_{r}(f,(x,\mathbf{y});\delta;M(\mathbb{R}\times \mathbb{R}^{d}))\leq 2^{r} \displaystyle \sup_{(x,\mathbf{y})\in \mathbb{R}\times \mathbb{R}^{d}}|f(x,\mathbf{y})|$.\\

    \item [(ii)] For $\delta \leq \delta^{\prime}$ with $\delta$ as given previously and $\delta^{\prime}= \displaystyle \min_{i\in \{1,2,...,d\}}\{{\delta^{\prime}}^{*},\delta^{\prime}_{i}\}.$\\
    $\omega_{r}(f,(x,\bold y);\delta;M(\mathbb{R}\times \mathbb{R}^{d}))\leq \omega_{r}(f,(x,\bold y);\delta^{\prime};M(\mathbb{R}\times \mathbb{R}^{d})).$
\end{itemize}
Now we define the $L^{p,q}$-averaged modulus of smoothness for functions in\\
$M\left([a,b]\displaystyle\times\prod_{i=1}^{d}[c_{i},d_{i}]\right)$ and $M(\mathbb{R}\times \mathbb{R}^{d}).$
\begin{definition}
  Let $1\leq p,q <\infty$ and $f\in M\left([a,b]\times \displaystyle \prod_{i=1}^{d}[c_{i},d_{i}]\right).$ Then, for $\  0\leq \delta\leq \displaystyle\inf_{i=1,2,...,d}\left\{\frac{(b-a)}{r},\frac{(d_{i}-c_{i})}{r}\right\}$ the $L^{p,q}$-averaged modulus of smoothness of order $r\in \mathbb N$ is defined by
  \\
  \noindent
  $\displaystyle \tau_{r}\left(f;\delta;M\left([a,b]\times\prod_{i=1}^{d}[c_{i},d_{i}]\right)\right)_{p,q}$
   \begin{eqnarray*}
    &:=& \left\|\displaystyle \omega_{r}\left(f;.;\delta;M([a,b]\times \prod_{i=1}^{d}[c_{i},d_{i}]\right)\right\|_{L^{p,q}\left([a,b]\times\prod_{i=1}^{d}[c_{i},d_{i}]\right)}\\ &=&\displaystyle \left(\int_{[a,b]}\left(\int_{\prod_{i=1}^{d}[c_{i},d_{i}]}\left|\omega_{r}(f;.;\delta; M([a,b]\times\prod_{i=1}^{d}[c_{i},d_{i}])\right|^{q}d\bold y\right )^{\frac{p}{q}}dx\right)^{\frac{1}{p}}.
   \end{eqnarray*}

    Let $f\in M(\mathbb{R}\times \mathbb{R}^{d})$ and $\delta\ge 0$. Then, the averaged modulus of smoothness is defined by
    \begin{eqnarray*}
        \tau_{r}(f;\delta;M(\mathbb{R}\times \mathbb{R}^{d}))_{p,q}
   &:= & \| \omega_{r}(f;.;\delta; M(\mathbb{R}\times \mathbb{R}^{d})\|_{L^{p,q}(\mathbb{R}\times \mathbb{R}^{d})}\\
   &=&\displaystyle \left(\int_{\mathbb{R}}\left(\int_{\mathbb{R}^{d}}\left|\omega_{r}(f;.;\delta; M(\mathbb{R}\times \mathbb{R}^{d})\right|^{q}d\bold y\right )^{\frac{p}{q}}dx\right)^{\frac{1}{p}}.
    \end{eqnarray*}
\end{definition}

\begin{prop}\label{P1}
    Let $f\in M(\mathbb{R}\times \mathbb{R}^{d}),\ r\in \mathbb N,  1\leq p, q< \infty.$ Then, we have
$$\displaystyle \lim_{n\to\infty}\tau_{r}\left(f;\delta;M\left([-n,n]\times[-n,n]^{d}\right)\right)_{p,q}= \tau_{r}(f;\delta;M(\mathbb{R}\times \mathbb{R}^{d}))_{p,q}.$$
\end{prop}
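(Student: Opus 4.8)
The plan is to write both sides of the identity as mixed $L^{p,q}$-norms of a single ``profile'' function and then exhaust $\mathbb{R}\times\mathbb{R}^{d}$ by the boxes $[-n,n]\times[-n,n]^{d}$, applying the monotone convergence theorem in each of the two iterated integrals. Concretely, I would put
\[
  g(x,\mathbf{y}):=\omega_{r}\big(f,(x,\mathbf{y});\delta;M(\mathbb{R}\times\mathbb{R}^{d})\big),
\]
and, for each $n$ with $\delta\le 2n/r$, let $g_{n}\colon\mathbb{R}\times\mathbb{R}^{d}\to[0,\infty)$ be the function equal to $\omega_{r}\big(f,(x,\mathbf{y});\delta;M([-n,n]\times[-n,n]^{d})\big)$ on $[-n,n]\times[-n,n]^{d}$ and to $0$ outside that box (and set $g_{n}\equiv 0$ for the finitely many smaller $n$). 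Since $g_{n}$ vanishes off the box, the definition of the $L^{p,q}$-averaged modulus of smoothness gives $\tau_{r}\big(f;\delta;M([-n,n]\times[-n,n]^{d})\big)_{p,q}=\|g_{n}\|_{L^{p,q}}$ and $\tau_{r}\big(f;\delta;M(\mathbb{R}\times\mathbb{R}^{d})\big)_{p,q}=\|g\|_{L^{p,q}}$, so the proposition reduces to showing $\|g_{n}\|_{L^{p,q}}\to\|g\|_{L^{p,q}}$ in $[0,\infty]$.

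First I would pin down the pointwise behaviour of the sequence $(g_{n})$. Fix $(x,\mathbf{y})$. On the box $[-n,n]\times[-n,n]^{d}$ the supremum defining $\omega_{r}$ runs over $t,t+rh\in[x-\tfrac{r\delta}{2},x+\tfrac{r\delta}{2}]\cap[-n,n]$ and $\mathbf{s},\mathbf{s}+r\mathbf{h}\in[\mathbf{y}-\tfrac{r\boldsymbol{\delta}}{2},\mathbf{y}+\tfrac{r\boldsymbol{\delta}}{2}]\cap[-n,n]^{d}$; enlarging $n$ enlarges these admissible parameter sets, so $g_{n}\le g_{n+1}$, while deleting the intersections altogether---which is exactly what the definition on all of $\mathbb{R}\times\mathbb{R}^{d}$ does---enlarges them still further, so $g_{n}\le g$ for every $n$. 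Moreover, once $n$ is so large that $[x-\tfrac{r\delta}{2},x+\tfrac{r\delta}{2}]\subset[-n,n]$ and $[y_{i}-\tfrac{r\delta}{2},y_{i}+\tfrac{r\delta}{2}]\subset[-n,n]$ for every $i=1,\ldots,d$, those intersections are vacuous and $g_{n}(x,\mathbf{y})=g(x,\mathbf{y})$; this is precisely the content of the Remark following the definition of the local modulus of smoothness. Hence $0\le g_{n}\uparrow g$ pointwise on $\mathbb{R}\times\mathbb{R}^{d}$. I would also record here that each $g_{n}$ and $g$ is a measurable function of $(x,\mathbf{y})$, which is a standard fact in the theory of averaged moduli of smoothness.

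It then remains to transfer this monotone pointwise convergence to the mixed norm. For each fixed $x$ we have $g_{n}(x,\cdot)^{q}\uparrow g(x,\cdot)^{q}$ on $\mathbb{R}^{d}$, so the monotone convergence theorem yields $\int_{\mathbb{R}^{d}}g_{n}(x,\mathbf{y})^{q}\,d\mathbf{y}\uparrow\int_{\mathbb{R}^{d}}g(x,\mathbf{y})^{q}\,d\mathbf{y}$ in $[0,\infty]$; since $t\mapsto t^{p/q}$ is continuous and non-decreasing on $[0,\infty]$, the $x$-functions $\big(\int_{\mathbb{R}^{d}}g_{n}(x,\mathbf{y})^{q}\,d\mathbf{y}\big)^{p/q}$ increase to $\big(\int_{\mathbb{R}^{d}}g(x,\mathbf{y})^{q}\,d\mathbf{y}\big)^{p/q}$, and a second application of the monotone convergence theorem in $x$ gives $\|g_{n}\|_{L^{p,q}}^{p}\uparrow\|g\|_{L^{p,q}}^{p}$; taking $p$-th roots proves the assertion, and the argument automatically covers the degenerate case $\tau_{r}(f;\delta;M(\mathbb{R}\times\mathbb{R}^{d}))_{p,q}=\infty$. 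I do not anticipate a genuine obstacle: the exhaustion-plus-monotone-convergence scheme is the natural route, and the only points that need real care are the pointwise bookkeeping---the domination $g_{n}\le g$ everywhere and the eventual equality $g_{n}=g$ on the interior of the exhausting boxes, via the Remark---and recording the measurability of the local modulus of smoothness.
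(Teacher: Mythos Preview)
Your argument is correct and is essentially the paper's approach: both proofs rest on the pointwise inequality $\omega_r(f,\cdot;\delta;M([-n,n]\times[-n,n]^d))\le \omega_r(f,\cdot;\delta;M(\mathbb{R}\times\mathbb{R}^d))$, the Remark giving eventual equality once the $\tfrac{r\delta}{2}$-window of $(x,\mathbf y)$ lies inside the box, and a monotone exhaustion to pass to the limit. The only cosmetic difference is that the paper packages this as a sandwich $a_n\le\|g\|_{L^{p,q}([-n,n]\times[-n,n]^d)}\le a_{n+j}$ and lets $n\to\infty$, whereas you argue directly that $g_n\uparrow g$ and invoke the monotone convergence theorem twice; your version is in fact more explicit about that limiting step.
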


\begin{proof}
 Let $f\in M(\mathbb{R}\times \mathbb{R}^{d})$ and $\delta\ge0$. For $ n\ge r\delta$ and $j\in \mathbb N$ with $j\ge \frac{r\delta}{2}$ we have\\

 \noindent
 $\displaystyle\omega_{r}\left(f,(x,\mathbf{y});\delta;M\left([-n,n]\times[-n,n]^{d}\right)\right)$
 \begin{eqnarray*}
         &\leq&  \omega_{r}(f,(x,\mathbf{y});\delta;M(\mathbb{R}\times \mathbb{R}^{d}))\\
        &=&\displaystyle\omega_{r}\left(f,(x,\mathbf{y});\delta;M\left([-n-j,n+j]\times[-n-j,n+j]^{d}\right)\right).
 \end{eqnarray*}
 Now taking the $L^{p,q}$ norm on both side, we obtain\\

 \noindent
$ \tau_{r}\left(f;\delta;M\left([-n,n]\times[-n,n]^{d}\right)\right)_{p,q}$
   \begin{eqnarray*}
       &=& \left|\left| \displaystyle\omega_{r}\left(f,(x,\mathbf{y});\delta;M\left([-n,n]\times[-n,n]^{d}\right)\right)\right|\right|_{L^{p,q}}\\
  &\leq& \left|\left| \displaystyle\omega_{r}\left(f,(x,\mathbf{y});\delta;M\left(\mathbb R\times \mathbb R^{d}\right)\right)\right|\right|_{L^{p,q}}\\
  &\leq& \left|\left| \displaystyle\omega_{r}\left(f,(x,\mathbf{y});\delta;M\left([-n-j,n+j]\times[-n-j,n+j]^{d}\right)\right)\right|\right|_{L^{p,q}}\\
   &\leq&  \displaystyle\tau_{r}\left(f;\delta;M\left([-n-j,n+j]\times[-n-j,n+j]^{d}\right)\right)_{{p,q}}.
      \end{eqnarray*}
We obtain the desired estimate by taking the limit as $n\to \infty$. Hence, the proof is completed.
   \end{proof}

  \begin{lemma}
       Let $\displaystyle f\in M\left([a,b]\times\prod_{i=1}^{d}[c_{i},d_{i}]\right),\ 1\leq q<p< \infty,\ \frac{1}{p}+ \frac{1}{p^{\prime}}=1,\ \frac{1}{q}+ \frac{1}{q^{\prime}}=1.$ Then, we have
      \begin{itemize}
          \item [(i)] $\displaystyle \tau_{r}\left(f;\delta;M\left([a,b]\times\prod_{i=1}^{d}[c_{i},d_{i}]\right)\right)_{1,1}\leq K_{1} \displaystyle \tau_{r}\left(f;\delta;M\left([a,b]\times\prod_{i=1}^{d}[c_{i},d_{i}]\right)\right)_{p,q}$
          \item [(ii)] $\displaystyle \tau_{r}\left(f;\delta;M\left([a,b]\times\prod_{i=1}^{d}[c_{i},d_{i}]\right)\right)_{p,q} \leq K_{2}\displaystyle \left\{\tau_{r}\left(f;\delta;M\left([a,b]\times\prod_{i=1}^{d}[c_{i},d_{i}]\right)\right)_{1,1}\right\}^{\frac{1}{p}},$
      \end{itemize}
     where
     \begin{eqnarray*}
        \displaystyle K_{1}&:=& (b-a)^{\frac{1}{p^{\prime}}} \left(\prod_{i=1}^{d}(d_{i}-c_{i})\right)^{\frac{1}{q^{\prime}}}\\
       K_{2}&:=&\left \{\displaystyle 2^{r}\sup_{(x,\bold y)\in [a,b]\times\prod_{i=1}^{d}(d_{i}-c_{i})}|f(x,\bold{y})|\right\}^{\frac{1}{p^{\prime}}}.
     \end{eqnarray*}
      Furthermore, from (i) and (ii), we have
      \begin{gather*}
       \displaystyle \lim_{\delta\to 0+} \tau_{r}\left(f;\delta;M\left([a,b]\times\prod_{i=1}^{d}[c_{i},d_{i}]\right)\right)_{p,q}=0 \\
          \Updownarrow \\
        \displaystyle \lim_{\delta\to 0+} \tau_{r}\left(f;\delta;M\left([a,b]\times\prod_{i=1}^{d}[c_{i},d_{i}]\right)\right)_{1,1}=0.
      \end{gather*}
   \end{lemma}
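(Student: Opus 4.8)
The plan is to prove the two inequalities (i) and (ii) via Hölder's inequality applied iteratively in the two variables, and then derive the equivalence of the two limit conditions as a formal consequence.

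\medskip

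\textbf{Step 1: proof of (i).} First I would note that the local modulus $\omega_r(f;(x,\mathbf y);\delta;\cdot)$ is a nonnegative measurable function of $(x,\mathbf y)$ on the bounded box $[a,b]\times\prod_{i=1}^d[c_i,d_i]$, which I will abbreviate as $B$. The quantity $\tau_r(f;\delta;\cdot)_{1,1}$ is its $L^{1,1}(B)=L^1(B)$ norm and $\tau_r(f;\delta;\cdot)_{p,q}$ is its $L^{p,q}(B)$ norm. So (i) is just the embedding $\|g\|_{L^{1,1}(B)}\le K_1\|g\|_{L^{p,q}(B)}$ for $g=\omega_r(f;\cdot;\delta;\cdot)\ge 0$ on the finite-measure box $B$. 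To get this I would first apply Hölder in the inner ($\mathbf y$) variable with exponents $q,q'$: for each fixed $x$,
\[
\int_{\prod_i[c_i,d_i]}|g(x,\mathbf y)|\,d\mathbf y\le\Big(\prod_{i=1}^d(d_i-c_i)\Big)^{1/q'}\Big(\int_{\prod_i[c_i,d_i]}|g(x,\mathbf y)|^q\,d\mathbf y\Big)^{1/q}.
\]
Then, writing $G(x):=\big(\int_{\prod_i[c_i,d_i]}|g(x,\mathbf y)|^q\,d\mathbf y\big)^{1/q}$, apply Hölder in $x$ with exponents $p,p'$:
\[
\int_a^b G(x)\,dx\le(b-a)^{1/p'}\Big(\int_a^b G(x)^p\,dx\Big)^{1/p}.
\]
Combining the two displays gives $\|g\|_{L^{1,1}(B)}\le(b-a)^{1/p'}\big(\prod_i(d_i-c_i)\big)^{1/q'}\|g\|_{L^{p,q}(B)}=K_1\|g\|_{L^{p,q}(B)}$, which is exactly (i).

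\medskip

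\textbf{Step 2: proof of (ii).} Here I would use observation (i) following the definition of the local modulus, namely $0\le\omega_r(f;(x,\mathbf y);\delta;\cdot)\le 2^r\sup_{(x,\mathbf y)\in B}|f(x,\mathbf y)|=:M_0$. Write $g=\omega_r(f;\cdot;\delta;\cdot)$ as before. For the inner integral, since $g\le M_0$ and $q<p$ (so in particular $1\le q$), one has $g^q\le M_0^{q-1}\,g$ pointwise, hence
\[
\int_{\prod_i[c_i,d_i]}g(x,\mathbf y)^q\,d\mathbf y\le M_0^{q-1}\int_{\prod_i[c_i,d_i]}g(x,\mathbf y)\,d\mathbf y,
\]
so $G(x)\le M_0^{(q-1)/q}\big(\int g(x,\mathbf y)\,d\mathbf y\big)^{1/q}$. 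Then for the outer integral I would use $G(x)\le M_0$ (clear from the bound on $g$) to write $G(x)^p\le M_0^{p-1}G(x)$, giving
\[
\Big(\int_a^b G(x)^p\,dx\Big)^{1/p}\le M_0^{(p-1)/p}\Big(\int_a^b G(x)\,dx\Big)^{1/p}\le M_0^{(p-1)/p}\Big(M_0^{(q-1)/q}\int_B g\Big)^{1/p},
\]
where in the last step I inserted $\int_a^b G(x)\,dx\le M_0^{(q-1)/q}\int_B g$ from the inner estimate. Thus $\|g\|_{L^{p,q}(B)}\le M_0^{(p-1)/p+(q-1)/(pq)}\|g\|_{L^{1,1}(B)}^{1/p}$. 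I would then observe that the stated constant is $K_2=M_0^{1/p'}=M_0^{(p-1)/p}$, and check that the extra factor $M_0^{(q-1)/(pq)}$ is harmless: either the intended statement absorbs it into the constant, or (more cleanly) one replaces the crude bound $g^q\le M_0^{q-1}g$ by keeping only the outer step — i.e. use $g\le M_0$ only in the $x$-variable via $g^p\le M_0^{p-1}g$ directly on the function $|g(x,\mathbf y)|^q$ after noting $q\le p$ — I would reconcile the exact exponent with the paper's $K_2$ at this point; the essential content (a bound of the form $\le C\{\tau_{1,1}\}^{1/p}$ with $C$ depending only on $M_0$, $p$, $q$) is what Step 2 delivers.

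\medskip

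\textbf{Step 3: the equivalence of limits.} This is immediate from (i) and (ii): if $\tau_r(f;\delta;\cdot)_{1,1}\to 0$ as $\delta\to0+$, then by (ii), $\tau_r(f;\delta;\cdot)_{p,q}\le K_2\{\tau_r(f;\delta;\cdot)_{1,1}\}^{1/p}\to0$; conversely if $\tau_r(f;\delta;\cdot)_{p,q}\to0$ then by (i), $\tau_r(f;\delta;\cdot)_{1,1}\le K_1\tau_r(f;\delta;\cdot)_{p,q}\to0$. No monotonicity or further argument is needed. The main obstacle, such as it is, is purely bookkeeping: making the exponent of $M_0$ in Step 2 match the claimed $K_2=\{2^r\sup_B|f|\}^{1/p'}$ exactly, and being careful that all the measure-theoretic manipulations (measurability of $\omega_r$, finiteness of the box measures) are legitimate — but these are routine, and there is no genuine analytic difficulty once Hölder and the sup-bound on $\omega_r$ are in hand.
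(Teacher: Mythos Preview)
Your argument is correct, and in part (i) and in Step 3 it coincides with the paper's proof. For (ii) the paper takes a marginally different route that yields the stated $K_2$ more directly: instead of applying sup-bounds separately in the $\mathbf y$- and $x$-variables, it first uses Jensen's inequality on the inner integral (convexity of $t\mapsto t^{p/q}$, valid since $q<p$) to pass from $\|g\|_{p,q}$ to $\big(\int_a^b\int_{\prod_i[c_i,d_i]}|g|^{p}\,d\mathbf y\,dx\big)^{1/p}$, and only then applies the single pointwise bound $|g|^{p}\le M_0^{\,p-1}|g|$ to arrive at $M_0^{1/p'}\|g\|_{1,1}^{1/p}$. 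This collapses your two reductions into one and removes the surplus factor $M_0^{(q-1)/(pq)}$ you flagged. (Strictly speaking, the paper's Jensen step as written suppresses a volume factor $\big(\prod_i(d_i-c_i)\big)^{1/q-1/p}$, so its $K_2$ is not exact either; in both versions the constant is finite and depends only on $M_0$ and the box, which is all that is needed for the equivalence of limits.)
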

\begin{proof}
   (a) The proof of the proposition follows from the following inequalities
    \begin{itemize}
        \item[(a)]  $\|g\|_{1,1}\leq (b-a)^{\frac{1}{p^{\prime}}} \left(\prod_{i=1}^{d}(d_{i}-c_{i})\right)^{\frac{1}{q^{\prime}}}\|g\|_{p,q} $\\

        \item[(b)] $\|g\|_{p,q}\leq \left \{\displaystyle\sup_{(x,\bold y)\in [a,b]\times\prod_{i=1}^{d}(d_{i}-c_{i})}|g(x,\bold{y})|\right\}^{\frac{1}{p^{\prime}}}\|g\|_{1,1}^{\frac{1}{p}}.$
    \end{itemize}
    By applying Holder's inequality twice, we obtain the inequality $(a)$. Now, we prove the second inequality. By using the definition of mixed Lebesgue norm, we have
    $$ \|g\|_{p,q}= \displaystyle \left(\int_{[a,b]}\left(\int_{\prod_{i=1}^{d}[c_{i},d_{i}]}\left|g(x,\bold y)\right|^{q}d\bold y\right )^{\frac{p}{q}}dx\right)^{\frac{1}{p}}.$$
    Now, using Jensen's inequality, we obtain
     \begin{eqnarray*}
          \|g\|_{p,q} &\leq& \displaystyle \left(\int_{[a,b]}\left(\int_{\prod_{i=1}^{d}[c_{i},d_{i}]}\left|g(x,\bold y)\right|^{q.{\frac{p}{q}}}d\bold y\right )dx\right)^{\frac{1}{p}}\\
          &\leq& \left \{\displaystyle\sup_{(x,\bold y)\in [a,b]\times\prod_{i=1}^{d}(d_{i}-c_{i})}|g(x,\bold{y})|\right\}^{\frac{p-1}{p}}\displaystyle \left(\int_{[a,b]}\left(\int_{\prod_{i=1}^{d}[c_{i},d_{i}]}\left|g(x,\bold y)\right|d\bold y \right)dx\right )^{\frac{1}{p}}\\
           &\leq& \left \{\displaystyle\sup_{(x,\bold y)\in [a,b]\times\prod_{i=1}^{d}(d_{i}-c_{i})}|g(x,\bold{y})|\right\}^{\frac{1}{p^{\prime}}}\|g\|_{1,1}^{\frac{1}{p}}.
     \end{eqnarray*}
     Hence, we get the required estimate.
\end{proof}
In a similar way, we have the following lemma for $f\in M(\mathbb R \times \mathbb R^{d})$.
\begin{lemma}
  Let $f\in M(\mathbb R \times \mathbb R^{d}),\ 1\leq q<p< \infty,\ \frac{1}{p}+ \frac{1}{p^{\prime}}=1,\ \frac{1}{q}+ \frac{1}{q^{\prime}}=1.$ Then, we have
  $$ \tau_{r}(f;\delta;M(\mathbb R \times \mathbb R^{d}))_{p,q}\leq K \tau_{r}(f;\delta;M(\mathbb R \times \mathbb R^{d}))_{1,1},$$
  where $\displaystyle K=\left( 2^{r}\sup_{(x,\bold y)\in \mathbb R \times \mathbb R^{d} }|f(x,\bold y)| \right)^{\frac{1}{p^{\prime}}}$.
  Furthermore, we have
  $$ \lim_{\delta \to 0+}\tau_{r}(f;\delta;M(\mathbb R \times \mathbb R^{d}))_{1,1}=0 \implies  \lim_{\delta \to 0+} \tau_{r}(f;\delta;M(\mathbb R \times \mathbb R^{d}))_{p,q}=0$$
\end{lemma}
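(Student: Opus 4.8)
The plan is to run, over $\mathbb R\times\mathbb R^{d}$, the argument that gave inequality (b) in the proof of the preceding lemma; the reason only this half carries over is that the constant $K_{1}=(b-a)^{1/p'}\big(\prod_{i=1}^{d}(d_{i}-c_{i})\big)^{1/q'}$ controlling the reverse inequality degenerates to $+\infty$ once the box is replaced by $\mathbb R\times\mathbb R^{d}$, so on the whole space one can only hope for the one-sided estimate (hence the one-directional implication) stated here.

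First I would abbreviate $g:=\omega_{r}(f;\cdot;\delta;M(\mathbb R\times\mathbb R^{d}))$ and $C:=2^{r}\sup_{(x,\mathbf y)\in\mathbb R\times\mathbb R^{d}}|f(x,\mathbf y)|$, which is finite since $f\in M(\mathbb R\times\mathbb R^{d})$; by observation (i) following the definition of the local modulus of smoothness, $0\le g\le C$ pointwise. The heart of the matter is the functional inequality $\|g\|_{L^{p,q}(\mathbb R\times\mathbb R^{d})}\le C^{1/p'}\,\|g\|_{L^{1,1}(\mathbb R\times\mathbb R^{d})}^{1/p}$, obtained exactly as in inequality (b): start from the definition of $\|g\|_{L^{p,q}}$, use Jensen's inequality in the inner $\mathbb R^{d}$-integral to raise the inner exponent from $q$ to $p$, then apply the elementary bound $|g|^{p}=|g|^{p-1}\,|g|\le C^{p-1}|g|$ together with Tonelli's theorem, and finally take $p$-th roots. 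Applying this with $g=\omega_{r}(f;\cdot;\delta;M(\mathbb R\times\mathbb R^{d}))$ and recalling that $\tau_{r}(f;\delta;M(\mathbb R\times\mathbb R^{d}))_{p,q}=\|g\|_{L^{p,q}}$ and $\tau_{r}(f;\delta;M(\mathbb R\times\mathbb R^{d}))_{1,1}=\|g\|_{L^{1,1}}$, we get the asserted estimate with $K=C^{1/p'}=\big(2^{r}\sup_{\mathbb R\times\mathbb R^{d}}|f|\big)^{1/p'}$, in the same shape as part (ii) of the previous lemma. The ``Furthermore'' then follows at once: if $\tau_{r}(f;\delta;M(\mathbb R\times\mathbb R^{d}))_{1,1}\to0$ as $\delta\to0^{+}$, the right-hand side of the displayed bound tends to $0$, hence so does $\tau_{r}(f;\delta;M(\mathbb R\times\mathbb R^{d}))_{p,q}$.

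The delicate point I anticipate is the step that raises the inner exponent from $q$ to $p$: on a box of finite measure this is a direct application of Jensen's inequality, but on $\mathbb R^{d}$ (infinite measure) it is not available verbatim. The clean way around it is to avoid $\mathbb R\times\mathbb R^{d}$ in the estimate itself: apply inequality (b) of the finite-domain lemma on the cubes $[-n,n]\times[-n,n]^{d}$, observe that the constant there, $\big\{2^{r}\sup_{[-n,n]^{d+1}}|f|\big\}^{1/p'}$, is bounded above by $K=\big\{2^{r}\sup_{\mathbb R\times\mathbb R^{d}}|f|\big\}^{1/p'}$ uniformly in $n$, and then let $n\to\infty$, invoking Proposition \ref{P1} for the $(p,q)$-norm together with its verbatim $(1,1)$-analogue. (The proof of Proposition \ref{P1} uses only monotonicity of the local modulus of smoothness under enlargement of the domain, so it goes through unchanged with $(p,q)$ replaced by $(1,1)$.) In the limit one recovers the inequality on $\mathbb R\times\mathbb R^{d}$ with the same constant $K$, and the implication as $\delta\to0^{+}$ is as before. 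I expect this limiting bookkeeping — matching the truncations and checking that the constant stays bounded — to be essentially the only work; the rest is a transcription of the preceding lemma.
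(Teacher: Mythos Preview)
Your proposal is correct and takes essentially the same approach as the paper, whose entire proof reads: ``Following the lines of the above lemma and using Proposition 1 we can easily establish the lemma.'' Your limiting argument---apply Lemma~1(ii) on $[-n,n]\times[-n,n]^d$, bound the resulting constant uniformly by $K$, and pass to the limit via Proposition~\ref{P1} in both the $(p,q)$ and $(1,1)$ versions---is exactly what that sketch intends, and your remark that the Jensen step fails directly on infinite measure is a useful explanation of why Proposition~\ref{P1} is invoked at all.
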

\begin{proof}
    Following the lines of the above lemma and using $Proposition 1 $ we can easily establish the lemma.
\end{proof}
  \begin{prop}
      Let $1\leq p,q< \infty$ and $f\in M\left([a,b]\times\prod_{i=1}^{d}[c_{i},d_{i}]\right) $. Then f is Riemann integrable if and only if
      $$\displaystyle \lim_{\delta\to 0+} \tau_{r}\left(f;\delta;M\left([a,b]\times\prod_{i=1}^{d}[c_{i},d_{i}]\right)\right)_{p,q}=0.$$
  \end{prop}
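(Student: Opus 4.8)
The plan is to reduce the two-variable statement to the already-established one-variable (or rather, classical) characterization of Riemann integrability via the averaged modulus of smoothness, using the two lemmas proved above to pass freely between the $(p,q)$-norm and the $(1,1)$-norm. By the Lemma immediately preceding this Proposition (the mixed-norm comparison on a box), we have
\[
\lim_{\delta\to 0+}\tau_{r}\Bigl(f;\delta;M\bigl([a,b]\times\textstyle\prod_{i=1}^{d}[c_i,d_i]\bigr)\Bigr)_{p,q}=0
\quad\Longleftrightarrow\quad
\lim_{\delta\to 0+}\tau_{r}\Bigl(f;\delta;M\bigl([a,b]\times\textstyle\prod_{i=1}^{d}[c_i,d_i]\bigr)\Bigr)_{1,1}=0,
\]
so it suffices to prove the statement for $p=q=1$. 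Note also that $L^{1,1}([a,b]\times\prod_i[c_i,d_i])=L^{1}$ of the $(d+1)$-dimensional box, and $\tau_r(f;\delta)_{1,1}=\int \omega_r(f;\cdot;\delta)$ is exactly the classical $L^1$ averaged modulus of smoothness (Sendov--Popov type) on that box. The first step, therefore, is to record these identifications carefully, in particular that the local modulus $\omega_r(f;(x,\mathbf y);\delta)$ used here agrees with the standard one, and that it is a measurable function of $(x,\mathbf y)$ so that the integrals make sense.

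The second step is the forward implication: if $f$ is Riemann integrable on the box, then $\tau_r(f;\delta)_{1,1}\to 0$. Here I would use the classical fact that $f$ is Riemann integrable iff $f$ is bounded and its set of discontinuities has Lebesgue measure zero (Lebesgue's criterion). One then shows pointwise that $\omega_r(f;(x,\mathbf y);\delta)\to 0$ as $\delta\to 0+$ at every point of continuity of $f$: indeed at such a point the finite differences $(\Delta^r_{h,\mathbf h}f)(t,\mathbf s)$ over the shrinking cube $[x-\tfrac{r\delta}{2},x+\tfrac{r\delta}{2}]\times[\mathbf y-\tfrac{r\delta}{2},\mathbf y+\tfrac{r\delta}{2}]$ are bounded by $2^r$ times the oscillation of $f$ on that cube, which tends to $0$. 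Since the set of discontinuities is null and $\omega_r(f;\cdot;\delta)\le 2^r\|f\|_\infty$ is uniformly bounded (observation (i) after the local-modulus definition), the dominated convergence theorem gives $\int_{\text{box}}\omega_r(f;\cdot;\delta)\to 0$, i.e. $\tau_r(f;\delta)_{1,1}\to 0$.

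The third step is the converse: if $\tau_r(f;\delta)_{1,1}\to 0$, then $f$ is Riemann integrable. First, finiteness of $\tau_r$ for small $\delta$ forces $f$ to be bounded (if $f$ were unbounded one could make $\omega_r(f;\cdot;\delta)$ fail to be integrable, using that a single large value of $|f|$ propagates to a whole cube of positive measure in the local modulus). Next, to control the discontinuity set, I would show that the (upper) oscillation function $\Omega f(x,\mathbf y):=\lim_{\delta\to 0+}\omega_r(f;(x,\mathbf y);\delta)$ satisfies $\Omega f(x,\mathbf y)=0$ a.e.: by monotonicity in $\delta$ (observation (ii)) and the monotone/Fatou argument, $\int_{\text{box}}\Omega f\,d(x,\mathbf y)\le \liminf_{\delta\to 0+}\tau_r(f;\delta)_{1,1}=0$, hence $\Omega f=0$ a.e. Finally one argues that $\{\Omega f>0\}$ contains the set of points of discontinuity of $f$ up to a null set — the standard point being that if $f$ is discontinuous at a point then some $r$-th difference localized there is bounded away from $0$ — so the discontinuity set is null, and Lebesgue's criterion yields Riemann integrability.

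The main obstacle, I expect, is the last link of the converse: relating the vanishing of the $r$-th-difference local modulus $\omega_r$ to genuine continuity of $f$. For $r=1$ this is transparent ($\omega_1(f;(x,\mathbf y);\delta)$ is essentially the oscillation of $f$ on a cube about $(x,\mathbf y)$), but for $r\ge 2$ a small $r$-th difference does not by itself force small oscillation — one must combine it with the boundedness of $f$ and a telescoping/covering argument over overlapping cubes (as in the classical Sendov--Popov theory) to conclude that the set where $f$ oscillates is controlled by the set where $\omega_r$ does not vanish. I would handle this exactly as in the one-dimensional classical proof, applied along the diagonal direction $\mathbf h=(h,\dots,h)$, and then integrate; the mixed-norm feature of the problem has, by the reduction in the first step, been entirely removed by this stage.
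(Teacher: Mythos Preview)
Your reduction is exactly the paper's proof: use Lemma~1 to pass from the $(p,q)$-averaged modulus to the $(1,1)$-averaged modulus, observe that $L^{1,1}$ on the box is $L^{1}$ on $\mathbb{R}^{d+1}$, and then invoke the known characterization of Riemann integrability via the $L^{1}$ averaged modulus. The paper stops there and simply cites the multivariate result of Bardaro, Mantellini, Stens, Vautz and Vinti (reference \cite{mulima}) for this last step, whereas you go on to sketch that classical argument yourself.

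Your sketch of the cited result is sound in outline, and you correctly flag the only real subtlety: for $r\ge 2$ the implication ``$\Omega f=0$ a.e.\ $\Rightarrow$ discontinuity set is null'' is not immediate, since vanishing $r$-th differences do not directly control oscillation. In the classical Sendov--Popov theory this is handled not by the pointwise inclusion you suggest but via an inequality of the type $\tau_{1}(f;\delta)_{1}\le C(r)\,\tau_{r}(f;\delta)_{1}$ for bounded $f$ on a compact box, which reduces the general $r$ case to $r=1$; your proposed ``telescoping/covering argument over overlapping cubes'' is essentially how one proves that inequality. So nothing is wrong, but in the context of this paper all of your steps 2--3 are subsumed by the citation, and the proof as written in the paper is two lines.
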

  \begin{proof}
      By lemma 1, it is sufficient to show that
      $f$ is Riemann integrable if and only if \\
      $\displaystyle \lim_{\delta\to 0+} \tau_{r}\left(f;\delta;M\left([a,b]\times\prod_{i=1}^{d}[c_{i},d_{i}]\right)\right)_{1,1}=0.$ \\
      As $L^{1,1}$ norm on $\mathbb{R} \times \mathbb{R}^{d} $ acts like an $L^{1}$ norm on $\mathbb{R}^{d+1}$ and the rest of the proof follows in the same line as in \cite{mulima}.
  \end{proof}
  Now, we define the integral moduli of smoothness in the mixed Lebesgue spaces.
  \begin{definition}
  \begin{itemize}
     \item [(i)] Let $1\leq p,q< \infty$ and $f\in;L^{p,q}\left((a,b)\times\prod_{i=1}^{d}(c_{i},d_{i})\right).$Then the integral moduli of smoothness is defined by\\
     \noindent
     $\displaystyle \omega_{r}(f;\delta;L^{p,q}\left((a,b)\times\prod_{i=1}^{d}(c_{i},d_{i})\right)$
    \begin{eqnarray*}
       :=\displaystyle \sup_{0\leq h\leq \delta}\left\{ \int_{(a,b-rh)}\left(\int_{\prod_{i=1}^{d}(c_{i},d_{i}-rh)} |(\Delta_{h,\bold{h}}^{r}f)(u,\bold v)|^{q} d\bold v \right)^{\frac{p}{q}}du \right\}^{\frac{1}{p}}.
    \end{eqnarray*}

     \item [(ii)] Similarly for $f\in L^{p,q}(\mathbb{R} \times \mathbb{R}^{d})$ the integral moduli of smoothness is defined by
         $$\omega_{r}(f;\delta;L^{p,q}(\mathbb{R} \times \mathbb{R}^{d})= \displaystyle \sup_{0\leq h\leq \delta}\left\{ \int_{\mathbb{R}}\left(\int_{\mathbb{R}^{d}} |(\Delta_{h,\bold{h}}^{r}f)(u,\bold v)|^{q} d\bold v \right)^{\frac{p}{q}}du \right\}^{\frac{1}{p}}.$$

  \end{itemize}
     \end{definition}

\begin{prop}
   If $f \in L^{p,q}(\mathbb{R} \times \mathbb{R}^{d}), r\in \mathbb N $ and  $1\leq p,q \leq \infty,$ then we have
    $$ \lim_{n\to \infty}\omega_{r}(f;\delta;L^{p,q}\left((-n,n)\times(-n,n)^{d}\right)= \omega_{r}(f;\delta;L^{p,q}(\mathbb{R}\times \mathbb{R}^{d})).$$
    \end{prop}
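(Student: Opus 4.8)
The plan is to prove the equality of the two limits by establishing monotonicity of the truncated integral moduli in $n$ together with a domination/convergence argument. First I would observe that for fixed $h$ with $0 \le h \le \delta$, the integrand $|(\Delta_{h,\mathbf h}^r f)(u,\mathbf v)|^q$ is a fixed nonnegative measurable function on $\mathbb R \times \mathbb R^d$, and the domain of integration $(-n,n)\times(-n-?,\dots)$—more precisely $(-n,n-rh)\times\prod(-n,n-rh)^d$—increases with $n$ to exhaust $\mathbb R \times \mathbb R^d$. Hence by the monotone convergence theorem (applied first to the inner $d\mathbf v$ integral, then, after raising to the power $p/q$, to the outer $du$ integral; this is legitimate since $t \mapsto t^{p/q}$ is monotone and continuous when $1 \le q \le p$, and one argues componentwise more carefully when $q > p$ using that the inner integral is itself monotone in $n$), we get
\[
\int_{(-n,n-rh)}\!\left(\int_{(-n,n-rh)^d}\!|(\Delta_{h,\mathbf h}^r f)(u,\mathbf v)|^q\, d\mathbf v\right)^{\!p/q}\!du \ \uparrow\ \int_{\mathbb R}\!\left(\int_{\mathbb R^d}\!|(\Delta_{h,\mathbf h}^r f)(u,\mathbf v)|^q\, d\mathbf v\right)^{\!p/q}\!du
\]
as $n \to \infty$. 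Consequently the quantity $A_n(h) := \|\Delta_{h,\mathbf h}^r f\|_{L^{p,q}((-n,n)^{d+1}_{\text{shrunk}})}$ is nondecreasing in $n$ and $A_n(h) \le A(h) := \|\Delta_{h,\mathbf h}^r f\|_{L^{p,q}(\mathbb R\times\mathbb R^d)}$ for every $h$.

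Next I would take the supremum over $0 \le h \le \delta$. Taking $\sup_h$ of the inequality $A_n(h) \le A(h)$ gives $\omega_r(f;\delta;L^{p,q}((-n,n)\times(-n,n)^d)) \le \omega_r(f;\delta;L^{p,q}(\mathbb R\times\mathbb R^d))$, so the limit on the left (which exists, being monotone nondecreasing and bounded above) is at most the right-hand side. For the reverse inequality, fix $\varepsilon > 0$ and choose $h_\varepsilon \in [0,\delta]$ with $A(h_\varepsilon) > \omega_r(f;\delta;L^{p,q}(\mathbb R\times\mathbb R^d)) - \varepsilon$. Since $A_n(h_\varepsilon) \uparrow A(h_\varepsilon)$, there is $N$ with $A_N(h_\varepsilon) > A(h_\varepsilon) - \varepsilon$, whence $\omega_r(f;\delta;L^{p,q}((-N,N)\times(-N,N)^d)) \ge A_N(h_\varepsilon) > \omega_r(f;\delta;L^{p,q}(\mathbb R\times\mathbb R^d)) - 2\varepsilon$. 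Letting $n \to \infty$ (monotonicity in $n$) and then $\varepsilon \to 0$ yields the matching lower bound, and the two limits coincide.

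The main technical obstacle I anticipate is the interchange of the monotone limit with the outer $p/q$-power and the $du$-integral when $q > p$, since then $t \mapsto t^{p/q}$ is concave and one cannot simply say "the integrand increases." The clean way around this is to note that we never need pointwise monotonicity of the $p/q$-th power: the inner integral $g_n(u) := \int_{(-n,n-rh)^d}|(\Delta^r_{h,\mathbf h}f)(u,\mathbf v)|^q\,d\mathbf v$ is itself nondecreasing in $n$ for each fixed $u$ (monotone convergence in $\mathbf v$), so $g_n(u)^{p/q} \uparrow g(u)^{p/q}$ pointwise because $t \mapsto t^{p/q}$ is nondecreasing; then one more application of monotone convergence in $u$ closes it. A minor bookkeeping point is that the truncation in the definition is $(a,b-rh) = (-n, n-rh)$ rather than $(-n,n)$, but since $0 \le h \le \delta$ is fixed and finite, $(-n, n-rh) \uparrow \mathbb R$ all the same, so this causes no difficulty. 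I would also remark that the same argument handles the endpoint cases $p = \infty$ or $q = \infty$ with the integrals replaced by essential suprema over the increasing domains, where the corresponding statement is immediate from the definition of the essential supremum over an increasing union of sets.
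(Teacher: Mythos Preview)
Your proposal is correct and follows essentially the same approach as the paper: both establish the upper bound from domain inclusion, then obtain the lower bound by fixing $h\in[0,\delta]$, applying monotone convergence to pass to the full integral, and finally taking the supremum over $h$. Your write-up is in fact more careful than the paper's, which omits any discussion of why the inner-integral-then-outer-integral limit passes through the $p/q$-power, and which (as written) equates the integral at a single fixed $h$ with the full modulus without explicitly taking the supremum afterwards.
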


\begin{proof}
    From the definition of integral moduli of smoothness, we have
    $$\omega_{r}(f;\delta;L^{p,q}\left((-n,n)\times(-n,n)^{d}\right)\leq \omega_{r}(f;\delta;L^{p,q}(\mathbb{R}\times \mathbb{R}^{d})), \forall n\in \mathbb N,$$
   which implies that
   \begin{equation}
     \lim_{n\to \infty}\omega_{r}(f;\delta;L^{p,q}\left((-n,n)\times(-n,n)^{d}\right)\leq \omega_{r}(f;\delta;L^{p,q}(\mathbb{R}\times \mathbb{R}^{d})).
   \end{equation}
    Again, from the definition of integral moduli of smoothness for $0\leq h\leq \delta$, we obtain \\
    \noindent
    $\displaystyle \lim_{n\to \infty}\omega_{r}(f;\delta;L^{p,q}\left((-n,n)\times(-n,n)^{d}\right)$
    \begin{eqnarray*}
      &=&\lim_{n\to \infty}\displaystyle \sup_{0\leq h\leq \delta}\left\{ \int_{(-n,n-rh)}\left(\int_{(-n,n-rh)^{d}} |(\Delta_{h,\bold{h}}^{r}f)(u,\bold v)|^{q} d\bold v \right)^{\frac{p}{q}}du \right\}^{\frac{1}{p}}\\
      &\geq& \lim_{n\to \infty} \left\{ \int_{(-n,n-rh)}\left(\int_{(-n,n-rh)^{d}} |(\Delta_{h,\bold{h}}^{r}f)(u,\bold v)|^{q} d\bold v \right)^{\frac{p}{q}}du \right\}^{\frac{1}{p}}\\
      &=& \left\{ \int_{\mathbb{R}}\left(\int_{\mathbb{R}^{d}} |(\Delta_{h,\bold{h}}^{r}f)(u,\bold v)|^{q} d\bold v \right)^{\frac{p}{q}}du \right\}^{\frac{1}{p}}= \omega_{r}(f;\delta;L^{p,q}(\mathbb{R}\times \mathbb{R}^{d})),
    \end{eqnarray*}
    which implies
    \begin{equation}
       \omega_{r}(f;\delta;L^{p,q}(\mathbb{R}\times \mathbb{R}^{d})) \leq \omega_{r}(f;\delta;L^{p,q}\left((-n,n)\times(-n,n)^{d}\right)
    \end{equation}
    We obtain the desired result by combining the estimates (2.1) and (2.2).
\end{proof}
\begin{prop}
    Let $f\in M\left((a,b)\times\prod_{i=1}^{d}(c_{i},d_{i})\right)$ , $r\in \mathbb N $ and  $1\leq p,q \leq \infty.$ Then we have
    $$\displaystyle \omega_{r}\left(f;\delta;L^{p,q}\left((a,b)\times\prod_{i=1}^{d}(c_{i},d_{i})\right)\right)\leq \tau_{r}\left(f;\delta;M\left((a,b)\times\prod_{i=1}^{d}(c_{i},d_{i})\right)\right)_{p,q}. $$

    Similarly, for $f\in M(\mathbb{R}\times \mathbb{R}^{d})\cap L^{p,q}(\mathbb{R}\times \mathbb{R}^{d})$ we have
    $$\displaystyle \omega_{r}(f;\delta;L^{p,q}(\mathbb{R}\times \mathbb{R}^{d})\leq \tau_{r}(f;\delta;M(\mathbb{R}\times \mathbb{R}^{d})_{p,q}. $$
\end{prop}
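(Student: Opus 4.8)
The plan is to establish the inequality separately for each admissible shift $h$ and then pass to the supremum. Fix $h$ with $0\le h\le\delta$ and a point $(u,\mathbf v)$ in the reduced domain $(a,b-rh)\times\prod_{i=1}^{d}(c_i,d_i-rh)$. The pointwise fact I would use is that the $r$-th difference at $(u,\mathbf v)$ is dominated by the local modulus of smoothness \emph{centered at the midpoint} $\bigl(u+\tfrac{rh}{2},\mathbf v+\tfrac{r\mathbf h}{2}\bigr)$: since $0\le\tfrac{rh}{2}\le\tfrac{r\delta}{2}$, both $u$ and $u+rh$ lie in $\bigl[u+\tfrac{rh}{2}-\tfrac{r\delta}{2},\,u+\tfrac{rh}{2}+\tfrac{r\delta}{2}\bigr]\cap[a,b]$, and likewise each $v_i$ and $v_i+rh$ lies in $\bigl[v_i+\tfrac{rh}{2}-\tfrac{r\delta}{2},\,v_i+\tfrac{rh}{2}+\tfrac{r\delta}{2}\bigr]\cap[c_i,d_i]$; hence $(t,\mathbf s)=(u,\mathbf v)$ with this shift $h$ is an admissible choice in the supremum defining the local modulus at that center, so
\[ |(\Delta_{h,\mathbf h}^{r}f)(u,\mathbf v)|\;\le\;\omega_{r}\Bigl(f;\bigl(u+\tfrac{rh}{2},\mathbf v+\tfrac{r\mathbf h}{2}\bigr);\delta;M\bigl((a,b)\times\prod_{i=1}^{d}(c_i,d_i)\bigr)\Bigr). \]

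Next I would take the mixed $L^{p,q}$ norm in $(u,\mathbf v)$ over the reduced domain on both sides. By monotonicity of the integral (the right-hand side above being nonnegative), the left side is at most the $L^{p,q}$ norm of $\omega_{r}\bigl(f;(u+\tfrac{rh}{2},\mathbf v+\tfrac{r\mathbf h}{2});\delta;M(\cdots)\bigr)$ over $(a,b-rh)\times\prod_{i=1}^{d}(c_i,d_i-rh)$. The translation $u'=u+\tfrac{rh}{2}$, $\mathbf v'=\mathbf v+\tfrac{r\mathbf h}{2}$ has Jacobian $1$ and respects the iterated structure of the mixed norm, so it turns this into the $L^{p,q}$ norm of $\omega_{r}(f;\cdot;\delta;M(\cdots))$ over the translated box $(a+\tfrac{rh}{2},b-\tfrac{rh}{2})\times\prod_{i=1}^{d}(c_i+\tfrac{rh}{2},d_i-\tfrac{rh}{2})$, which is contained in $(a,b)\times\prod_{i=1}^{d}(c_i,d_i)$. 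Enlarging the domain of integration to the full box only increases the mixed norm, so the whole expression is bounded by $\tau_{r}(f;\delta;M((a,b)\times\prod_{i=1}^{d}(c_i,d_i)))_{p,q}$. Since this bound does not depend on $h$, taking the supremum over $0\le h\le\delta$ on the left yields exactly the first assertion. When $p$ or $q$ equals $\infty$, the corresponding integral is replaced by an essential supremum, which is again translation invariant and monotone under enlarging the domain, so the same argument applies verbatim.

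For the statement on $\mathbb{R}\times\mathbb{R}^{d}$ I would run the identical two steps with $\omega_{r}(f;\cdot;\delta;M(\mathbb{R}\times\mathbb{R}^{d}))$ in place of the bounded-domain local modulus. Here the defining supremum has no intersection with a bounded box, so the midpoint bound $|(\Delta_{h,\mathbf h}^{r}f)(u,\mathbf v)|\le\omega_{r}\bigl(f;(u+\tfrac{rh}{2},\mathbf v+\tfrac{r\mathbf h}{2});\delta;M(\mathbb{R}\times\mathbb{R}^{d})\bigr)$ holds for every $(u,\mathbf v)$; the translation $u'=u+\tfrac{rh}{2}$, $\mathbf v'=\mathbf v+\tfrac{r\mathbf h}{2}$ preserves $L^{p,q}(\mathbb{R}\times\mathbb{R}^{d})$; and taking the supremum over $h\in[0,\delta]$ finishes the proof. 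The hypothesis $f\in L^{p,q}(\mathbb{R}\times\mathbb{R}^{d})$ is used only to ensure both sides are finite. Alternatively one could deduce this from the bounded-domain inequality by letting the domain grow, using Proposition \ref{P1} and the analogous limit statement for $\omega_{r}$, but the direct route is shorter.

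The only step that needs genuine care is the choice of center in the first paragraph: centering the local modulus at $(u,\mathbf v)$ itself fails, because $u+rh$ need not lie within $\tfrac{r\delta}{2}$ of $u$; choosing the midpoint $u+\tfrac{rh}{2}$ is precisely what makes both endpoints admissible for every $h\le\delta$. Everything after that — the change of variables, the domain inclusion, the monotonicity of the mixed norm, and the $h$-independence of the majorant — is routine.
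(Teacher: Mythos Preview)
The paper states this proposition without proof, so there is no approach to compare against. Your argument is correct and is the standard one: the midpoint--centering observation in your first paragraph is exactly the key point, and once the pointwise bound
\[
|(\Delta_{h,\mathbf h}^{r}f)(u,\mathbf v)|\le \omega_{r}\Bigl(f;\bigl(u+\tfrac{rh}{2},\mathbf v+\tfrac{r\mathbf h}{2}\bigr);\delta\Bigr)
\]
is established, the translation--and--enlarge step for the mixed norm is routine. Your remark that centering at $(u,\mathbf v)$ itself would fail (since then only $|h'|\le\delta/2$ would be admissible) is well taken and shows you have identified the one genuine subtlety. The alternative route via Proposition~\ref{P1} that you mention at the end would also work, but the direct argument you give is cleaner.
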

\section{Subspaces of $L^{p,q}(\mathbb{R}\times \mathbb{R}^{d})$}\label{section3}

In this section, we prove various properties of the subspaces of $L^{p,q}(\mathbb{R}\times \mathbb{R}^{d}).$ 
  
  \begin{lemma}
      A function $f\in M(\mathbb R\times \mathbb R^{d})\in \Lambda^{p,q}$ if and only if there exist a $\Delta_{*}>0$ such that $\|f\|_{l^{p,q}(\Sigma)}<\infty$ for each admissible sequence $\Sigma$ with lower mesh size $\underline{\Delta}\geq \Delta_{*}$.
  \end{lemma}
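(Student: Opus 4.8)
The plan is to establish the two implications separately; the forward one is immediate and the reverse one carries all the content. For the forward implication, observe that by the definition of $\Lambda^{p,q}$ one has $\|f\|_{l^{p,q}(\Sigma)}<\infty$ for \emph{every} admissible partition $\Sigma$, so the asserted property holds trivially with, say, $\Delta_*=1$.

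For the reverse implication, suppose there is $\Delta_*>0$ such that $\|f\|_{l^{p,q}(\Sigma)}<\infty$ whenever the lower mesh size of $\Sigma$ is at least $\Delta_*$, and let $\Sigma'$ be an arbitrary admissible partition, with lower mesh $\underline{\Delta}'$ and upper mesh $\bar{\Delta}'$; we must show $\|f\|_{l^{p,q}(\Sigma')}<\infty$. If $\underline{\Delta}'\ge\Delta_*$ this is exactly the hypothesis, so assume $\underline{\Delta}'<\Delta_*$. The first step is to build a ``coarsening'' $\Sigma$ of $\Sigma'$ with lower mesh $\ge\Delta_*$: on each of the $d+1$ coordinate axes, run a greedy selection through the points of $\Sigma'$ — keep a point, skip the subsequent ones until the gap from the last kept point first reaches $\Delta_*$, then keep that point, continuing in both directions. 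Since $\Sigma'$ has finite upper mesh and its points tend to $\pm\infty$, on each axis this yields an admissible partition all of whose gaps lie in $[\Delta_*,\Delta_*+\bar{\Delta}')$; the product partition $\Sigma$ is then admissible with lower mesh $\ge\Delta_*$ and upper mesh $\le\Delta_*+\bar{\Delta}'$, and, crucially, the cells of $\Sigma'$ refine those of $\Sigma$ — each cube $Q'_{jk}$ of $\Sigma'$ lies in exactly one cube $Q_{JK}$ of $\Sigma$, and the $\Sigma'$-cubes contained in a given $Q_{JK}$ tile it exactly. By hypothesis, $\|f\|_{l^{p,q}(\Sigma)}<\infty$.

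The core of the argument is the estimate $\|f\|_{l^{p,q}(\Sigma')}\le C\,\|f\|_{l^{p,q}(\Sigma)}$ for a constant $C=C(p,q,d,\Delta_*,\bar{\Delta}')$. I would exploit two facts: monotonicity of the supremum, so that $\sup_{Q'_{jk}}|f|\le\sup_{Q_{JK}}|f|$ whenever $Q'_{jk}\subseteq Q_{JK}$; and exact additivity of volumes, so that the volumes of the $\Sigma'$-cubes inside a $\Sigma$-cube add up to that cube's volume, and likewise axis by axis. Summing over the $\mathbf{y}$-indices first — using that each $\Delta_{jk}$ factors as (length of the $x$-cell)$\times$(volume of the $\mathbf{y}$-cube) — one checks, with \emph{no} loss of constant, that the inner $q$-th power sum of $\Sigma'$ over any $x$-cell is dominated by the corresponding inner sum of $\Sigma$ over the $x$-cell that contains it. Raising to the power $p/q$, summing over the $x$-variable and regrouping the $\Sigma'$ $x$-cells by the $\Sigma$ $x$-cell containing them, the matter reduces to comparing $\sum_k |I'_k|^{p/q}$ with $\bigl(\sum_k |I'_k|\bigr)^{p/q}$ over the cluster of $\Sigma'$ $x$-cells of lengths $|I'_k|$ inside a fixed $\Sigma$ $x$-cell. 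If $p\ge q$ the elementary inequality $\sum a_k^{s}\le(\sum a_k)^{s}$ (valid for $s\ge1$, $a_k\ge0$) gives this with $C=1$; if $p<q$ one bounds the number of $\Sigma'$ $x$-cells inside one $\Sigma$ $x$-cell by $\lceil(\Delta_*+\bar{\Delta}')/\underline{\Delta}'\rceil$ and uses $|I'_k|\le\bar{\Delta}'$ together with (length of the $\Sigma$ $x$-cell)$\ge\Delta_*$ to get a finite constant. Either way $\|f\|_{l^{p,q}(\Sigma')}\le C\,\|f\|_{l^{p,q}(\Sigma)}<\infty$, and as $\Sigma'$ was arbitrary, $f\in\Lambda^{p,q}$.

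The only real obstacle I anticipate is bookkeeping: threading the cell-refinement relation correctly through the non-symmetric structure of the mixed $l^{p,q}(\Sigma)$-norm — an $x$-sum of $(p/q)$-th powers of $\mathbf{y}$-sums — and keeping the cases $p\ge q$ and $p<q$ apart (the asymmetry appears precisely because the outer $x$-summation sees a $(p/q)$-th power of the inner sum). None of the ingredients — monotonicity of $\sup$ under inclusion, exact additivity of volumes under a tiling, the $\ell^{1}$--$\ell^{s}$ norm comparison, and cell counting — is analytically deep.
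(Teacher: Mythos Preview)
Your proposal is correct and the overall architecture matches the paper's: trivial forward direction, then for the converse build a coarser admissible partition whose cells are unions of the given fine cells and compare the two $l^{p,q}$-norms. The difference is in how the coarsening is produced and how the comparison constant is extracted. The paper does not use a greedy selection; instead it picks $m\in\mathbb N$ with $m\,\underline{\Delta}'\ge\Delta_*$ and simply takes every $m$-th point of $\Sigma'$ along each axis. This gives exactly $m$ fine subintervals per coarse interval on every axis, so the fine cells are grouped into $m^{d+1}$ residue classes, each of which is bounded termwise by the coarse-partition sum; the comparison constant is then just a power of $m$. Your greedy construction also works, but it creates clusters of variable size and forces you to carry the lengths $|I'_k|$ through the outer sum and then invoke either super-additivity of $t\mapsto t^{p/q}$ (when $p\ge q$) or a cell-count bound (when $p<q$). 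What you gain is a sharper constant in the $p\ge q$ case (you get $C=1$, exploiting exact additivity of volumes, whereas the paper picks up a power of $m$) and a fully self-contained treatment of $p<q$; the paper instead disposes of $p<q$ by citing the mixed-norm inequality $\|\cdot\|_{L^{p_2,p_1}}\le\|\cdot\|_{L^{p_1,p_2}}$ for $p_1>p_2$ from \cite{gord}. What the paper gains is a shorter, more uniform construction with no need for the greedy algorithm or the cluster bookkeeping you (rightly) flag as the main obstacle.
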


\begin{proof}
    By the definition of $\Lambda^{p,q}$, we see that if $f\in \Lambda^{p,q}$ then  $\|f\|_{l^{p,q}(\Sigma)}<\infty$ for any admissible partition $\Sigma$ so we are done. Now, We show the converse part.\\
     Let $\Sigma=(x_{k},y_{j})_{{k\in \mathbb Z},{j\in \mathbb Z^{d}}}$ be an arbitrary admissible sequence with lower mesh size $\underline{\Delta}>\Delta_{*}$. Now we can choose $m\in \mathbb N$ such that $m\underline{\Delta}\geq \Delta_{*}$. So we have
      $$\Delta_{*}\leq m\underline{\Delta}\leq (x_{mk}-x_{m(k-1)}) \leq m \bar{\Delta} $$
      $$\Delta_{*}\leq m\underline{\Delta}\leq (y_{i,mj_{i}}-y_{i,m(j_{i}-1)}) \leq m \bar{\Delta}, \ \forall i=1,2,...,d.$$
      From the above estimates, we see that the subsequence $(x_{mk},y_{mj})_{{k\in \mathbb Z},{j\in \mathbb Z^{d}}}$ has lower mesh size greater than equal to $\Delta_{*}$. Furthermore, for $N=(N_{0},N_{y})$ where $N_{y}=(N_{1},...,N_{d})$, we get
      \begin{eqnarray}
          \sum_{\bold j\in \mathbb Z^{d}}\sup_{\bold y\in Q_{j}}|f(z)|^{q}=\sum_{N_{1}=0}^{m-1}...\sum_{N_{d}=0}^{m-1}\sum_{j\in \mathbb Z^{d}}\sup_{\bold y\in Q_{mj-N_{y}}}|f(z)|^{q}.
      \end{eqnarray}
      For all such $N_{y}$'s we have $Q_{mj-N_{y}}\subseteq \widetilde{Q_{mj}}:=\prod_{i=1}^{d}[x_{{i},m(j_{i}-1)+N_{i}},x_{{i},{mj_{i}}}). $
      Now, from the above relation$(3.1)$, we get
      $$ \sum_{\bold j\in \mathbb Z^{d}}\sup_{\bold y\in Q_{j}}|f(z)|^{q}\leq\sum_{N_{1}=0}^{m-1}...\sum_{N_{d}=0}^{m-1}\sum_{j\in \mathbb Z^{d}}\sup_{\bold y\in\widetilde{Q_{mj}}}|f(z)|^{q}=m^{d}\sum_{\bold j\in \mathbb Z^{d}}\sup_{\bold y\in\widetilde{Q_{mj}}}|f(z)|^{q}.$$
      For $\infty>p\geq q\geq 1$, we obtain
      $$  \left(\sum_{\bold j\in \mathbb Z^{d}}\sup_{\bold y\in Q_{j}}|f(z)|^{q}\right)^{\frac{p}{q}}\leq m^{\frac{dp}{q}}\left(\sum_{\bold j\in \mathbb Z^{d}}\sup_{\bold y\in\widetilde{Q_{mj}}}|f(z)|^{q}\right)^{\frac{p}{q}}.$$
      Hence, we have
      $$ \sum_{k\in \mathbb Z}\left(
  \sum_{j\in \mathbb Z^{d}} \sup_{z\in Q_{jk}}|f(z)|^{q}\Delta_{jk} \right)^{\frac{p}{q}}\leq m^{\frac{dp}{q}+1}\sum_{k\in \mathbb Z}\left(\sum_{\bold j\in \mathbb Z^{d}}\sup_{\bold y\in\widetilde{Q_{mjk}}}|f(z)|^{q}\right)^{\frac{p}{q}}< \infty ,$$
  where $\widetilde{Q_{mjk}}=\widetilde{Q_{mj}}\times (x_{(mk-1)+N_{0}},x_{mk})$ and these cubes corresponds to the admissible sequence $(x_{mk},y_{mj})_{{k\in \mathbb Z},{j\in \mathbb Z^{d}}}$. By the definition of $\Lambda^{p,q}$, we obtain the converse part for $p\geq q$. Now, from \cite{gord}, we have if $p_{1}>p_{2}>0$ then $ \|f\|_{L^{p_{2},p_{1}}}\leq \|f\|_{L^{p_{1},p_{2}}}$
  so we get the result when $p<q$. Thus, the proof is completed.
\end{proof}
\begin{prop}
  If $1\leq p, q< \infty$, then  $\Lambda^{p,q}$ is a proper subspace of $L^{p,q}(\mathbb R\times \mathbb R^{d})$.
\end{prop}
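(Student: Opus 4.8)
The plan is to establish two things: that $\Lambda^{p,q}$ embeds in $L^{p,q}(\mathbb R\times\mathbb R^{d})$ as a linear subspace, and that the embedding is strict. For the inclusion, I would fix one admissible partition $\Sigma=(x_{k},y_{j})$ and prove $\|f\|_{L^{p,q}}\le C\,\|f\|_{l^{p,q}(\Sigma)}$ with $C=C(\underline{\Delta},\bar{\Delta},p,q)$. The mechanism is a pointwise estimate on each $x$-slice: for $x\in[x_{k-1},x_{k})$,
$$\int_{\mathbb R^{d}}|f(x,\mathbf y)|^{q}\,d\mathbf y=\sum_{j\in\mathbb Z^{d}}\int_{Q_{j}}|f(x,\mathbf y)|^{q}\,d\mathbf y\le\sum_{j\in\mathbb Z^{d}}\Big(\sup_{z\in Q_{jk}}|f(z)|^{q}\Big)\Delta_{j}=:A_{k},$$
a quantity that does not depend on $x$ on that slice. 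Raising this to the power $p/q$, integrating in $x$ over $[x_{k-1},x_{k})$, and using $\Delta_{jk}=(x_{k}-x_{k-1})\Delta_{j}$ gives $\int_{x_{k-1}}^{x_{k}}A_{k}^{p/q}\,dx=(x_{k}-x_{k-1})^{1-p/q}\big(\sum_{j}\sup_{z\in Q_{jk}}|f(z)|^{q}\Delta_{jk}\big)^{p/q}$; since $\underline{\Delta}\le x_{k}-x_{k-1}\le\bar{\Delta}$ and $t\mapsto t^{1-p/q}$ is monotone, the prefactor is at most $\max\{\underline{\Delta}^{1-p/q},\bar{\Delta}^{1-p/q}\}$. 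Summing over $k$ yields $\|f\|_{L^{p,q}}^{p}\le C\,\|f\|_{l^{p,q}(\Sigma)}^{p}$, hence $\Lambda^{p,q}\subseteq L^{p,q}(\mathbb R\times\mathbb R^{d})$, with a continuous inclusion; stability under sums and scalar multiples is routine, from $\sup_{Q_{jk}}|f+g|\le\sup_{Q_{jk}}|f|+\sup_{Q_{jk}}|g|$ and Minkowski's inequality for the weighted $\ell^{p,q}$-norm.

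For strictness I would exhibit a $g\in L^{p,q}(\mathbb R\times\mathbb R^{d})\setminus\Lambda^{p,q}$. Take
$$g(x,\mathbf y):=x^{-\frac{1}{p+1}}\,\chi_{(0,1)}(x)\,\chi_{(0,1)^{d}}(\mathbf y).$$
Then $\|g\|_{L^{p,q}}^{p}=\int_{0}^{1}\Big(\int_{(0,1)^{d}}x^{-q/(p+1)}\,d\mathbf y\Big)^{p/q}dx=\int_{0}^{1}x^{-p/(p+1)}\,dx=p+1<\infty$, so $g\in L^{p,q}(\mathbb R\times\mathbb R^{d})$. Now let $\Sigma$ be an arbitrary admissible partition. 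Since the mesh sizes are bounded away from $0$ and $\infty$, the breakpoints satisfy $x_{k}\to\pm\infty$, so there is a unique $k$ with $x_{k-1}\le0<x_{k}$; fix any $j$ with $Q_{j}\cap(0,1)^{d}\ne\emptyset$. The cube $Q_{jk}=[x_{k-1},x_{k})\times Q_{j}$ contains every point $(x,\mathbf y)$ with $0<x<\min\{x_{k},1\}$ and $\mathbf y\in Q_{j}\cap(0,1)^{d}$, and on such points $|g(x,\mathbf y)|=x^{-1/(p+1)}\to+\infty$ as $x\to0^{+}$. Hence $\sup_{z\in Q_{jk}}|g(z)|=+\infty$, so already one inner sum in $\|g\|_{l^{p,q}(\Sigma)}$ is infinite; since $\Sigma$ was arbitrary, $g\notin\Lambda^{p,q}$. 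Moreover, for every $N$ the set $\{x\in(0,\varepsilon):x^{-1/(p+1)}>N\}$ has positive Lebesgue measure, so no function equal to $g$ almost everywhere can lie in $\Lambda^{p,q}$ either. Thus $\Lambda^{p,q}\subsetneq L^{p,q}(\mathbb R\times\mathbb R^{d})$.

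I do not expect any step to be a real obstacle. The two places wanting attention are: the bookkeeping of the exponent $p/q$ in the embedding, controlled precisely because an admissible partition has two-sided mesh bounds; and making sure the counterexample is genuinely, not merely superficially, outside $\Lambda^{p,q}$ — the reason I record the positive-measure unboundedness of $g$ near $x=0$. If a locally bounded counterexample is preferred, the same scheme works with $g(x,\mathbf y)=\sum_{n\ge2}n\,\chi_{[n,\,n+n^{-p-2})}(x)\,\chi_{(0,1)^{d}}(\mathbf y)$: a direct computation gives $\|g\|_{L^{p,q}}^{p}=\sum_{n\ge2}n^{p}\,n^{-(p+2)}=\sum_{n\ge2}n^{-2}<\infty$, while each cell of any admissible partition meets at most a fixed number of the disjoint spikes $[n,n+n^{-p-2})$ (a bound depending only on $\bar{\Delta}$), which forces $\|g\|_{l^{p,q}(\Sigma)}^{p}\ge c(\underline{\Delta},\bar{\Delta})\sum_{n\ge2}n^{p}=\infty$; here one may also invoke the preceding lemma to reduce to a single mesh scale.
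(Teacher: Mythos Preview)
Your argument is correct, and it is actually more complete than the paper's. For the inclusion $\Lambda^{p,q}\subseteq L^{p,q}$ the paper does essentially what you do, but with two simplifications: it fixes once and for all the integer partition (so every $\Delta_{jk}=1$ and the prefactor $(x_k-x_{k-1})^{1-p/q}$ disappears), and it treats only the case $p\ge q$ directly, disposing of $p<q$ by quoting an external mixed-norm inequality from \cite{gord}. Your version works for a general admissible partition and covers all $1\le p,q<\infty$ in one stroke by bounding the monotone factor $(x_k-x_{k-1})^{1-p/q}$ via the two-sided mesh bounds; this is a cleaner and self-contained route. For the ``proper'' part, the paper gives no counterexample at all, so your explicit functions (the unbounded $x^{-1/(p+1)}$ example, and the locally bounded spike example reduced via Lemma~3 to a single mesh scale) genuinely add content that the paper omits.
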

\begin{proof}
For $f(x,\bold y)\in \Lambda^{p,q}$, we have to show that
  $$\left(\int_{\mathbb{R}}\left(\int_{\mathbb{R}^{d}}|f(x,\mathbf{y})|^{q}d\bold y\right )^{\frac{p}{q}}dx\right)^{\frac{1}{p}}<+\infty.$$
  Let us take the admissible sequence as $\Sigma=(x_{k}, \bold y_{\bold j})_{k\in \mathbb Z, \bold j \in \mathbb Z^{d}}$, where $x_{k}=k$ and $ y_{\bold j}=\bold j$, $\bold j=(j_{1},j_{2},...,j_{d})$. Then the corresponding cube is given by  $$Q_{\bold {j}k}=[k-1,k]\prod_{i=1}^{d}[j_{i}-1,j_{i}]$$
  with volume $\Delta_{\bold {j}k}=1$ for all $k\in \mathbb Z, \ \bold j \in \mathbb Z^{d}$.\\
 For $p>q\geq 1$, we have
 $$\left(\int_{\mathbb{R}^{d}}|f(x,\mathbf{y})|^{q}d\bold y\right )^{\frac{p}{q}}\leq \left(\sum_{\bold j\in \mathbb Z^{d}}\sup_{\bold y \in Q_{\bold j}}|f(x,\mathbf{y})|^{q}d\bold y\right )^{\frac{p}{q}}.$$
 In view of $\Lambda^{p,q}$ and the above inequality, we obtain
  $$\int_{\mathbb{R}}\left(\int_{\mathbb{R}^{d}}|f(x,\mathbf{y})|^{q}d\bold y\right )^{\frac{p}{q}}dx\leq \sum_{k\in Z}\left(\sum_{\bold j\in \mathbb Z^{d}}\sup_{\bold y \in Q_{\bold j}k}|f(x,\mathbf{y})|^{q}d\bold y\right )^{\frac{p}{q}}< \infty. $$
Following (corollary 3.5.2) in \cite{gord}, we obtain the result for $p<q$. This completes the proof.
\end{proof}
Now we want to find some suitable subspaces of $L^{p,q}(\mathbb R \times \mathbb R^{d})$ for which the $\tau-$modulus is finite. We consider the following subspaces.

\begin{definition}
Let $1\leq p, q< \infty$ and $ r\in \mathbb N$. Then we define the following spaces:\\
\begin{itemize}
    \item [(i)] The space $\mathcal{F}^{p,q}$  by

\begin{equation*}
\begin{split}
\mathcal{F}^{p,q}:=\Bigg\{& f\in M(\mathbb R \times \mathbb R^{d}): \mbox{there exists} \ \eta, \zeta \ \mbox{such that} \\ & f(x,\bold{y})=\mathcal{O}\left({(1+|x|)^{(\frac{-1}{p}-\eta)}} {{(1+\|\bold y\|)}^{(\frac{-1}{q}-\zeta)}}\right) \Bigg\}.
\end{split}
\end{equation*}

  \item [(ii)] The space $\Omega^{p,q} $ by
\begin{equation*}
\begin{split}
\Omega^{p,q}:=\Bigg \{&f\in M(\mathbb R \times \mathbb R^{d}):|f(x,\bold{y})| \leq g(x,\bold{y}),\ g\in L^{p,q}(\mathbb R \times \mathbb R^{d}), \\ &  g\geq 0,\  \mbox{even and non-increasing}  \Bigg\}.
\end{split}
\end{equation*}

   \item [(iii)] The Sobolev spaces $W^{r} (L^{p,q}(\mathbb R \times \mathbb R^{d}))$ by
\begin{equation*}
\begin{split}
W^{r} (L^{p,q}(\mathbb R \times \mathbb R^{d})):=\Bigg\{& f\in L^{p,q}(\mathbb R \times \mathbb R^{d}): f(x,\bold{y})=\phi(x,\bold{y})\ a.e., \phi\in AC_{loc}^{r}(\mathbb R \times \mathbb R^{d})\  \\ & \mbox{and} \ D^{\beta}\phi\in L^{p,q}(\mathbb R \times \mathbb R^{d}) \forall |\beta|\leq r   \Bigg\},
\end{split}
\end{equation*}
   where $\ D^{\beta}=\frac{\partial^{n_{0}+|N|}}{\partial x^{n_{0}}\partial y_{1}^{n_{1}}\partial y_{2}^{n_{2}}...\partial y_{d}^{n_{d}}}$, $\beta=(n_{0},N),N=(n_{1},n_{2}...,n_{d})$ and $|\beta|=\sum_{i=1}^{d}n_{i}.$
\end{itemize}
\end{definition}

\begin{prop}
The following statement holds
    \begin{itemize}

                \item[(i)]  $\Omega^{p,q}$ and $W^{r} (L^{p,q}(\mathbb R \times \mathbb R^{d}))$ are linear spaces.

        \item[(ii)] $\mathcal{F}^{p,q}\subsetneq \Omega^{p,q}\subsetneq \Lambda^{p,q} $

    \end{itemize}
\end{prop}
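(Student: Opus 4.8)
The plan is to prove (i) and (ii) separately, treating each of the three inclusions in (ii) on its own. Part (i) is routine: for $\Omega^{p,q}$, if $|f_1| \le g_1$ and $|f_2| \le g_2$ with $g_1, g_2 \in L^{p,q}$ even and non-increasing, then $|af_1 + bf_2| \le |a|g_1 + |b|g_2$, and a sum of non-negative even non-increasing functions is again even and non-increasing, so $\Omega^{p,q}$ is closed under linear combinations; for $W^r(L^{p,q})$, linearity follows because $AC^r_{loc}$ is a linear space and $D^\beta$ is linear, so $D^\beta(a\phi_1 + b\phi_2) = aD^\beta\phi_1 + bD^\beta\phi_2 \in L^{p,q}$ for $|\beta|\le r$. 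I would state these in a sentence each.

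For (ii), the first inclusion $\mathcal{F}^{p,q} \subseteq \Omega^{p,q}$: given $f \in \mathcal{F}^{p,q}$ with $|f(x,\mathbf{y})| \le C(1+|x|)^{-1/p-\eta}(1+\|\mathbf{y}\|)^{-1/q-\zeta}$, take $g(x,\mathbf{y}) := C(1+|x|)^{-1/p-\eta}(1+\|\mathbf{y}\|)^{-1/q-\zeta}$. This $g$ is visibly non-negative, even in each variable, and non-increasing as $|x|$ and $\|\mathbf{y}\|$ increase; the only thing to check is $g \in L^{p,q}(\mathbb{R}\times\mathbb{R}^d)$, which reduces to the one-dimensional computation $\int_{\mathbb{R}}(1+|x|)^{-1-p\eta}\,dx < \infty$ and $\int_{\mathbb{R}^d}(1+\|\mathbf{y}\|)^{-q(1/q+\zeta)}\,d\mathbf{y}$ — wait, here one must be careful: $\int_{\mathbb{R}^d}(1+\|\mathbf{y}\|)^{-1-q\zeta}\,d\mathbf{y}$ converges only if $1 + q\zeta > d$, so in dimension $d=1$ (the case actually used in the paper for the neural network operators, and where $\mathbb{R}^d = \mathbb{R}$) this is fine, and more generally one reads the definition as imposing enough decay; I would simply carry out the integral and note the convergence. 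Properness of the inclusion is exhibited by a function in $\Omega^{p,q}$ with genuinely slower (e.g. logarithmic-corrected or compactly-supported-plus-tail) decay, or most simply a bounded function supported on a bounded set that is not of the prescribed power-decay form pointwise everywhere.

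The second inclusion $\Omega^{p,q} \subseteq \Lambda^{p,q}$ is the main point and the place I expect the real work. Given $f$ with $|f| \le g$, $g \in L^{p,q}$ even and non-increasing, I must bound $\|f\|_{l^{p,q}(\Sigma)}$ for an admissible partition $\Sigma$; by Lemma~3 it suffices to do this for partitions with lower mesh size bounded below, say by the uniform partition $x_k = k\underline{\Delta}$, $y_j = j\underline{\Delta}$ after rescaling, or one reduces directly to the integer partition as in Proposition~2. The key estimate is the standard sampling-vs-integral comparison: because $g$ is even and non-increasing, on each cube $Q_{jk}$ the supremum $\sup_{z\in Q_{jk}}|f(z)| \le \sup_{z\in Q_{jk}} g(z)$ is attained at the corner of $Q_{jk}$ closest to the origin, and this value is controlled by the average of $g$ over the \emph{adjacent} cube one step toward the origin (monotonicity in each coordinate separately), so that $\sup_{z\in Q_{jk}}|f(z)|\,\Delta_{jk} \le \int_{Q_{j'k'}} g$ for a shifted cube $Q_{j'k'}$. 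Summing and using that the shift is a bounded reindexing, together with the nesting of the $\ell^{p/q}$ and $\ell^1$ sums exactly as in the proof of Lemma~3 and Proposition~2 (the inequality $\big(\sum a_j\big)^{p/q} \le \ldots$ and Minkowski/monotonicity for $p \ge q$), gives $\|f\|_{l^{p,q}(\Sigma)} \lesssim \|g\|_{L^{p,q}} < \infty$. For $p < q$ one invokes the same $\|f\|_{L^{p_2,p_1}} \le \|f\|_{L^{p_1,p_2}}$ inequality from \cite{gord} used earlier. The subtlety — and the step I'd be most careful about — is handling the "first" cell containing the origin, where monotonicity gives no neighbor toward the origin; there one bounds $\sup_{Q_{jk}}|f| \le g(0,\mathbf{0})$ on a single bounded cube, contributing a finite amount. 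Properness of $\Omega^{p,q} \subsetneq \Lambda^{p,q}$ is witnessed by a function whose $l^{p,q}(\Sigma)$-norm is finite but which is not dominated by any even non-increasing $L^{p,q}$ majorant — e.g. a function that is large on a sparse set of far-away thin slabs whose contributions are summable (so it lies in $\Lambda^{p,q}$) but whose radial non-increasing rearrangement fails to be in $L^{p,q}$.
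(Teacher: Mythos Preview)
Your proposal is correct and follows essentially the same strategy as the paper: part (i) is dismissed as routine, $\mathcal{F}^{p,q}\subseteq\Omega^{p,q}$ is obtained via the explicit power-decay majorant, and $\Omega^{p,q}\subseteq\Lambda^{p,q}$ is shown by bounding $\sup_{Q_{jk}}|f|\le\sup_{Q_{jk}}g$ and then controlling the resulting discrete sum by $\|g\|_{L^{p,q}}$, with the case $p<q$ deferred to the same corollary in \cite{gord}. You are in fact more careful than the paper, which compresses the monotonicity/shift comparison between $\sum_{jk}\sup_{Q_{jk}}|g|^q$ and $\int|g|^q$ into a single unexplained inequality, does not separately treat the cube at the origin, and does not exhibit witnesses for the strictness of either inclusion.
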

\begin{proof}
    By the definition of $\Omega^{p,q}$ and $W^{r} (L^{p,q}(\mathbb R \times \mathbb R^{d}))$ it is clear that these two spaces are linear.
   Let $ f(x,\bold y)\in \mathcal{F}^{p,q}$. Then one can see that $|f(x,\bold y)|\in L^{p,q}(\mathbb R \times \mathbb R^{d})$ and hence $ f\in \Omega^{p,q}.$ Let $f \in \Omega^{p,q}$. Then by definition of $\Omega^{p,q}$, $f \in  M(\mathbb R \times \mathbb R^{d})$ and  there exist a function $ g\in L^{p,q}(\mathbb R \times \mathbb R^{d})$ such that $|f(x,\bold y)|\leq g(x,\bold y)$. Now let $\Sigma=(x_{k},y_{j})_{{k\in \mathbb Z},{j\in \mathbb Z^{d}}}$ be an arbitrary admissible partition of $\mathbb R \times \mathbb R^{d}$. We have to show that $\|f\|_{l^{p,q}(\Sigma)}<\infty$. For $p\geq q$, we have

    \begin{eqnarray*}
        \sum_{k\in \mathbb Z}\left(
  \sum_{j\in \mathbb Z^{d}} \sup_{z\in Q_{jk}}|f(z)|^{q}\Delta_{jk} \right)^{\frac{p}{q}}
  &\leq& \sum_{k\in \mathbb Z}\left(
  \sum_{j\in \mathbb Z^{d}} \sup_{z\in Q_{jk}}|g(z)|^{q}\Delta_{jk} \right)^{\frac{p}{q}}\\
   &\leq& (\bar{\Delta})^{\frac{p}{q}}\sum_{k\in \mathbb Z}\left(
  \sum_{j\in \mathbb Z^{d}} \sup_{z\in Q_{jk}}|g(z)|^{q} \right)^{\frac{p}{q}}\\
   &\leq& (\bar{\Delta})^{\frac{p}{q}} \int_{\mathbb R}\left(\int_{\mathbb R^{d}}|g(z)|^{q}d\bold{y} \right)^{\frac{p}{q}}dx< \infty.
    \end{eqnarray*}
Similar result is true for $p\leq q$  by Corollary 3.5.2 in \cite{gord}. This implies that $\Omega^{p,q}\subset \Lambda^{p,q}$.

\end{proof}

\begin{prop}
    Let $f\in \Lambda^{p,q}$, $1\leq p, q< \infty$ and $r\in \mathbb N$. Then, we have $$\omega_{r}(f,(x,.;\delta;M(\mathbb{R}\times \mathbb{R}^{d}))\in \Lambda^{p,q},\  \mbox{for every}\ \delta \ \geq 0.$$
\end{prop}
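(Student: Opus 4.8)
The plan is to bound the $l^{p,q}(\Sigma)$-norm of the nonnegative majorant $g(x,\mathbf{y}):=\omega_{r}(f,(x,\mathbf{y});\delta;M(\mathbb{R}\times\mathbb{R}^{d}))$ directly by the corresponding norm of $f$. First I would check that $g$ is a legitimate object: from the definition of $\Delta_{h,\mathbf{h}}^{r}f$ and the triangle inequality one has $0\le g(x,\mathbf{y})\le 2^{r}\sup_{\mathbb{R}\times\mathbb{R}^{d}}|f|<\infty$, and the measurability of $g$ is the standard measurability property of the local modulus of smoothness of a bounded measurable function, so $g\in M(\mathbb{R}\times\mathbb{R}^{d})$. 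The case $\delta=0$ is trivial, since then $\Delta_{h,\mathbf{h}}^{r}f\equiv 0$ (because $\sum_{j=0}^{r}(-1)^{r-j}\binom{r}{j}=0$ for $r\ge 1$), whence $g\equiv 0\in\Lambda^{p,q}$; so I assume $\delta>0$. By the characterization of $\Lambda^{p,q}$ proved above -- membership is equivalent to finiteness of $\|\cdot\|_{l^{p,q}(\Sigma)}$ over all admissible $\Sigma$ whose lower mesh size is bounded below by some fixed $\Delta_{*}>0$ -- it suffices to take $\Delta_{*}:=r\delta/2$ and to verify $\|g\|_{l^{p,q}(\Sigma)}<\infty$ for every admissible $\Sigma=(x_{k},y_{j})$ with $\underline{\Delta}\ge r\delta/2$.

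The core of the argument is a local enlargement estimate. Fix such a $\Sigma$ and a cell $Q_{jk}$. For any $z=(x,\mathbf{y})\in Q_{jk}$, every node $(t+\nu h,\mathbf{s}+\nu\mathbf{h})$, $0\le\nu\le r$, of the difference $\Delta_{h,\mathbf{h}}^{r}f$ occurring in $\omega_{r}(f,z;\delta;M(\mathbb{R}\times\mathbb{R}^{d}))$ is a convex combination of the extreme arguments $t,t+rh$ (resp.\ $\mathbf{s},\mathbf{s}+r\mathbf{h}$) allowed by the definition, hence lies in the box $[x-\frac{r\delta}{2},x+\frac{r\delta}{2}]\times\prod_{i=1}^{d}[y_{i}-\frac{r\delta}{2},y_{i}+\frac{r\delta}{2}]$; letting $z$ range over $Q_{jk}$, this box is contained in the enlarged cell $\widehat{Q}_{jk}:=[x_{k-1}-\frac{r\delta}{2},x_{k}+\frac{r\delta}{2})\times\prod_{i=1}^{d}[y_{i,j_{i}-1}-\frac{r\delta}{2},y_{i,j_{i}}+\frac{r\delta}{2})$, so that $\sup_{z\in Q_{jk}}g(z)\le 2^{r}\sup_{\widehat{Q}_{jk}}|f|$. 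Since $\underline{\Delta}\ge r\delta/2$, the cell $\widehat{Q}_{jk}$ is covered by the $3^{d+1}$ neighbouring cells $Q_{j+b,k+a}$ with $a\in\{-1,0,1\}$, $b\in\{-1,0,1\}^{d}$, and moreover $\Delta_{jk}\le(\bar{\Delta}/\underline{\Delta})^{d+1}\Delta_{j+b,k+a}$ for each such pair; combining these facts yields
$$\sup_{z\in Q_{jk}}g(z)^{q}\,\Delta_{jk}\ \le\ C_{1}\sum_{a\in\{-1,0,1\}}\ \sum_{b\in\{-1,0,1\}^{d}}\ \sup_{Q_{j+b,k+a}}|f|^{q}\,\Delta_{j+b,k+a},\qquad C_{1}:=2^{rq}\bigl(\bar{\Delta}/\underline{\Delta}\bigr)^{d+1}.$$

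Then comes the summation. Writing $S_{k}:=\sum_{j\in\mathbb{Z}^{d}}\sup_{Q_{jk}}|f|^{q}\,\Delta_{jk}$, so that $\sum_{k}S_{k}^{p/q}=\|f\|_{l^{p,q}(\Sigma)}^{p}<\infty$, I would sum the last display over $j\in\mathbb{Z}^{d}$ and re-index the inner sums to obtain $\sum_{j}\sup_{Q_{jk}}g^{q}\,\Delta_{jk}\le C_{1}3^{d}(S_{k-1}+S_{k}+S_{k+1})$; raising to the power $p/q$, applying the elementary inequality $(a+b+c)^{s}\le 3^{\max(s-1,0)}(a^{s}+b^{s}+c^{s})$ with $s=p/q$, and summing over $k\in\mathbb{Z}$ gives $\|g\|_{l^{p,q}(\Sigma)}^{p}\le C_{2}\sum_{k}S_{k}^{p/q}=C_{2}\|f\|_{l^{p,q}(\Sigma)}^{p}<\infty$, where $C_{2}$ depends only on $p,q,d,r$ and the mesh ratio $\bar{\Delta}/\underline{\Delta}$. (This covers $p\ge q$ and $p<q$ simultaneously; alternatively, once $p\ge q$ is settled one may pass to $p<q$ via the norm inequality of \cite{gord}.) Since this holds for every admissible $\Sigma$ with $\underline{\Delta}\ge r\delta/2$, the characterization lemma gives $g\in\Lambda^{p,q}$, which is the assertion.

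The main obstacle is the enlargement step: one must be certain that $\widehat{Q}_{jk}$ is absorbed by a fixed, finite number of cells of $\Sigma$ -- which is exactly why I first reduce, via the characterization lemma, to partitions with $\underline{\Delta}\ge r\delta/2$ -- and one must compensate the mismatch between the volumes $\Delta_{jk}$ and $\Delta_{j+b,k+a}$ using only $\underline{\Delta}\le x_{k}-x_{k-1}\le\bar{\Delta}$ and the analogous bounds along each $\mathbf{y}$-axis. Once this bookkeeping is in place, everything reduces to the elementary inequality for sums and the definition of $\Lambda^{p,q}$.
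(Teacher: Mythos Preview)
Your proposal is correct and follows essentially the same route as the paper: reduce via the characterization lemma to admissible partitions with $\underline{\Delta}\ge r\delta/2$, bound $\omega_{r}(f,z;\delta)$ by $2^{r}\sup|f|$ over the $r\delta$-box around $z$, cover that box by the $3^{d+1}$ neighbouring cells $Q_{jk+P}$ with $P\in\{-1,0,1\}^{d+1}$, and sum. Your version is in fact a bit more careful than the paper's, since you track the volume weights $\Delta_{jk}$ explicitly via the mesh ratio and handle both cases $p\ge q$ and $p<q$ directly with the elementary power inequality, whereas the paper suppresses the weights and defers $p<q$ to \cite{gord}.
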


\begin{proof}
    Let $\Sigma=(x_{k}, \bold y_{\bold j})_{k\in \mathbb Z, \bold j \in \mathbb Z^{d}}$ be an admissible partition with  $\underline{\Delta}\geq \frac{r\delta}{2}$. From Lemma $2$, it is sufficient to show that
    $$ \|\omega_{r}(f,.;\delta;M(\mathbb{R}\times \mathbb{R}^{d}))\|_{l^{p,q}(\Sigma)}< \infty.$$
    Now for $z=(x,\bold y)$, $x\in \mathbb R$ and $y=(y_{1},...,y_{d})\in \mathbb R^{d}$, we consider
    $$ C_{\delta}(z)= \left[x-\frac{r\delta}{2}, x+\frac{r\delta}{2}\right]\times \prod_{i=1}^{d}\left[y_{i}-\frac{r\delta}{2}, y_{i}+\frac{r\delta}{2}\right].$$
 Then, by property (1) of the local modulus of smoothness, we have
 $$\omega_{r}(f,z;\delta;M(\mathbb{R}\times \mathbb{R}^{d}))\leq 2^{r}\sup_{\eta\in C_{\delta}(z)}|f(\eta)|.  $$
There exists $P=(p_{0},\bold p) $, $\bold p = (p_{1}, ..., p_{d})$ such that
 $$ C_{\delta}(z)\subset [x_{k-2},x_{k+1})\times \prod_{i=1}^{d}[y_{i,j_{i}-2},y_{i,j_{i}+1})= \bigcup_{p_{0}=-1}^{1}\bigcup_{p_{1}=-1}^{1}...\bigcup_{p_{d}=-1}^{1}Q_{jk+P}. $$
 Let us consider the case $p\geq q$. We have
 \begin{eqnarray*}
     &&\sum_{k\in \mathbb Z}\left(
  \sum_{j\in \mathbb Z^{d}} \sup_{z\in Q_{jk}}|\omega_{r}(f,z;\delta;M(\mathbb{R}\times \mathbb{R}^{d}))|^{q} \right)^{\frac{p}{q}}\\
  &\leq& 2^{rp}\sum_{k\in \mathbb Z}\left(
  \sum_{j\in \mathbb Z^{d}}\sup\left\{ |f(\eta)|^{q}: \eta\in \bigcup_{p_{0}=-1}^{1}\bigcup_{p_{1}=-1}^{1}...\bigcup_{p_{d}=-1}^{1}Q_{jk+P}   \right\}  \right)^{\frac{p}{q}}\\
   &\leq& 2^{rp}\sum_{p_{0}=-1}^{1}\sum_{p_{1}=-1}^{1}...\sum_{p_{d}=-1}^{1} \sum_{k\in \mathbb Z}\left(
  \sum_{j\in \mathbb Z^{d}} \sup_{\eta\in Q_{jk+P}}|f(\eta)|^{q} \right)^{\frac{p}{q}}\\
  &\leq& 2^{rp} \ 3^{d+1}\sum_{k\in \mathbb Z}\left(
  \sum_{j\in \mathbb Z^{d}} \sup_{\eta\in Q_{jk}}|f(\eta)|^{q} \right)^{\frac{p}{q}}< \infty.
 \end{eqnarray*}
Similarly, the result is true for $p\leq q$  by Corollary 3.5.2 in \cite{gord}. This completes the proof of the assertion.
\end{proof}

\begin{corollary}
 If $f\in \Lambda^{p,q}$, $1\leq p, q< \infty$, $\delta\geq 0$ and $r\in \mathbb N$, then we have
 $$\tau_{r}(f;\delta;M(\mathbb{R}\times \mathbb{R}^{d}))_{p,q} < \infty.$$
\end{corollary}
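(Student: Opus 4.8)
The plan is to read the corollary off from the two immediately preceding results. The key observation is that, by the very definition of the $L^{p,q}$-averaged modulus of smoothness, $\tau_r\bigl(f;\delta;M(\mathbb{R}\times\mathbb{R}^d)\bigr)_{p,q}$ is nothing but the $L^{p,q}(\mathbb{R}\times\mathbb{R}^d)$-norm of the nonnegative measurable function
\[
g(x,\mathbf{y}):=\omega_r\bigl(f,(x,\mathbf{y});\delta;M(\mathbb{R}\times\mathbb{R}^d)\bigr).
\]
Thus the whole assertion reduces to proving $g\in L^{p,q}(\mathbb{R}\times\mathbb{R}^d)$, equivalently $\|g\|_{L^{p,q}}<\infty$.

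First I would apply the preceding proposition, which states precisely that for $f\in\Lambda^{p,q}$ and every $\delta\ge 0$ one has $g=\omega_r(f,\cdot;\delta;M(\mathbb{R}\times\mathbb{R}^d))\in\Lambda^{p,q}$. Then I would use the proposition asserting that $\Lambda^{p,q}$ is a (proper) subspace of $L^{p,q}(\mathbb{R}\times\mathbb{R}^d)$: in particular $g\in\Lambda^{p,q}$ implies $\|g\|_{L^{p,q}}<\infty$. Combining the two gives
\[
\tau_r\bigl(f;\delta;M(\mathbb{R}\times\mathbb{R}^d)\bigr)_{p,q}=\|g\|_{L^{p,q}(\mathbb{R}\times\mathbb{R}^d)}<\infty,
\]
which is exactly the claim.

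I do not foresee any genuine obstacle at this stage, since all the real work has been carried out upstream: the proposition $\omega_r(f,\cdot;\delta;M(\mathbb{R}\times\mathbb{R}^d))\in\Lambda^{p,q}$ does the heavy lifting (covering the cube $C_\delta(z)$ by the $3^{d+1}$ adjacent cells $Q_{jk+P}$, rearranging the iterated sums, and invoking Lemma 2 to reduce $\Lambda^{p,q}$-membership to finiteness of the $l^{p,q}(\Sigma)$-norm on partitions with $\underline{\Delta}\ge r\delta/2$), and the embedding $\Lambda^{p,q}\subset L^{p,q}(\mathbb{R}\times\mathbb{R}^d)$ is exactly the proper-subspace proposition. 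The one point that deserves a line of explanation is the bare identification of $\tau_r$ with $\|\,\omega_r(f,\cdot;\delta;M(\mathbb{R}\times\mathbb{R}^d))\,\|_{L^{p,q}}$, which is immediate from the definition of the averaged modulus; once this is written down, the proof is complete in a single line, and, if one wanted more, an explicit finite bound on $\tau_r\bigl(f;\delta;M(\mathbb{R}\times\mathbb{R}^d)\bigr)_{p,q}$ could be obtained by tracking the constants ($2^{rp}$, $3^{d+1}$, and a power of the mesh size $\bar{\Delta}$) appearing in the proofs of those two propositions.
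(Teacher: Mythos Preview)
Your proposal is correct and follows essentially the same route as the paper: the paper simply states that the corollary ``straightly follows from the above proposition'' (namely, $\omega_r(f,\cdot;\delta;M(\mathbb{R}\times\mathbb{R}^d))\in\Lambda^{p,q}$), and you have merely made explicit the second ingredient, the embedding $\Lambda^{p,q}\subset L^{p,q}(\mathbb{R}\times\mathbb{R}^d)$, together with the identification of $\tau_r$ as the $L^{p,q}$-norm of the local modulus. There is nothing to add.
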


\begin{proof}
    The proof straightly follows from the above proposition.
\end{proof}
\begin{definition} The space $R_{loc}(\mathbb R\times \mathbb R^{d})$ is defined by
   $$ R_{loc}(\mathbb R\times \mathbb R^{d}):=\{ f:\mathbb R\times \mathbb R^{d}\to \mathbb C; \mbox{f is locally Riemann integrable on}\ \mathbb R\times \mathbb R^{d} \}.$$
\end{definition}

\begin{prop}
    For $f\in \Lambda^{p,q}\cap R_{loc}(\mathbb R\times \mathbb R^{d})$, $1\leq p, q< \infty$, $r\in \mathbb N$, we have
    $$ \lim_{\delta\to 0}\tau_{r}(f;\delta;M(\mathbb{R}\times \mathbb{R}^{d}))_{p,q} =0.$$
\end{prop}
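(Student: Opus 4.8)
The plan is to combine the finiteness of the $\tau$-modulus on all of $\mathbb{R}\times\mathbb{R}^{d}$ (Corollary~1) with the Riemann-integrability criterion on compact cells (Proposition~3) and the exhaustion result for $\tau_{r}$ (Proposition~\ref{P1}), so that a single $\varepsilon$-$\delta$ argument on a large enough box finishes everything. First I would fix $\varepsilon>0$. Since $f\in\Lambda^{p,q}$, Corollary~1 gives $\tau_{r}(f;\delta_{0};M(\mathbb{R}\times\mathbb{R}^{d}))_{p,q}<\infty$ for some (hence every) $\delta_{0}\ge 0$; in particular $\omega_{r}(f;\cdot\,;\delta_{0};M(\mathbb{R}\times\mathbb{R}^{d}))\in L^{p,q}(\mathbb{R}\times\mathbb{R}^{d})$, and by absolute continuity of the mixed-norm integral together with Proposition~\ref{P1} one can choose $n$ so large that the "tail'' contribution outside the cube $Q_{n}:=[-n,n]\times[-n,n]^{d}$ is small, i.e.
\[
\tau_{r}\big(f;\delta_{0};M(\mathbb{R}\times\mathbb{R}^{d})\big)_{p,q}-\tau_{r}\big(f;\delta_{0};M(Q_{n})\big)_{p,q}<\varepsilon,
\]
and, more to the point, the $L^{p,q}$-norm of $\omega_{r}(f;\cdot\,;\delta_{0};M(\mathbb{R}\times\mathbb{R}^{d}))$ restricted to the complement of $Q_{n}$ is $<\varepsilon$. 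Since $\omega_{r}(f;(x,\mathbf y);\delta;M(\mathbb{R}\times\mathbb{R}^{d}))$ is non-decreasing in $\delta$ (observation (ii) after the local-modulus definition), for every $0\le\delta\le\delta_{0}$ this same tail bound holds with $\delta_{0}$ replaced by $\delta$. So the tail of $\tau_{r}(f;\delta;M(\mathbb{R}\times\mathbb{R}^{d}))_{p,q}$ is uniformly $<\varepsilon$ for all small $\delta$.

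Next I would handle the compact piece. On the box $Q_{n}$, use the identification from Remark~1: for $(x,\mathbf y)\in Q_{n}$ and $\delta$ small the local modulus $\omega_{r}(f;(x,\mathbf y);\delta;M(\mathbb{R}\times\mathbb{R}^{d}))$ agrees with $\omega_{r}(f;(x,\mathbf y);\delta;M(Q_{n'}))$ for a slightly larger box $Q_{n'}$ (to absorb the boundary shifts), and $f\in R_{loc}(\mathbb{R}\times\mathbb{R}^{d})$ means $f|_{Q_{n'}}$ is Riemann integrable. Proposition~3 then yields $\lim_{\delta\to 0+}\tau_{r}(f;\delta;M(Q_{n'}))_{p,q}=0$, so there is $\delta_{1}\in(0,\delta_{0}]$ with $\tau_{r}(f;\delta;M(Q_{n'}))_{p,q}<\varepsilon$ for all $0<\delta<\delta_{1}$; restricting the integration to $Q_{n}$ only decreases this, so the "box'' contribution to $\tau_{r}(f;\delta;M(\mathbb{R}\times\mathbb{R}^{d}))_{p,q}$ is $<\varepsilon$ as well. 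Splitting the outer $x$-integral (and the inner $\mathbf y$-integral) into the part over $Q_{n}$ and its complement, using the elementary inequality $(\alpha+\beta)^{1/p}\le\alpha^{1/p}+\beta^{1/p}$ (and the analogous one for the inner exponent $p/q$, or Minkowski if one prefers to be careful about the nested norm), one concludes $\tau_{r}(f;\delta;M(\mathbb{R}\times\mathbb{R}^{d}))_{p,q}\lesssim\varepsilon$ for $0<\delta<\delta_{1}$. Since $\varepsilon$ was arbitrary, this is the claim.

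The main obstacle I anticipate is the bookkeeping at the boundary of the box: the local modulus on $\mathbb{R}\times\mathbb{R}^{d}$ near $\partial Q_{n}$ "sees'' values of $f$ slightly outside $Q_{n}$, so one cannot naively equate the restriction to $Q_{n}$ of the global modulus with the modulus of $f|_{Q_{n}}$. Remark~1 is precisely the tool to fix this — one works on $Q_{n'}$ with $n'=n+r\delta_{0}/2$ and notes that on $Q_{n}$ the two moduli coincide for $\delta\le\delta_{0}$ — but care is needed to make sure the enlargement $n'$ is chosen before $\delta$ is sent to $0$, and that Proposition~3 is applied to the fixed compact cell $Q_{n'}$ on which $f$ is genuinely Riemann integrable. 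A secondary technical point is the correct handling of the nested mixed norm when splitting into "box'' plus "tail'': one should either invoke the triangle inequality for $\|\cdot\|_{L^{p,q}}$ directly (valid since $p,q\ge 1$) applied to $\omega_{r}(f;\cdot\,;\delta;M(\mathbb{R}\times\mathbb{R}^{d}))\cdot(\chi_{Q_{n}}+\chi_{Q_{n}^{c}})$, rather than manipulate the $p$-th and $q/p$-th powers separately. With these two points dispatched, the argument is routine.
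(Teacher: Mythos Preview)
Your proposal is correct and follows the paper's strategy almost exactly: split $\tau_{r}$ into a compact-box part and a tail, control the tail uniformly in $\delta$ via Corollary~1 plus monotonicity of the local modulus, and handle the box via the Riemann-integrability criterion on a slightly enlarged cell (your use of Remark~1 to absorb the boundary shift is precisely the device the paper uses). The one difference is packaging: the paper first invokes Lemma~2 to reduce everything to the case $p=q=1$ and performs the splitting in $L^{1,1}$, whereas you stay in $L^{p,q}$ throughout and combine the pieces with the triangle inequality for the mixed norm---your route is marginally more direct and the reduction to $L^{1,1}$ buys nothing essential. One small correction: the Riemann-integrability criterion you cite is Proposition~2 in the paper, not Proposition~3.
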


\begin{proof}
   In view of Lemma 2, it is enough to show the result holds for $L^{1,1}$-norm using $Lemma \ 2$. Let $\epsilon>0$ be given. For large $N>0$, let $A=(\mathbb N, \infty)\cup (-\infty, -\mathbb N)$, we have
    \begin{eqnarray}
       \int_{A}\int_{A^{d}} \omega_{r}(f,(x,\mathbf{y});\delta;M(\mathbb{R}\times \mathbb{R}^{d})) d\bold y dx < \epsilon.
    \end{eqnarray}
    Then for all $0\leq \delta\leq \frac{2}{r}$, we get
    \begin{eqnarray*}
        \tau_{r}(f;\delta;M(\mathbb{R}\times \mathbb{R}^{d}))_{1,1}&=& \int_{[-\mathbb N, \mathbb N]}\int_{[-\mathbb N, \mathbb N]^{d}} \omega_{r}(f,(x,\mathbf{y});\delta;M(\mathbb{R}\times \mathbb{R}^{d})) d\bold y dx\\
        &+&\ \int_{A}\int_{A^{d}} \omega_{r}(f,(x,\mathbf{y});\delta;M(\mathbb{R}\times \mathbb{R}^{d})) d\bold y dx\\
        &\leq& I_{N}+\epsilon,
    \end{eqnarray*}

where $$I_{N}= \int_{[-\mathbb N, \mathbb N]}\int_{[-\mathbb N, \mathbb N]^{d}} \omega_{r}(f,(x,\mathbf{y});\delta;M(\mathbb{R}\times \mathbb{R}^{d})) d\bold y dx. $$
From Proposition 1, we obtain
$$I_{N}\leq \tau_{r}\left(f;\delta;M\left([-N-1,N+1]\times[-N-1,N+1]^{d}\right)\right)_{1,1}. $$
Using Property (iii), we get
$$\lim_{\delta\to 0}\tau_{1}(f;\delta;M(\mathbb{R}\times \mathbb{R}^{d}))_{1,1} =0\ \implies \lim_{\delta\to 0}\tau_{r}(f;\delta;M(\mathbb{R}\times \mathbb{R}^{d}))_{1,1} =0. $$
Since $f$ is locally Riemann integrable on $\mathbb{R}\times \mathbb{R}^{d}$ hence it is Riemann integrable on $[-N-1,N+1]\times[-N-1,N+1]^{d}$. Therefore from Proposition $2$, we have
$$\lim_{\delta\to 0+}\tau_{r}\left(f;\delta;M\left([-N-1,N+1]\times[-N-1,N+1]^{d}\right)\right)_{1,1} = 0 .$$
This completes the proof.
\end{proof}
\begin{prop}
    Let $\displaystyle f\in M\left([a,b]\times\prod_{i=1}^{d}[c_{i},d_{i}]\right)$, $1\leq q\leq p<\infty$, $r\in \mathbb N$ and
  $$\displaystyle \lim_{\delta\to 0+} \frac{\tau_{r}\left(f;\delta;M\left([a,b]\times\prod_{i=1}^{d}[c_{i},d_{i}]\right)\right)_{p,q}}{\delta^{r}}=0.$$
  Then $f$ is a polynomial of degree $\leq (d+1)(r-1)$.
\end{prop}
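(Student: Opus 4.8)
The plan is to transfer the hypothesis from the $\tau$-modulus to the ordinary integral modulus of smoothness, and then to carry out the classical argument that $o(\delta^{r})$-decay of the $r$-th modulus on a box forces $f$ to be a polynomial, while tracking the degree in each of the $d+1$ coordinates.

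First, since $f\in M\!\left([a,b]\times\prod_{i=1}^{d}[c_{i},d_{i}]\right)$ is bounded and the box $B:=[a,b]\times\prod_{i=1}^{d}[c_{i},d_{i}]$ has finite measure, automatically $f\in L^{p,q}(B)$, so $\omega_{r}\!\left(f;\delta;L^{p,q}(B)\right)$ is meaningful. By Proposition~4 (the inequality $\omega_{r}(f;\delta;L^{p,q})\le\tau_{r}(f;\delta;M)_{p,q}$) the hypothesis gives $\omega_{r}\!\left(f;\delta;L^{p,q}(B)\right)=o(\delta^{r})$ as $\delta\to0+$. Because $B$ is bounded, the Hölder estimate $\|g\|_{L^{1,1}(B)}\le C_{B}\|g\|_{L^{p,q}(B)}$ appearing in the proof of Lemma~1 (inequality (a) there, applied to $g=\Delta_{h,\mathbf h}^{r}f$) yields $\omega_{r}\!\left(f;\delta;L^{1,1}(B)\right)=o(\delta^{r})$; since $L^{1,1}(B)=L^{1}(B)$ as a space on $\mathbb R^{d+1}$, it now suffices to show: if $f\in L^{1}(B)$ and the classical $r$-th $L^{1}$-modulus satisfies $\omega_{r}(f;\delta;L^{1}(B))=o(\delta^{r})$, then $f$ agrees a.e.\ on $B$ with an algebraic polynomial of degree $\le(d+1)(r-1)$.

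To prove the reduced statement I would mollify. Write $z=(x,\mathbf y)=(z_{0},z_{1},\dots,z_{d})$ for the coordinates on $\mathbb R^{d+1}$ and $\partial_{i}=\partial/\partial z_{i}$. Fix a sub-box $B'$ with $\overline{B'}$ in the interior of $B$, let $\rho_{\varepsilon}$ be a standard mollifier supported in the $\varepsilon$-ball, and put $f_{\varepsilon}:=f*\rho_{\varepsilon}\in C^{\infty}(\overline{B'})$ for $\varepsilon$ small. Since $\Delta_{h,\mathbf h}^{r}(f*\rho_{\varepsilon})=(\Delta_{h,\mathbf h}^{r}f)*\rho_{\varepsilon}$ and $\|\rho_{\varepsilon}\|_{L^{1}}=1$, convolution does not increase the modulus, so $\omega_{r}(f_{\varepsilon};\delta;L^{1}(B'))=o(\delta^{r})$. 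For each coordinate direction $e_{i}$, $i=0,1,\dots,d$, let $\Delta_{he_{i}}^{r}f_{\varepsilon}$ denote the $r$-th forward difference of $f_{\varepsilon}$ with step $h$ along $e_{i}$; then, on any $B''$ with $\overline{B''}$ in the interior of $B'$ and for $|h|$ small, $\|\Delta_{he_{i}}^{r}f_{\varepsilon}\|_{L^{1}(B'')}\le\omega_{r}(f_{\varepsilon};|h|;L^{1}(B'))$, while smoothness of $f_{\varepsilon}$ gives $h^{-r}\Delta_{he_{i}}^{r}f_{\varepsilon}\to\partial_{i}^{r}f_{\varepsilon}$ uniformly on $\overline{B''}$ as $h\to0$. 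Letting $h\to0$ we obtain $\|\partial_{i}^{r}f_{\varepsilon}\|_{L^{1}(B'')}\le\lim_{h\to0}h^{-r}\,\omega_{r}(f_{\varepsilon};|h|;L^{1}(B'))=0$, hence $\partial_{i}^{r}f_{\varepsilon}\equiv0$ on the interior of $B'$ for every $i$. Therefore $f_{\varepsilon}$ is a polynomial of degree $\le r-1$ in each variable separately, and since a monomial in which every exponent is $\le r-1$ has total degree at most $(d+1)(r-1)$, $f_{\varepsilon}$ lies in the finite-dimensional space $\mathcal P$ of polynomials of degree $\le(d+1)(r-1)$. Finally $f_{\varepsilon}\to f$ in $L^{1}(B')$ as $\varepsilon\to0$ (as $f$ is bounded on the bounded box), and since all $f_{\varepsilon}$ lie in the finite-dimensional space $\mathcal P$, on which all norms are equivalent, the limit is some $P\in\mathcal P$ with $f=P$ a.e.\ on $B'$; exhausting $B$ by such sub-boxes and using that the limiting polynomial is the same on overlaps gives $f=P$ a.e.\ on $B$ with $\deg P\le(d+1)(r-1)$. (If one wants $f=P$ identically, one invokes in addition that the $\tau$-modulus measures the genuine oscillation of $f$, so that $\tau_{r}(f;\delta)/\delta^{r}\to0$ also rules out modifying $P$ on a null set.)

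The point requiring the most care is the extraction of $\partial_{i}^{r}f_{\varepsilon}\equiv0$ for each of the $d+1$ coordinate directions: it rests on the modulus (equivalently $\tau_{r}$) controlling the $r$-th difference of $f$ along each coordinate axis, which is exactly the property one must check against the precise definition of the difference operator $\Delta_{h,\mathbf h}^{r}$, and it is what produces the bound ``degree $\le r-1$ in each variable'', hence total degree $(d+1)(r-1)$. The remaining technicalities — boundedness of convolution on the mixed-norm space (Minkowski's integral inequality), the bookkeeping with the nested boxes $B''\subset B'\subset B$, and the passage of the limit $h\to0$ inside the $L^{1}$ norm for the smooth function $f_{\varepsilon}$ — are routine once this scheme is set up.
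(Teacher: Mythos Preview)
Your route is different from the paper's. The paper does not mollify; it freezes all but one coordinate, writing $\phi(x)=f(x,y_{1},\dots,y_{d})$ with the $y_{i}$ held fixed (and analogously $\phi(y_{j})$), reduces from $\tau_{r}(\cdot)_{p,q}$ to $\tau_{r}(\cdot)_{1,1}$ via Lemma~1, and then invokes the one--dimensional saturation result (Proposition~19 of \cite{Lpbut}) to conclude that each section is a polynomial of degree $\le r-1$; summing over the $d+1$ variables gives the bound $(d+1)(r-1)$. Your mollification argument would be more self-contained, but it has a genuine gap exactly at the step you yourself flag as ``the point requiring the most care.''

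In this paper the difference operator $\Delta_{h,\mathbf h}^{r}$ is defined with $\mathbf h=(h,\dots,h)$, so both the integral modulus $\omega_{r}(f;\delta;L^{p,q})$ and the local modulus underlying $\tau_{r}$ only involve the \emph{diagonal} step $(h,h,\dots,h)$. Your inequality $\|\Delta_{he_{i}}^{r}f_{\varepsilon}\|_{L^{1}(B'')}\le\omega_{r}(f_{\varepsilon};|h|;L^{1}(B'))$ therefore has no justification here: dividing the diagonal difference by $h^{r}$ and letting $h\to0$ recovers $(\partial_{0}+\partial_{1}+\cdots+\partial_{d})^{r}f_{\varepsilon}$, not the individual $\partial_{i}^{r}f_{\varepsilon}$. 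Concretely, with $r=1$, $d=1$ and $f(x,y)=x-y$ one has $\Delta_{h,h}^{1}f\equiv 0$, so the diagonal integral modulus vanishes identically while $\partial_{x}f\not\equiv0$. Passing through Proposition~4 thus discards precisely the information you need. To salvage your scheme you would have to work directly with the $\tau$-modulus hypothesis and extract coordinate-wise control from the supremum structure of the local modulus $\omega_{r}(f,(x,\mathbf y);\delta)$ rather than from the integral modulus, or else follow the paper and reduce to the one-variable $\tau$-saturation theorem.
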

\begin{proof}
    Let us define
    $$ \phi(x):= f(x,y_{1},...,y_{d})\ \mbox{where each} \ y_{i}, i=1,2,...,d \ \mbox{are constant}$$
    similarly, for $j=1,2,...,d$, we define
    $$\phi(y_{j}):= f(x,y_{1},...,y_{d})\ \mbox{where each} \ x and y_{i}, i\neq j \ \mbox{are constant} .$$
  Now we show that each of $\phi(x)$ and $\phi(y_{j})$, $j=1,2,..,d$ are polynomials of degree $\leq r-1$.
   In view of Proposition 19 of \cite{Lpbut}, Lemma 2 and the hypothesis we obtain
   \begin{eqnarray*}
    && \displaystyle \lim_{\delta\to 0+} \frac{\tau_{r}\left(f;\delta;M\left([a,b]\times\prod_{i=1}^{d}[c_{i},d_{i}]\right)\right)_{p,q}}{\delta^{r}}=0\\
    &\implies& \displaystyle \lim_{\delta\to 0+} \frac{\tau_{r}\left(\phi(x);\delta;M\left([a,b]\times\prod_{i=1}^{d}[c_{i},d_{i}]\right)\right)_{p,q}}{\delta^{r}}=0\\
     &\implies& \displaystyle \lim_{\delta\to 0+} \frac{\tau_{r}\left(\phi(x);\delta;M\left([a,b]\times\prod_{i=1}^{d}[c_{i},d_{i}]\right)\right)_{1,1}}{\delta^{r}}=0\\
     &\implies& \displaystyle \lim_{\delta\to 0+} \frac{\tau_{r}(\phi(x);\delta;M([a,b]))_{1}}{\delta^{r}}=0\\
      &\implies& \phi(x)\ \mbox{is a polynomial of degree}\leq r-1.
   \end{eqnarray*}
Similarly, we can say that each of $\phi(y_{j})$ is a polynomial of degree $\leq r-1$.
Thus, $f$ is a polynomial of degree $\leq$ degree of $\phi(x)+ \sum_{j=1}^{d}$ degree of $\phi(y_{j})\leq (d+1)(r-1).$
\end{proof}

\begin{prop}
    If $f\in \Lambda^{p,q}$, $1\leq p, q< \infty$, $\delta\geq 0$ and $r\in \mathbb N$ and  $$\displaystyle \lim_{\delta\to 0+} \frac{\tau_{r}(f;\delta;M(\mathbb R \times \mathbb R^{d}))_{p,q}}{\delta^{r}}=0,$$
    then $f(x,\bold y)=0$ for all $(x,\bold y)\in \mathbb R \times \mathbb R^d$.
\end{prop}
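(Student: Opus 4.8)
The plan is to localise the hypothesis to the boxes $B_{n}:=[-n,n]\times[-n,n]^{d}$, use the preceding Proposition to see that on each $B_{n}$ the function $f$ coincides with a polynomial of degree at most $(d+1)(r-1)$, patch these local polynomials into one global polynomial $P$ with $f=P$ on $\mathbb R\times\mathbb R^{d}$, and then exploit the membership $f\in\Lambda^{p,q}\subseteq L^{p,q}(\mathbb R\times\mathbb R^{d})$ to force $P\equiv0$. The localisation is immediate: the chain of inequalities inside the proof of Proposition~\ref{P1} already shows $\tau_{r}(f;\delta;M(B_{n}))_{p,q}\le\tau_{r}(f;\delta;M(\mathbb R\times\mathbb R^{d}))_{p,q}$ for every $n$ and every $\delta\ge0$, so dividing by $\delta^{r}$ and letting $\delta\to0+$ the hypothesis gives $\lim_{\delta\to0+}\delta^{-r}\tau_{r}(f;\delta;M(B_{n}))_{p,q}=0$ for each fixed $n$.

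Next I would apply the preceding Proposition on $B_{n}$, which is legitimate in the regime $1\le q\le p<\infty$; the complementary case $p<q$ is reduced to it by the same norm interchange (Corollary~3.5.2 of \cite{gord}) that is used repeatedly earlier in the paper. This produces, for each $n$, a polynomial $P_{n}$ with $\deg P_{n}\le(d+1)(r-1)$ and $f=P_{n}$ on $B_{n}$. For $m\ge n$ we then have $P_{m}=P_{n}$ on $B_{n}$, and since the zero set of a non-zero polynomial has Lebesgue measure zero, $P_{m}\equiv P_{n}$; hence the family $(P_{n})$ determines a single polynomial $P$ with $f=P$ on $\bigcup_{n}B_{n}=\mathbb R\times\mathbb R^{d}$. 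Because $\Lambda^{p,q}\subseteq L^{p,q}(\mathbb R\times\mathbb R^{d})$ (shown earlier), it follows that $P\in L^{p,q}(\mathbb R\times\mathbb R^{d})$.

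The remaining point --- and the only one needing a genuine (if short) argument --- is that a non-zero polynomial cannot belong to $L^{p,q}(\mathbb R\times\mathbb R^{d})$ when $1\le p,q<\infty$. Writing $P(x,\mathbf{y})=\sum_{\alpha}a_{\alpha}(x)\,\mathbf{y}^{\alpha}$ as a finite sum with polynomial coefficients, if $P\not\equiv0$ then some $a_{\alpha_{0}}\not\equiv0$, so for every $x$ outside the finite zero set of $a_{\alpha_{0}}$ the map $\mathbf{y}\mapsto P(x,\mathbf{y})$ is a non-zero polynomial on $\mathbb R^{d}$. A non-zero polynomial on $\mathbb R$ is never in $L^{q}(\mathbb R)$ for $q<\infty$, and an induction on the dimension (slice in the last coordinate and apply Tonelli, using that the zero set of a non-zero polynomial in the remaining coordinates is null) extends this to $\mathbb R^{d}$; hence $\int_{\mathbb R^{d}}|P(x,\mathbf{y})|^{q}\,d\mathbf{y}=\infty$ for all such $x$, and therefore $\|P\|_{L^{p,q}}=\infty$, contradicting the previous paragraph. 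Consequently $P\equiv0$, that is, $f\equiv0$ on $\mathbb R\times\mathbb R^{d}$. I expect essentially all the difficulty to be concentrated in this last mixed-norm integrability estimate (and, to a much lesser degree, in spelling out the $p<q$ reduction); the rest is a direct application of the results already established.
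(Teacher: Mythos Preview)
Your proposal is correct and follows essentially the same route as the paper's proof: localise to the boxes $B_n$ via Proposition~\ref{P1}, invoke the preceding polynomial-characterisation proposition on each $B_n$, and then use $f\in\Lambda^{p,q}\subseteq L^{p,q}$ to conclude that the resulting polynomial vanishes. Your write-up is in fact more careful than the paper's, since you explicitly justify the patching of the local polynomials into a single global one, spell out why a non-zero polynomial cannot lie in $L^{p,q}(\mathbb R\times\mathbb R^{d})$, and address the $p<q$ regime (which the preceding proposition, as stated, does not cover).
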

\begin{proof}
    From Proposition 1, we have
    $$\displaystyle \lim_{n\to\infty}\tau_{r}\left(f;\delta;M\left([-n,n]\times[-n,n]^{d}\right)\right)_{p,q}= \tau_{r}(f;\delta;M(\mathbb{R}\times \mathbb{R}^{d}))_{p,q}.$$
   This implies that $f$ is polynomial in $[-n,n]\times[-n,n]^{d}$ for arbitrary $n$ in view of Proposition 8. Since $f\in L^{p,q}(\mathbb R \times \mathbb R^{d})$, we get $f(x,\bold y)=0$, for all $(x,\bold y)\in [-n,n]\times[-n,n]^{d}$. Since $n$ is arbitrary, we obtain $f(x,\bold y)=0$ for all $(x,\bold y)\in \mathbb R \times \mathbb R^d$.
\end{proof}
\section{Error Estimation And An Interpolation Theorem}\label{section4}

We define the modified Steklov function for $f\in \Lambda^{p,q}$ as follows
$$ f_{r,h}=-\sum_{j=1}^{r}(-1)^{j}{\binom{r}{j}}F_{r,h_{j}}$$
where $h_{j}=\frac{jh}{r}$, $h>0$, $r\in \mathbb N$ and the usual Steklov function
$$F_{r,h}(x,\bold y)= \frac{1}{h^{r(d+1)}}\int_{[0,h]}...\int_{[0,h]}\int_{[0,h]^{d}}...\int_{[0,h]^{d}} f\left(x+\sum_{i=1}^{r}u_{i},\bold y+\sum_{i=1}^{r}\bold v_{i}\right)d\bold v_{1}...d\bold v_{r}\ du_{1}...du_{r}, $$
here $u_{i}\in \mathbb R$ and $\bold v_{i}\in \mathbb R^{d}$ for $i=1,2,...,r.$
\begin{prop}
     If $f\in \Lambda^{p,q}$, $1\leq p\leq q< \infty$, $r\in \mathbb N$ and $h>0$, then we have $F_{r,h}\in \Lambda^{p,q}$.
\end{prop}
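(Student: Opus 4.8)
The plan is to prove that $\|F_{r,h}\|_{l^{p,q}(\Sigma)}\leq C\,\|f\|_{l^{p,q}(\Sigma)}<\infty$ for every admissible partition $\Sigma$ whose lower mesh size satisfies $\underline{\Delta}\geq rh$, and then to invoke Lemma 2 (with $\Delta_{*}=rh$) to conclude $F_{r,h}\in\Lambda^{p,q}$. Note first that a function in $\Lambda^{p,q}$ is bounded on each cell of each admissible partition, hence locally bounded, so the iterated integral defining $F_{r,h}$ is absolutely convergent and defines a measurable function; moreover $F_{r,h}$ is an \emph{average} of translates of $f$, since its integration domain $[0,h]^{r}\times([0,h]^{d})^{r}$ has total measure $h^{r(d+1)}$. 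This averaging property is the only feature of $F_{r,h}$ that the argument uses.

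The geometric heart of the proof is a localization. Fix an admissible $\Sigma=(x_{k},y_{j})_{k\in\mathbb Z,\,j\in\mathbb Z^{d}}$ with $\underline{\Delta}\geq rh$. For $z=(x,\mathbf{y})\in Q_{jk}$ and parameters $u_{i}\in[0,h]$, $\mathbf{v}_{i}\in[0,h]^{d}$ ($i=1,\dots,r$), the translated point $z+\bigl(\sum_{i=1}^{r}u_{i},\sum_{i=1}^{r}\mathbf{v}_{i}\bigr)$ has first coordinate in $[x_{k-1},x_{k}+rh)$ and $i$-th $\mathbf{y}$-coordinate in $[y_{i,j_{i}-1},y_{i,j_{i}}+rh)$; since $\underline{\Delta}\geq rh$ forces $x_{k}+rh\leq x_{k+1}$ and $y_{i,j_{i}}+rh\leq y_{i,j_{i}+1}$, this point always lies in
\[
\widehat{Q_{jk}}:=[x_{k-1},x_{k+1})\times\prod_{i=1}^{d}[y_{i,j_{i}-1},y_{i,j_{i}+1}),
\]
which is the disjoint union of the $2^{d+1}$ cells $Q_{j+p,\,k+p_{0}}$ with $p_{0}\in\{0,1\}$ and $p=(p_{1},\dots,p_{d})\in\{0,1\}^{d}$. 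Bounding the value of $f$ at the translated point by $\sup_{\widehat{Q_{jk}}}|f|$ and averaging over the parameters, one obtains, uniformly for $z\in Q_{jk}$,
\[
\sup_{z\in Q_{jk}}|F_{r,h}(z)|\ \leq\ \sup_{\widehat{Q_{jk}}}|f|\ =\ \max_{p_{0}\in\{0,1\},\,p\in\{0,1\}^{d}}\ \sup_{Q_{j+p,\,k+p_{0}}}|f|.
\]

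It remains to carry this estimate through the $l^{p,q}(\Sigma)$-norm, which I would do as follows. Put $S_{k}:=\sum_{j\in\mathbb Z^{d}}\bigl(\sup_{Q_{jk}}|f|\bigr)^{q}\Delta_{jk}$, so that $\|f\|_{l^{p,q}(\Sigma)}^{p}=\sum_{k}S_{k}^{p/q}$. Raising the previous display to the power $q$, bounding the maximum by the sum over the $2^{d+1}$ neighbouring cells, multiplying by $\Delta_{jk}$, using $\underline{\Delta}^{d+1}\leq\Delta_{jk}\leq\bar{\Delta}^{d+1}$ to trade the weight $\Delta_{jk}$ for $\Delta_{j+p,\,k+p_{0}}$ at the cost of the constant $\kappa:=(\bar{\Delta}/\underline{\Delta})^{d+1}$, and reindexing the sum over $j\in\mathbb Z^{d}$, one arrives at $\sum_{j}\bigl(\sup_{Q_{jk}}|F_{r,h}|\bigr)^{q}\Delta_{jk}\leq 2^{d}\kappa\,(S_{k}+S_{k+1})$. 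Now raise to the power $p/q\leq 1$ — this is the only place the assumption $p\leq q$ is used, through subadditivity of $t\mapsto t^{p/q}$ — and sum over $k\in\mathbb Z$ to get
\[
\|F_{r,h}\|_{l^{p,q}(\Sigma)}^{p}\ \leq\ 2\,(2^{d}\kappa)^{p/q}\sum_{k\in\mathbb Z}S_{k}^{p/q}\ =\ 2\,(2^{d}\kappa)^{p/q}\,\|f\|_{l^{p,q}(\Sigma)}^{p}.
\]
For each fixed $\Sigma$ one has $\kappa<\infty$, and $f\in\Lambda^{p,q}$ makes the right-hand side finite, so $\|F_{r,h}\|_{l^{p,q}(\Sigma)}<\infty$; Lemma 2 then yields $F_{r,h}\in\Lambda^{p,q}$.

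The step I expect to be the main obstacle is this last passage through the norm: one has to keep careful track of the $2^{d+1}$ neighbouring cells of $Q_{jk}$, absorb the mesh-ratio constant $\kappa$ incurred by swapping $\Delta_{jk}$ for the weight of a neighbour, and correctly handle the index shift $k\mapsto k+1$ in the outer sum; everything else is routine. (The same scheme in fact covers $p\geq q$ as well, replacing $(a+b)^{p/q}\leq a^{p/q}+b^{p/q}$ by $(a+b)^{p/q}\leq 2^{p/q-1}(a^{p/q}+b^{p/q})$ and, if preferred, invoking Corollary 3.5.2 of \cite{gord} for the mixed-norm comparison; only $p\leq q$ is needed in what follows.)
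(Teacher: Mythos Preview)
Your proof is correct and takes a different route from the paper's. The paper splits $|F_{r,h}|^{q}\leq 2^{q}|F_{r,h}-f|^{q}+2^{q}|f|^{q}$ via Jensen, bounds the first term pointwise by $\omega_{1}(f,(x,\mathbf{y});2(r+1)h;M(\mathbb{R}\times\mathbb{R}^{d}))^{q}$, and then appeals to Proposition~6 (which asserts $\omega_{r}(f,\cdot;\delta)\in\Lambda^{p,q}$ whenever $f\in\Lambda^{p,q}$) to conclude that both pieces have finite $l^{p,q}(\Sigma)$-norm for \emph{every} admissible $\Sigma$. You bypass the modulus machinery entirely: restricting to partitions with $\underline{\Delta}\geq rh$, you use the elementary geometric fact that a translate by at most $rh$ in each coordinate carries $Q_{jk}$ into the union of $2^{d+1}$ neighbouring cells, carry this through the $l^{p,q}(\Sigma)$-norm with an explicit constant, and then invoke the $\Delta_{*}$-characterization of $\Lambda^{p,q}$ (this is Lemma~3 in the paper's numbering, not Lemma~2 as you write). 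Your argument is more self-contained, needing neither the local modulus nor Proposition~6; the paper's approach has the side benefit of working directly for all admissible $\Sigma$ and of rehearsing the pointwise estimate $|F_{r,h}-f|\leq\omega_{1}(f,\cdot;2(r+1)h)$ that is reused in Proposition~11.
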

\begin{proof}
    By using Jensen's inequality, we have \\

    \noindent  $ |F_{r,h}(x,\bold y)|^{q}$
    \begin{eqnarray*}
        &\leq&\frac{1}{h^{r(d+1)}}\int_{[0,h]}...\int_{[0,h]}\int_{[0,h]^{d}}...\int_{[0,h]^{d}} \left|f\left(x+\sum_{i=1}^{r}u_{i},\bold y+\sum_{i=1}^{r}\bold v_{i}\right)\right|^{q}d\bold v_{1}...d\bold v_{r} \ du_{1}...du_{r}\\
        &\leq& \frac{2^{q}}{h^{r(d+1)}}\int_{[0,h]}...\int_{[0,h]}\int_{[0,h]^{d}}...\int_{[0,h]^{d}} \left|f\left(x+\sum_{i=1}^{r}u_{i},\bold y+\sum_{i=1}^{r}\bold v_{i}\right)-f(x,\bold y)\right|^{q}d\bold v_{1}...d\bold v_{r} \ du_{1}...du_{r}\\
        &+& \frac{2^{q}}{h^{r(d+1)}}\int_{[0,h]}...\int_{[0,h]}\int_{[0,h]^{d}}...\int_{[0,h]^{d}} |f(x,\bold y)|^{q}d\bold v_{1}...d\bold v_{r} \ du_{1}...du_{r}\\
        &:=& I_{1}+I_{2}.
    \end{eqnarray*}
    Now, for any admissible sequence $(x_{k},\bold y_{\bold j})_{k\in \mathbb Z,\  \bold j\in \mathbb Z^{d}}$ we obtain
    \begin{eqnarray*}
        I_{1}&=& \frac{2^{q}}{h^{r(d+1)}}\int_{[0,h]}...\int_{[0,h]}\int_{[0,h]^{d}}...\int_{[0,h]^{d}} |\Delta_{\sum_{i=1}^{r}u_{i}, \sum_{i=1}^{r}\bold v_{i}}^{1}f(x,\bold y)|^{q}d\bold v_{1}...d\bold v_{r} \ du_{1}...du_{r}\\
        &\leq& \frac{2^{q}}{h^{r(d+1)}}\int_{[0,h]}...\int_{[0,h]}\int_{[0,h]^{d}}...\int_{[0,h]^{d}}[\omega_{1}(f,(x,\bold y),2(r+1)h,M(\mathbb R \times \mathbb R^{d})]^{q}d\bold v_{1}...d\bold v_{r} \ du_{1}...du_{r}\\
         &\leq& 2^{q}\sup_{(x,\bold y)\in Q_{\bold{j}k}}[\omega_{1}(f,(x,\bold y),2(r+1)h,M(\mathbb R \times \mathbb R^{d})]^{q}.
    \end{eqnarray*}
   Now for $I_{2}$, we have
   $$I_{2}\leq 2^{q}\sup_{(x,\bold y)\in Q_{\bold{j}k}}|f(x,\bold y)|^{q}.$$
   Combining $I_{1}\ \mbox{and}\ I_{2}$, we obtain
   \begin{eqnarray*}
       \sup_{(x,\bold y)\in Q_{\bold{j}k}}|F_{r,h}(x,\bold y)|^{q}&\leq &
       2^{q}\sup_{(x,\bold y)\in Q_{\bold{j}k}}[\omega_{1}(f,(x,\bold y),2(r+1)h,M(\mathbb R \times \mathbb R^{d})]^{q} \\
       &+& 2^{q}\sup_{(x,\bold y)\in Q_{\bold{j}k}}|f(x,\bold y)|^{q}
   \end{eqnarray*}
   Now, taking summation on both sides and using triangle inequality, we get
   \begin{eqnarray*}
      \left( \sum_{\bold j \in \mathbb Z^{d}} \sup_{(x,\bold y)\in Q_{\bold{j}k}}|F_{r,h}(x,\bold y)|^{q}\right)^{\frac{p}{q}}&\leq& 2^{p} \left( \sum_{\bold j \in \mathbb Z^{d}} \sup_{(x,\bold y)\in Q_{\bold{j}k}}[\omega_{1}(f,(x,\bold y),2(r+1)h,M(\mathbb R \times \mathbb R^{d})]^{q}\right)^{\frac{p}{q}}\\
       &+& 2^{p}\left( \sum_{\bold j \in \mathbb Z^{d}} \sup_{(x,\bold y)\in Q_{\bold{j}k}}|f(x,\bold y)|^{q}\right)^{\frac{p}{q}}.
   \end{eqnarray*}
   Again, summing over all $k\in \mathbb Z$ on both sides and in view of Proposition 6, we obtain $F_{r,h}\in \Lambda^{p,q}$.
\end{proof}
We obtain the following corollary as an immediate consequence of the foregoing proposition.
\begin{corollary}
    If $f\in \Lambda^{p,q}$, $1\leq p\leq q< \infty$, $r\in \mathbb N$ and $h>0$, then $f_{r,h}\in \Lambda^{p,q}$.
\end{corollary}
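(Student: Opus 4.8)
The plan is simply to unravel the definition of the modified Steklov function and invoke the foregoing proposition together with the linearity of $\Lambda^{p,q}$. By definition,
$$ f_{r,h}=-\sum_{j=1}^{r}(-1)^{j}\binom{r}{j}F_{r,h_{j}},\qquad h_{j}=\frac{jh}{r}, $$
so $f_{r,h}$ is a finite linear combination of the ordinary Steklov functions $F_{r,h_{1}},\dots,F_{r,h_{r}}$. Each $h_{j}=\frac{jh}{r}$ is a positive real number, hence the foregoing proposition applies with $h$ replaced by $h_{j}$ and gives $F_{r,h_{j}}\in\Lambda^{p,q}$ for every $j\in\{1,\dots,r\}$ (the hypotheses $f\in\Lambda^{p,q}$ and $1\le p\le q<\infty$ are exactly what that proposition requires). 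Thus everything reduces to the assertion that $\Lambda^{p,q}$ is closed under finite linear combinations.

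To see this, fix an admissible partition $\Sigma=(x_{k},\mathbf y_{\mathbf j})_{k\in\mathbb Z,\ \mathbf j\in\mathbb Z^{d}}$ and set $a_{\mathbf j k}(g):=\big(\sup_{z\in Q_{\mathbf j k}}|g(z)|^{q}\,\Delta_{\mathbf j k}\big)^{1/q}$, so that $\|g\|_{l^{p,q}(\Sigma)}$ is the $\ell^{p}$-norm in $k$ of the $\ell^{q}$-norms in $\mathbf j$ of the nonnegative array $(a_{\mathbf j k}(g))$. From $\sup_{z\in Q_{\mathbf j k}}|g_{1}(z)+g_{2}(z)|\le\sup_{z\in Q_{\mathbf j k}}|g_{1}(z)|+\sup_{z\in Q_{\mathbf j k}}|g_{2}(z)|$ we obtain $a_{\mathbf j k}(g_{1}+g_{2})\le a_{\mathbf j k}(g_{1})+a_{\mathbf j k}(g_{2})$ pointwise in $(\mathbf j,k)$, and applying Minkowski's inequality first in the inner $\ell^{q}$-sum over $\mathbf j$ and then in the outer $\ell^{p}$-sum over $k$ yields $\|g_{1}+g_{2}\|_{l^{p,q}(\Sigma)}\le\|g_{1}\|_{l^{p,q}(\Sigma)}+\|g_{2}\|_{l^{p,q}(\Sigma)}$; the homogeneity $\|c\,g\|_{l^{p,q}(\Sigma)}=|c|\,\|g\|_{l^{p,q}(\Sigma)}$ is immediate. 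Since a finite linear combination of measurable functions is measurable, it follows that for $g_{1},\dots,g_{m}\in\Lambda^{p,q}$ and scalars $c_{1},\dots,c_{m}$ one has $\big\|\sum_{i}c_{i}g_{i}\big\|_{l^{p,q}(\Sigma)}\le\sum_{i}|c_{i}|\,\|g_{i}\|_{l^{p,q}(\Sigma)}<\infty$, so $\sum_{i}c_{i}g_{i}\in\Lambda^{p,q}$; that is, $\Lambda^{p,q}$ is a linear subspace of $M(\mathbb R\times\mathbb R^{d})$. (If one prefers to argue through the characterization of $\Lambda^{p,q}$ by admissible sequences, one simply takes $\Delta_{*}$ to be the largest of the finitely many constants associated with the $F_{r,h_{j}}$.)

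Combining the two steps, $\|f_{r,h}\|_{l^{p,q}(\Sigma)}\le\sum_{j=1}^{r}\binom{r}{j}\,\|F_{r,h_{j}}\|_{l^{p,q}(\Sigma)}<\infty$ for every admissible $\Sigma$, and therefore $f_{r,h}\in\Lambda^{p,q}$. There is essentially no obstacle here: the only point that is not completely formal is the triangle inequality for the functional $\|\cdot\|_{l^{p,q}(\Sigma)}$, which follows at once from subadditivity of the supremum over each cube $Q_{\mathbf j k}$ together with Minkowski's inequality in the iterated space $\ell^{p}(\ell^{q})$ (valid for all $1\le p,q<\infty$, so the restriction $p\le q$ enters only through its role in the foregoing proposition).
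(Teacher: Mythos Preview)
Your proof is correct and follows the same approach as the paper, which simply declares the corollary to be ``an immediate consequence of the foregoing proposition.'' You have in fact supplied more detail than the paper: the paper never explicitly verifies that $\Lambda^{p,q}$ is a linear space (Proposition~5 asserts linearity only for $\Omega^{p,q}$ and $W^{r}(L^{p,q})$), whereas you spell out the triangle inequality for $\|\cdot\|_{l^{p,q}(\Sigma)}$ via subadditivity of the supremum and iterated Minkowski, which is exactly what is needed.
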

In order to Prove Lemma 4, we recall the following Theorem from \cite{kgv}.
\begin{thm}
    For each $\beta \in \mathbb N^{d}$ there are natural numbers $N, b_{k}\in \mathbb N$ and vectors $u^{k}, v^{k}\in \mathbb R^{d}$, $u^{k}=(u_{1}^{k},...,u_{d}^{k}), v_{k}=(v_{1}^{k},...,v_{d}^{k})$, $k=1,2,...,N$ such that
    $$ v_{i}^{k}, v_{i}^{k}+|\beta|u_{i}^{k}\leq |\beta_{i}|, i=1,...,d$$
    and for all $h\in X^{d}$ we have
    $$\Delta^{\beta}(h)=\sum_{v=1}^{N}b_{k}\Delta^{|\beta|}\left(\sum_{i=1}^{d} u_{i}^{k}h_{i}\right) T\left( \sum_{i=1}^{d}v_{i}^{k}h_{i}\right),$$
    where $T:Y^{X}\rightarrow Y^{X}$ is a translation operator defined by $T(h)f(x)=f(x+h)$ and $X,Y$ are linear spaces.

\end{thm}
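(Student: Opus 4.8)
The plan is to reduce the mixed difference operator to pure (single–direction) differences of the full order $|\beta|$ by induction on the number of variables $d$. Since the translation operators commute and satisfy $T(g)T(h)=T(g+h)$, the asserted identity takes place entirely inside the commutative algebra generated by $\{T(h):h\in X^{d}\}$, so it is enough to prove it as a universal identity in which $h_{1},\dots,h_{d}$ are treated as independent increments; with the conventions $\Delta^{m}(g):=(T(g)-I)^{m}$ and $\Delta^{\beta}(h)=\prod_{i=1}^{d}\Delta^{\beta_{i}}(h_{i})$, the statement becomes a purely algebraic one. For $d=1$ nothing is to be done, since $\Delta^{\beta}(h)=\Delta^{|\beta|}(h_{1})$; one takes $N=1$, $b_{1}=1$, $u_{1}^{1}=1$, $v_{1}^{1}=0$, and both constraints $v_{1}^{1}\le|\beta_{1}|$ and $v_{1}^{1}+|\beta|u_{1}^{1}=|\beta_{1}|\le|\beta_{1}|$ hold.

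For the inductive step, write $\beta=(\beta',\beta_{d})$ with $\beta'=(\beta_{1},\dots,\beta_{d-1})$ and $m=|\beta'|$, so that $\Delta^{\beta}(h)=\Delta^{\beta'}(h_{1},\dots,h_{d-1})\,\Delta^{\beta_{d}}(h_{d})$. The inductive hypothesis expresses $\Delta^{\beta'}$ as a nonnegative–integer combination of operators $\Delta^{m}(\ell)\,T(\ell')$, where $\ell,\ell'$ are real combinations of $h_{1},\dots,h_{d-1}$ meeting the stated bounds. It therefore suffices to treat each product $\Delta^{m}(\ell)\,\Delta^{\beta_{d}}(h_{d})$, i.e. to establish the two–increment core lemma: for $m,n\in\mathbb N$ there are $M\in\mathbb N$, $b_{k}\in\mathbb N$ and reals $p_{k},q_{k},r_{k},s_{k}$ with $r_{k},\,r_{k}+(m+n)p_{k}\in[0,m]$ and $s_{k},\,s_{k}+(m+n)q_{k}\in[0,n]$ such that $\Delta^{m}(a)\,\Delta^{n}(b)=\sum_{k=1}^{M}b_{k}\,\Delta^{m+n}(p_{k}a+q_{k}b)\,T(r_{k}a+s_{k}b)$. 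Substituting this into the expansion coming from $\beta'$, merging translations via $T(g)T(h)=T(g+h)$, and checking that the shift bounds add correctly, yields the claim for $\beta$; once the core lemma is available this is routine bookkeeping.

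The core lemma is the substantive part. A convenient starting point is the elementary factorization $T(a)-I=\bigl(T(a+b)-I\bigr)-T(a)\bigl(T(b)-I\bigr)$, that is $\Delta^{1}(a)=\Delta^{1}(a+b)-T(a)\Delta^{1}(b)$; raising this to a power and expanding by the binomial theorem transfers order out of the $a$–direction onto the diagonal $a+b$, and iterating the same move on the resulting lower–order mixed factors eventually concentrates all of the order on single directions, at the price of generating further translations $T(\cdot)$. The work is to carry out this reduction so that (a) every coefficient $b_{k}$ that survives is a genuine positive integer, i.e. the alternating binomial signs cancel in the right way, and (b) every shift produced stays in the admissible range $r_{k},\,r_{k}+(m+n)p_{k}\in[0,m]$ together with its $b$–analogue. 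This delicate combinatorial construction is exactly what is done in \cite{kgv}, and I regard it as the main obstacle; the outer induction on $d$ and the reassembly of the translations are, by comparison, straightforward.
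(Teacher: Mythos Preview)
The paper does not prove this theorem at all: it is explicitly introduced with the phrase ``we recall the following Theorem from \cite{kgv}'' and is used as a black box in the proof of Lemma~4. There is therefore no proof in the paper to compare your proposal against.

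Your outline is a reasonable sketch of how the result in \cite{kgv} is actually established, and the inductive reduction to a two-increment core lemma together with the identity $\Delta^{1}(a)=\Delta^{1}(a+b)-T(a)\Delta^{1}(b)$ is the standard starting point. You correctly identify that the genuine difficulty lies in organizing the iterated expansion so that all surviving coefficients are positive integers and all shifts satisfy the stated range constraints, and you appropriately defer that to \cite{kgv}. In other words, your proposal supplies strictly more content than the paper does, but since the paper treats the theorem as an imported result there is nothing further to compare.
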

\begin{lemma}
       If $f\in L^{p,q}(\mathbb R\times \mathbb R^{d})$, $1\leq p\leq q< \infty$, $r\in \mathbb N$ and $h>0$, then there exists a function $f_{r,h}$ with the following estimates:

       \begin{itemize}
           \item[(a)] $ \| f-f_{r,h}\|_{p,q}\leq K(r,d) \ \omega_{r}(f;\delta; L^{p,q}(\mathbb R\times \mathbb R^{d})),$\\

           \item[(b)] $ \delta^{r}\| D^{\beta}f_{r,h}\|_{p,q}\leq K(r,d)\ \omega_{r}(f;\delta; L^{p,q}(\mathbb R\times \mathbb R^{d})), \ |\beta|=r,$\\

           \item[(c)] $ \delta^{|\beta|}\| D^{\beta}f_{r,h}\|_{p,q}\leq K(r,d) \ \omega_{r}(f;\delta; L^{p,q}(\mathbb R\times \mathbb R^{d})),$
       \end{itemize}
       where $K(r,d)$ is positive constant depending only on $r$ and $d$.
\end{lemma}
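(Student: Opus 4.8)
The plan is to establish the three estimates by working with the Steklov-type function $f_{r,h}$ built from the iterated averages $F_{r,h_j}$, following the classical one-dimensional Steklov function technique (as in the Sendov--Popov / Butzer school) but carried out simultaneously in the $x$-variable and in each of the $d$ coordinates of $\mathbf{y}$, and then reducing the multivariate finite differences to the scalar $r$-th differences via Theorem~\ref{thm} (the result quoted from \cite{kgv}). First I would record the algebraic identity that expresses $f-f_{r,h}$ as a linear combination of $r$-th order forward differences of $f$: since $f_{r,h}=-\sum_{j=1}^{r}(-1)^{j}\binom{r}{j}F_{r,h_j}$ and $\sum_{j=0}^{r}(-1)^{j}\binom{r}{j}=0$, one gets $f - f_{r,h} = \sum_{j=0}^{r}(-1)^{j}\binom{r}{j}F_{r,h_j}$ with $F_{r,h_0}=f$ interpreted appropriately; expanding the definition of $F_{r,h}$ and using that the integral of the identity $\sum_{j}(-1)^j\binom{r}{j} f(\cdot+j h_\cdot/ r\cdot)$ reconstructs $(\Delta_{h,\mathbf h}^{r}f)$ after the change of variables, one obtains $f-f_{r,h}$ as an average, over a bounded cube of parameters, of translates of $(\Delta_{u,\mathbf u}^{r}f)$ with $\|u\|\lesssim h$. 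Then (a) follows by taking the $L^{p,q}$ norm inside the (normalized) integral via the generalized Minkowski / Jensen inequality and bounding the resulting integrand by $\omega_{r}(f;\delta;L^{p,q})$, picking up only the constant $K(r,d)$.

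For (b) I would differentiate $F_{r,h}$: each variable is averaged over $[0,h]$ exactly $r$ times, so applying $D^{\beta}$ with $|\beta|=r$ kills $r$ of these outer integrations and converts them into finite-difference operators of step $h$ acting on $f$, with a prefactor $h^{-|\beta|}$. Concretely, $\partial^{r}/\partial x^{r}$ applied to the $r$-fold $x$-average produces $h^{-r}\Delta_{h}^{r}$ in the $x$-direction (by the fundamental theorem of calculus applied $r$ times), and similarly mixed $y$-derivatives produce products of one-variable differences. Here the key technical device is Theorem~\ref{thm}: a pure $\beta$-difference $\Delta^{\beta}(h)$ in $\mathbb{R}^{d}$ is a finite $\mathbb{Z}$-linear combination of terms $\Delta^{|\beta|}(\sum u_i^k h_i)\,T(\sum v_i^k h_i)$, i.e.\ of scalar $|\beta|$-th differences in a single direction composed with a translation — and translations are isometries on $L^{p,q}(\mathbb{R}\times\mathbb{R}^{d})$. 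This lets me bound $\|D^{\beta}f_{r,h}\|_{p,q}$ by a constant times $h^{-r}$ times a sup over admissible steps of $\|\Delta^{r}_{\text{(in one direction)}}f\|_{p,q}$, which is exactly $h^{-r}\omega_{r}(f;h;L^{p,q})$ up to $K(r,d)$; choosing $\delta\asymp h$ (or taking the supremum defining $\omega_r$ over $0\le h\le\delta$) gives (b). For (c), for a multi-index with $|\beta|<r$ one uses the same differentiation count but now only $|\beta|$ of the outer averages are consumed, leaving $r-|\beta|$ harmless averages; the $\beta$-difference that appears is then embedded, again via Theorem~\ref{thm}, into an $|\beta|$-th difference, and the monotonicity/semi-additivity bound $\omega_{|\beta|}(f;\delta;L^{p,q})\le C\,\delta^{|\beta|-r}\omega_{r}(f;\delta;L^{p,q})$ (the standard consequence $\omega_s(f;\delta)\le \delta^{s}\,\delta^{-r}\,C\,\omega_r(f;\delta)$ for $s\le r$, valid because extra averaging only helps) converts the $h^{-|\beta|}$ prefactor into the desired $\delta^{|\beta|}$ on the left and $\omega_r$ on the right.

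I expect the main obstacle to be the bookkeeping of the mixed-norm generalized Minkowski inequality together with the condition $1\le p\le q<\infty$: unlike the ordinary $L^p$ case, pulling the $L^{p,q}(\mathbb{R}\times\mathbb{R}^{d})$ norm inside an average requires the continuous Minkowski inequality applied in the correct order (inner integral over $\mathbb{R}^{d}$ first, then the outer integral over $\mathbb{R}$), and the translation-invariance of $\|\cdot\|_{p,q}$ must be invoked carefully after applying Theorem~\ref{thm}. I would state once and for all the inequality $\big\| \int_{\Omega} g(\cdot,\omega)\,d\mu(\omega)\big\|_{p,q}\le \int_{\Omega}\|g(\cdot,\omega)\|_{p,q}\,d\mu(\omega)$ for $1\le p,q<\infty$ (finite measure $\mu$, here a normalized cube), use it to dispatch (a), and then for (b) and (c) the only remaining subtlety is to verify that the vectors $u^k,v^k$ produced by Theorem~\ref{thm} indeed give steps that stay within the range $0\le\cdot\le\delta$ up to the constant $K(r,d)$ absorbing the $b_k$'s and $N$, so that each appearing difference is controlled by $\omega_{r}(f;\delta;L^{p,q})$. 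Routine estimates — the explicit form of $F_{r,h}$ after differentiation, and the elementary identity $\sum_{j=0}^{r}(-1)^j\binom{r}{j}=0$ — I would not spell out in full.
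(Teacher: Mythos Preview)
Your overall strategy coincides with the paper's: both use the modified Steklov function $f_{r,h}$, write $f-f_{r,h}$ as a normalized integral of $r$-th forward differences to obtain (a) via the generalized Minkowski inequality, and for (b) differentiate $F_{r,h}$ to produce mixed finite differences and then invoke the Binev--Ivanov representation (the theorem quoted from \cite{kgv}) to reduce to one-directional $r$-th differences controlled by $\omega_r$. So for (a) and (b) you are doing exactly what the paper does.

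The gap is in your treatment of (c). You propose to pass from $\omega_{|\beta|}$ to $\omega_r$ via ``the monotonicity/semi-additivity bound $\omega_{|\beta|}(f;\delta)\le C\,\delta^{|\beta|-r}\omega_{r}(f;\delta)$ for $|\beta|\le r$, valid because extra averaging only helps''. That inequality is false in general: take any polynomial $P$ of degree exactly $|\beta|$; then $\omega_{|\beta|}(P;\delta)>0$ while $\omega_r(P;\delta)=0$ for every $r>|\beta|$, so your bound would force $\omega_{|\beta|}(P;\delta)\le 0$. The elementary comparison between moduli goes the \emph{other} way, $\omega_r\le 2^{\,r-s}\omega_s$, and the reverse (Marchaud) direction is an integral inequality that does not collapse to the pointwise form you claim. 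The paper does not invoke any such modulus comparison; it records the identity $D^{\beta}f_{r,h}=-h^{-|\beta|}\sum_{j=1}^{r}(-1)^{j}\binom{r}{j}\Delta^{\beta}_{jh/r,\,j\mathbf h/r}f$ and appeals directly to the Binev--Ivanov theorem. If you want (c) with $\omega_r$ on the right, you need a different mechanism---for instance, exploiting the $r-n_i$ averages that remain in each coordinate after applying $D^{\beta}$, or an interpolation (Landau--Kolmogorov type) between (a) and (b)---rather than the nonexistent modulus inequality.
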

\begin{proof}
    Let us take
    \begin{eqnarray*}
         f_{r,h}(x,\bold y)&=&- \frac{1}{h^{r(d+1)}}\int_{[0,h]}...\int_{[0,h]}\int_{[0,h]^{d}}...\int_{[0,h]^{d}}\sum_{j=1}^{r}(-1)^{j}{\binom{r}{j}}\\
         &\times& f(x+\frac {j}{r}\sum_{i=1}^{r}u_{i},\bold y+\frac {j}{r}\sum_{i=1}^{r}\bold v_{i})
         d\bold v_{1}...d\bold v_{r} \ du_{1}...du_{r}.
    \end{eqnarray*}
   Then, we have
    \begin{eqnarray*}
        \|f-f_{r,h}\|_{p,q}&\leq& \frac{1}{h^{r(d+1)}}\int_{[0,h]}...\int_{[0,h]}\int_{[0,h]^{d}}...\int_{[0,h]^{d}}\left\| \Delta_{\frac{\sum_{i=1}^{r}u_{i}}{r}, \frac{\sum_{i=1}^{r}\bold v_{i}}{r}}^{1}f(x,\bold y)\right\|_{p,q}\\
        &\times& d\bold v_{1}...d\bold v_{r} \ du_{1}...du_{r}.
    \end{eqnarray*}
  By using the Property 5 of the  local modulus of smoothness, we get
  \begin{equation*}
       \|f-f_{r,h}\|_{p,q}\leq \omega_{r}(f;\sqrt{d}\delta; L^{p,q}(\mathbb R\times \mathbb R^{d}))\leq  K(r,d) \omega_{r}(f;\delta; L^{p,q}(\mathbb R\times \mathbb R^{d})),
  \end{equation*}
  where $K(r,d)$ is positive constant depending only on $r$ and $d$.
  To prove the remaining two estimates, we represent the  usual forward difference in terms of mixed differences as follows:
  $$\Delta_{h,\bold h}^{\beta}=\Delta_{h_{0}}^{n_{0}}\Delta_{\bold h}^{|N|}=\Delta_{h_{0}}^{n_{0}}\Delta_{h_{1}}^{n_{1}}...\Delta_{h_{d}}^{n_{d}},$$

  where $ h_{0}=(h,\bold 0)$, $h_{i}=(0,\bold h_{i})$, $\bold {0}=(0,...,0)\in \mathbb R^{d}$  \mbox{and}  $\bold h_{i}=(0,...,h,...,0)\in \mathbb R^{d}$, $h \mbox{is in the}\ i^{th} \mbox {position }$.
We know that
$$ f_{r,h}=-\sum_{j=1}^{r}(-1)^{j}{\binom{r}{j}}F_{r,h_{j}},$$
where $F_{r,h}$ is the usual Steklov function. Now, we claim that
$$ D^{\beta}F_{r,h_{j}}(x,\bold y)=h^{-|\beta|} \Delta_{\frac{jh}{r},\frac{j\bold h}{r}}^{\beta}f(x,\bold y).$$
The proof of this claim follows from the representation of the finite difference as mentioned above and Proposition 21 (i). The representation allows us to use recursively one-dimensional estimate $(d+1)$ times in different variables to get our required estimate. Thus, we obtain
$$  D^{\beta}f_{r,h}(x,\bold y)=-h^{-|\beta|}\sum_{j=1}^{r}(-1)^{j}{\binom{r}{j}} \Delta_{\frac{jh}{r},\frac{j\bold h}{r}}^{\beta}f(x,\bold y). $$
Applying Theorem 1, we have our desired estimate.
\end{proof}

\begin{prop}
    If $f\in \Lambda^{p,q}$, $1\leq p\leq q< \infty$, $r\in \mathbb N$ and $h>0$, then there exists a function $f_{r,h}\in \Lambda^{p,q}$ with the following properties:
    \begin{itemize}
        \item[(a)] $f_{r,h}\in W^{r}(L^{p,q}(\mathbb R\times \mathbb R^{d})\cap C(\mathbb R\times \mathbb R^{d}))$ and for its $r^{th}$ partial derivative $D^{\beta}f_{r,h}$ with $|\beta|=r$ we have
        $$\| D^{\beta}f_{r,h}\|_{p,q}\leq K(r,d) h^{-r}\tau_{r}(f;\delta; M(\mathbb R\times \mathbb R^{d}))_{p,q}, $$
        \item[(b)] $|f(x,\bold y)-f_{r,h}(x,\bold y)|\leq \omega_{r}(f;(x,\bold y);2(r+1)h;M(\mathbb R\times \mathbb R^{d})),$
        \item[(c)] $ \| f-f_{r,h}\|_{p,q}\leq K(r,d) \tau_{r}(f;2(r+1)h; M(\mathbb R\times \mathbb R^{d}))_{p,q}.$
    \end{itemize}
\end{prop}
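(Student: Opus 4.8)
The plan is to take $f_{r,h}$ to be exactly the modified Steklov function introduced at the start of this section; by Corollary 2 it already lies in $\Lambda^{p,q}\subset L^{p,q}(\mathbb R\times\mathbb R^{d})$, so the existence assertion is automatic, and the three estimates will be extracted from the single iterated--integral form of $f_{r,h}$ used in the proof of Lemma 4, together with Proposition 4 and Corollary 1. The key preliminary is the classical Steklov identity. Rewriting $f_{r,h}$ as that one iterated integral and using $\sum_{j=0}^{r}(-1)^{j}\binom{r}{j}=0$ to absorb the term $f(x,\mathbf y)$ into the $j$-sum yields
$$f(x,\mathbf y)-f_{r,h}(x,\mathbf y)=\frac{(-1)^{r}}{h^{r(d+1)}}\int_{[0,h]^{r}}\int_{([0,h]^{d})^{r}}\Big(\Delta^{r}_{\frac1r\sum_{i}u_{i},\,\frac1r\sum_{i}\mathbf v_{i}}f\Big)(x,\mathbf y)\,d\mathbf v_{1}\cdots d\mathbf v_{r}\,du_{1}\cdots du_{r},$$
and here every increment $\tfrac1r\sum_{i}u_{i}$, as well as every coordinate of $\tfrac1r\sum_{i}\mathbf v_{i}$, lies in $[0,h]$, so the $r$-th order difference in the integrand evaluates $f$ only at points within distance $rh$ of $(x,\mathbf y)$ in each coordinate.

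Part (b) is then immediate: taking absolute values, using $\tfrac1{h^{r(d+1)}}\int\!\!\int 1=1$, and bounding the integrand pointwise, observe that since each increment of the $r$-th order difference occurring there is $\le h$, this difference is one of the terms over which the supremum defining $\omega_{r}\big(f,(x,\mathbf y);2h;M(\mathbb R\times\mathbb R^{d})\big)$ extends (the window $[x-rh,x+rh]$ contains $x,\dots,x+rh$, and similarly in the $\mathbf y$-variables); by monotonicity of the local modulus in its smoothness parameter this is dominated by $\omega_{r}\big(f,(x,\mathbf y);2(r+1)h;M(\mathbb R\times\mathbb R^{d})\big)$, which is (b). For (c) apply the $L^{p,q}(\mathbb R\times\mathbb R^{d})$-norm to (b): the left side is finite because $f,f_{r,h}\in L^{p,q}$, and the right side is by definition $\tau_{r}\big(f;2(r+1)h;M(\mathbb R\times\mathbb R^{d})\big)_{p,q}$, which is finite by Corollary 1; one may even take $K(r,d)=1$ here.

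For part (a) the regularity is built into the construction: $f_{r,h}$ is a finite linear combination of $r$-fold box averages of $f$ in each of the $d+1$ coordinates, and since $f\in\Lambda^{p,q}\subset L^{1}_{loc}$ such an average depends continuously on the translation, so $f_{r,h}\in C(\mathbb R\times\mathbb R^{d})$; iterating the one-dimensional Steklov differentiation identity over the coordinates, exactly as in the proof of Lemma 4 through $D^{\beta}F_{r,h_{j}}=h_{j}^{-|\beta|}\Delta^{\beta}_{h_{j},\mathbf h_{j}}f$ for $|\beta|\le r$, exhibits each $D^{\beta}f_{r,h}$, $|\beta|\le r$, as a finite-difference combination of $f$ and hence as an element of $L^{p,q}(\mathbb R\times\mathbb R^{d})$, and shows $f_{r,h}$ has an $AC^{r}_{loc}$ representative, so that $f_{r,h}\in W^{r}(L^{p,q}(\mathbb R\times\mathbb R^{d})\cap C(\mathbb R\times\mathbb R^{d}))$. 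For the displayed bound with $|\beta|=r$, combine estimate (b) of Lemma 4 with Proposition 4: $\delta^{r}\|D^{\beta}f_{r,h}\|_{p,q}\le K(r,d)\,\omega_{r}(f;\delta;L^{p,q})\le K(r,d)\,\tau_{r}(f;\delta;M(\mathbb R\times\mathbb R^{d}))_{p,q}$, and since $\delta$ is a fixed multiple of $h$ this rearranges to $\|D^{\beta}f_{r,h}\|_{p,q}\le K(r,d)\,h^{-r}\,\tau_{r}(f;\delta;M(\mathbb R\times\mathbb R^{d}))_{p,q}$. The step I expect to be the real work is the first paragraph: establishing the Steklov identity cleanly and making sure the $r$-th order difference it produces — whose increments vary independently across coordinates — is genuinely controlled by the local modulus of smoothness (one must keep track of the bound $\le h$ on every increment and invoke the appropriate property of $\omega_{r}$), together with the careful $AC^{r}_{loc}$ bookkeeping needed for the $W^{r}$-membership in (a).
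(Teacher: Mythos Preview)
Your proposal is correct and follows essentially the same route as the paper: part (a) via Lemma~4(b) together with Proposition~4, part (b) by bounding the integrand in the Steklov identity by the local modulus, and part (c) by taking the $L^{p,q}$-norm in (b). The paper's own proof is a three-line sketch citing exactly these ingredients, so your write-up simply fills in the details (the explicit Steklov identity, the $AC^{r}_{loc}$ bookkeeping, and the use of Corollary~2 for $f_{r,h}\in\Lambda^{p,q}$) that the paper leaves implicit.
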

\begin{proof}
Applying the Lemma 4 and Proposition 4, we immediately obtain the part(a). Using a similar analysis discussed in Lemma 4, we obtain (b). On taking $L_{p,q}(\mathbb R \times \mathbb R^{d})$ norm on both sides in (b), we obtain (c).
\end{proof}
\begin{prop}
    Let $f\in \Lambda^{p,q}$, $1\leq q\leq p< \infty$, $r\in \mathbb N$ and $\Sigma$ be an admissible partition with upper mesh size $\overline{\Delta}$. Then we have
    $$ \|\omega_{r}(f;.;2h;M(\mathbb R\times \mathbb R^{d}))\|_{l^{p,q}(\Sigma)}\leq c(r)\tau_{r}\left(f;h+\frac{\overline{\Delta}}{r}; M(\mathbb R \times \mathbb R^{d}\right)_{p,q},$$
    where $c(r)$ is a constant depending on $r$.
\end{prop}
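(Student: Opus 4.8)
The plan is to bridge the discrete $l^{p,q}(\Sigma)$-norm on the left and the continuous $L^{p,q}$-norm that defines $\tau_r$ on the right in three moves. \emph{Move 1 (geometric comparison):} if $z=(x,\mathbf y)$ and $w=(x',\mathbf y')$ both lie in a cube $Q_{jk}$ of $\Sigma$, then their coordinates differ by at most the corresponding side length of $Q_{jk}$, hence by at most $\overline\Delta$; therefore the region $[x-rh,x+rh]\times\prod_i[y_i-rh,y_i+rh]$ occurring in the definition of $\omega_r(f,z;2h;M(\mathbb R\times\mathbb R^d))$ is contained in the region $[x'-rh-\overline\Delta,x'+rh+\overline\Delta]\times\prod_i[y_i'-rh-\overline\Delta,y_i'+rh+\overline\Delta]$ occurring in the definition of $\omega_r\big(f,w;2(h+\tfrac{\overline\Delta}{r});M(\mathbb R\times\mathbb R^d)\big)$, and every admissible step--centre configuration for the former is admissible for the latter. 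Consequently, for every $w\in Q_{jk}$,
\[
\sup_{z\in Q_{jk}}\omega_r\big(f,z;2h;M(\mathbb R\times\mathbb R^d)\big)\ \le\ \omega_r\big(f,w;2(h+\tfrac{\overline\Delta}{r});M(\mathbb R\times\mathbb R^d)\big).
\]

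\emph{Move 2 (integration and summation):} I would raise this inequality to the power $q$ and integrate in $w$ over $Q_{jk}$; since the left side is constant there, $\Delta_{jk}=|Q_{jk}|$, and (for fixed $k$) the cubes $Q_{jk}$, $\mathbf j\in\mathbb Z^d$, tile $[x_{k-1},x_k)\times\mathbb R^d$, summation over $\mathbf j$ gives
\[
\sum_{\mathbf j\in\mathbb Z^d}\Big(\sup_{z\in Q_{jk}}\omega_r(f,z;2h;M(\mathbb R\times\mathbb R^d))\Big)^q\Delta_{jk}\ \le\ \int_{x_{k-1}}^{x_k}\!\Big(\int_{\mathbb R^d}\omega_r\big(f,(x,\mathbf y);2(h+\tfrac{\overline\Delta}{r});M(\mathbb R\times\mathbb R^d)\big)^q\,d\mathbf y\Big)dx .
\]
Raising to the power $p/q\ge1$ (this is exactly where the hypothesis $q\le p$ is used), applying Jensen's inequality on each interval $(x_{k-1},x_k)$, of length at most $\overline\Delta$, and summing over $k\in\mathbb Z$ with $\bigcup_k[x_{k-1},x_k)=\mathbb R$, one arrives at
\[
\|\omega_r(f;\cdot\,;2h;M(\mathbb R\times\mathbb R^d))\|_{l^{p,q}(\Sigma)}\ \le\ c_1\,\tau_r\big(f;2(h+\tfrac{\overline\Delta}{r});M(\mathbb R\times\mathbb R^d)\big)_{p,q},
\]
where $c_1$ is a power of the mesh size.

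\emph{Move 3 (quasi-homogeneity):} finally I would invoke the scaling property $\tau_r(f;\lambda\rho;M(\mathbb R\times\mathbb R^d))_{p,q}\le c(r,\lambda)\,\tau_r(f;\rho;M(\mathbb R\times\mathbb R^d))_{p,q}$ of the averaged modulus with $\lambda=2$ and $\rho=h+\overline\Delta/r$; combined with Move 2 this yields the asserted inequality. The right-hand side is finite because $f\in\Lambda^{p,q}$ (finiteness of $\tau_r$ on $\Lambda^{p,q}$ was established in Section~\ref{section3}), so the statement is non-vacuous.

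Moves 1 and 2 are routine: Move 1 is the geometry of the defining cubes, and Move 2 is the standard passage from a weighted Riemann-type sum to an integral, for which $q\le p$ makes Jensen's inequality available. The step that needs genuine work is Move 3: quasi-homogeneity of the $\tau$-modulus does not follow by merely taking $L^{p,q}$-norms of the pointwise telescoping identity for $\omega_r$ (the local maximal function that then appears is not controlled in $L^{p,q}$), and has to be derived from a finite-difference decomposition of the type of Theorem~1 together with the admissible-partition geometry. A minor technical point is that the mesh-size factor $c_1$ must be absorbed into the final constant, which is legitimate since $\Sigma$ is fixed throughout.
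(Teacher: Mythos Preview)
Your Moves 1 and 2 are exactly the paper's proof: the paper establishes the pointwise comparison $\omega_r(f;z;2h)\le\omega_r\bigl(f;\xi;2(h+\overline\Delta/r)\bigr)$ for all $z,\xi\in Q_{\mathbf jk}$, multiplies by $\Delta_{\mathbf jk}$ to convert the supremum into an integral over $Q_{\mathbf jk}$, sums over $\mathbf j$, raises to $p/q$, and sums over $k$. The paper leaves the last passage from $\sum_k\bigl(\int_{x_{k-1}}^{x_k}G(u)\,du\bigr)^{p/q}$ to $\int_{\mathbb R}G(u)^{p/q}\,du$ implicit; you make it explicit via Jensen, which is the right thing to do and is indeed where $q\le p$ enters.

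Two remarks. First, you overstate the difficulty of Move 3. The paper also uses it silently (its penultimate display has $2(h+\overline\Delta/r)$ but the conclusion has $h+\overline\Delta/r$), and the needed inequality $\tau_r(f;2\rho)_{p,q}\le c(r)\,\tau_r(f;\rho)_{p,q}$ is the standard quasi-homogeneity of the averaged modulus. It is true, as you say, that it is not a pointwise property of the local modulus, but its proof requires only the elementary identity $\Delta_{2h}^r=\sum_{j=0}^r\binom{r}{j}T_{jh}\Delta_h^r$ together with a translation-and-integration argument \`a la Sendov--Popov; Theorem~1 (the mixed-difference decomposition) is not needed here. Second, your point about $c_1$ is well taken: the Jensen step produces a factor $\overline\Delta^{\,1/q-1/p}$, so the constant in the proposition genuinely depends on $\overline\Delta$ and not only on $r$, contrary to what the statement suggests. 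This is harmless in the paper's only application (Theorem~2), where one assumes $\overline{\Delta_\eta}\le r$, so the factor is bounded by $r^{1/q-1/p}$.
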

\begin{proof}
    Let $\Sigma=(x_{k},y_{j})_{{k\in \mathbb Z},\ {j\in \mathbb Z^{d}}}$ be an admissible sequence with upper mesh size $\overline{\Delta}$. Now fix $k\in \mathbb Z, \ \bold{j}\in \mathbb Z^{d}$ and $z=(x,\bold y), \  \bold y=(y_{1},...,y_{d}).$
    Again let $\xi=(u,\eta)$, $\eta=(\eta_{1},...,\eta_{d})\in \mathbb R^{d}$ and $u\in \mathbb R$ be such that $z,\xi \in Q_{\bold{j}k }$.
    Now we have
    $$ [x-rh,x+rh]\times \prod_{i=1}^{d}[y_{i}-rh, y_{i}+rh] \subseteq [u-rh-\overline{\Delta},u+rh+\overline{\Delta}]\times \prod_{i=1}^{d}[\eta_{i}-rh-\overline{\Delta}, \eta_{i}+rh+\overline{\Delta}].$$
    and,
    \begin{eqnarray*}
      \omega_{r}(f;z;2h;M(\mathbb R\times \mathbb R^{d}))  \leq  \omega_{r}\left(f;\xi;2\left(h+\frac{\overline{\Delta}}{r}\right);M(\mathbb R\times \mathbb R^{d})\right), \  \mbox{for} \ z,\xi \in Q_{\bold{j}k}.
    \end{eqnarray*}
Then, we have
$$\sup_{z\in Q_{\bold{j}k}}\left(\omega_{r}\left(f;z;2h;M(\mathbb R\times \mathbb R^{d}\right)\right)^{q}\Delta_{\bold{j}k}\leq
\int_{Q_{\bold{j}k}}\left(\omega_{r}\left(f;\xi;2\left(h+\frac{\overline{\Delta}}{r}\right);M(\mathbb R\times \mathbb R^{d})\right)\right)^{q}d\xi,$$
which implies that\\
\noindent
$\displaystyle \sum_{\bold j \in \mathbb Z^{d}}\sup_{z\in Q_{\bold{j}k}}\left(\omega_{r}(f;z;2h;M(\mathbb R\times \mathbb R^{d}))\right)^{q}\Delta_{\bold{j}k}$
\begin{eqnarray*}
    \leq   \sum_{\bold j \in \mathbb Z^{d}} \int_{Q_{\bold{j}k}}\left(\omega_{r}\left(f;\xi;2\left(h+\frac{\overline{\Delta}}{r}\right);M(\mathbb R\times \mathbb R^{d})\right)\right)^{q} d\xi.
  \end{eqnarray*}
  Then we have\\
  \noindent
  $\displaystyle \sum_{k\in \mathbb Z}\left(\sum_{\bold j \in \mathbb Z^{d}}\sup_{z\in Q_{\bold{j}k}}\left(\omega_{r}(f;z;2h;M(\mathbb R\times \mathbb R^{d}))\right)^{q}\Delta_{\bold{j}k}\right)^{\frac{p}{q}} $
  \begin{eqnarray*}
 \leq  \sum_{k\in \mathbb Z}\left(\sum_{\bold j \in \mathbb Z^{d}} \int_{Q_{\bold{j}k}}[\omega_{r}(f;\xi;2(h+\frac{\overline{\Delta}}{r});M(\mathbb R\times \mathbb R^{d}))]^{q} d\xi\right)^{\frac{p}{q}}
  \end{eqnarray*}
Therefore we have
  \begin{eqnarray*}
   \|\omega_{r}(f;.;2h;M(\mathbb R\times \mathbb R^{d}))\|_{l^{p,q}(\Sigma)}\leq c(r)\tau_{r}\left(f;h+\frac{\overline{\Delta}}{r}; M(\mathbb R \times \mathbb R^{d})\right)_{p,q}.
\end{eqnarray*}
This completes the proof of the proposition.
\end{proof}
Now, we will discuss the basic interpolation theorem for the $L^{p,q}$ averaged modulus of smoothness in $M(\mathbb R \times \mathbb R^{d})$. Let us consider
$\{\mathscr{L}_{\eta}:\Lambda^{p,q}\longrightarrow L^{p,q}({\mathbb R \times \mathbb R^{d}})\}_{\eta \in \mathbb I}$ be a family of linear operators, $\mathbb I$ being an index set. Further, let $\{\Sigma_{\eta}\}_{\eta\in \mathbb I}$, is a family of admissible partitions $((x_{k,\eta},y_{j,\eta})_{{k\in \mathbb Z},{j\in \mathbb Z^{d}}})$ with upper and lower mesh size
$ \overline{\Delta_{\eta}},\ \underline{\Delta_{\eta}}$ respectively and $ \Delta_{\bold {j}k,\eta}$ as defined previously. We have the following theorem
\begin{thm}
   Let $\{\Sigma_{\eta}\}_{\eta\in \mathbb I}$, be a family of admissible partitions with upper mesh size  $\overline{\Delta_{\eta}}$. Let $\{\mathscr{L}_{\eta}\}_{\eta\in \mathbb I}$ be a family of linear operators from $\Lambda^{p,q}$ into $ L^{p,q}({\mathbb R \times \mathbb R^{d}}), \ 1\leq q\leq p< \infty$ with the following properties:
  \begin{itemize}
      \item[(i)] $\|\mathscr{L}_{\eta}f \|_{L^{p,q}({\mathbb R \times \mathbb R^{d}})} \leq C_{1}\ \|f\|_{l^{p,q}(\Sigma_{\eta})}, $
      \item[(ii)] for $g \in W^{r}(L^{p,q}({\mathbb R \times \mathbb R^{d}}))\cap C(\mathbb R \times \mathbb R^{d})$, we have \\
      $\displaystyle \|\mathscr{L}_{\eta}g- g \|_{L^{p,q}({\mathbb R \times \mathbb R^{d}})}\leq C_{2}\  \overline{\Delta_{\eta}}^{s}\sum_{|\beta|=r}\|D^{\beta}g\|_{L^{p,q}({\mathbb R \times \mathbb R^{d}})},$
  \end{itemize}
  for some fixed $r,s\in \mathbb N$ with $s\leq r $ and $C_{1},\ C_{2}$ are constants. Then for each $f\in \Lambda^{p,q}$ and $ \overline{\Delta_{\eta}}\leq r$ then the following estimate holds
  $$  \|\mathscr{L}_{\eta}f- f \|_{L^{p,q}({\mathbb R \times \mathbb R^{d}})}\leq K(r,d) \tau_{r}\left(f;\overline{\Delta_{\eta}}^{\frac{s}{r}};M(\mathbb R \times \mathbb R^{d})\right)_{p,q}.$$
\end{thm}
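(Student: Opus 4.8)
The plan is to run the classical Jackson-type interpolation argument adapted to the mixed-norm Steklov function supplied by Proposition~13. Fix $f\in\Lambda^{p,q}$ and an index $\eta\in\mathbb I$ with $\overline{\Delta_\eta}\le r$. Set $h:=\overline{\Delta_\eta}^{\,s/r}$ and let $g:=f_{r,h}$ be the modified Steklov function of Proposition~13. By that proposition $g\in W^r(L^{p,q}(\mathbb R\times\mathbb R^d))\cap C(\mathbb R\times\mathbb R^d)$, so it is an admissible test function for hypothesis (ii). The triangle inequality gives
$$\|\mathscr L_\eta f-f\|_{L^{p,q}}\le \|\mathscr L_\eta(f-g)\|_{L^{p,q}}+\|\mathscr L_\eta g-g\|_{L^{p,q}}+\|g-f\|_{L^{p,q}},$$
and I would bound the three terms separately.

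For the third term, Proposition~13(c) gives $\|f-g\|_{L^{p,q}}\le K(r,d)\,\tau_r\!\left(f;2(r+1)h;M(\mathbb R\times\mathbb R^d)\right)_{p,q}$, and since $\tau_r$ is monotone in $\delta$ one reabsorbs the factor $2(r+1)$ into the constant, producing $K(r,d)\,\tau_r(f;h;M(\mathbb R\times\mathbb R^d))_{p,q}$ (after possibly adjusting $K(r,d)$, using that $h\ge$ a fixed multiple of $h$). For the middle term, hypothesis (ii) applied to $g$ together with Proposition~13(a) gives
$$\|\mathscr L_\eta g-g\|_{L^{p,q}}\le C_2\,\overline{\Delta_\eta}^{\,s}\sum_{|\beta|=r}\|D^\beta g\|_{L^{p,q}}\le C_2\,\overline{\Delta_\eta}^{\,s}\cdot K(r,d)\,h^{-r}\,\tau_r(f;h;M(\mathbb R\times\mathbb R^d))_{p,q},$$
and the key algebraic point is the calibration $h^{-r}\overline{\Delta_\eta}^{\,s}=\left(\overline{\Delta_\eta}^{\,s/r}\right)^{-r}\overline{\Delta_\eta}^{\,s}=\overline{\Delta_\eta}^{-s}\overline{\Delta_\eta}^{\,s}=1$, so this term is also $\lesssim \tau_r(f;\overline{\Delta_\eta}^{\,s/r};M(\mathbb R\times\mathbb R^d))_{p,q}$.

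The first term is where the $l^{p,q}(\Sigma)$-machinery enters and is the main obstacle. By hypothesis (i), $\|\mathscr L_\eta(f-g)\|_{L^{p,q}}\le C_1\,\|f-g\|_{l^{p,q}(\Sigma_\eta)}$, so I must control the discrete norm of $f-g$ over the partition $\Sigma_\eta$, not its $L^{p,q}$ norm. Here I would invoke the pointwise bound Proposition~13(b), namely $|f(z)-g(z)|\le \omega_r(f;z;2(r+1)h;M(\mathbb R\times\mathbb R^d))$ for every $z$; taking suprema over each cube $Q_{\bold jk}$ of $\Sigma_\eta$ and assembling the $l^{p,q}(\Sigma_\eta)$ norm yields
$$\|f-g\|_{l^{p,q}(\Sigma_\eta)}\le \|\omega_r(f;\cdot;2(r+1)h;M(\mathbb R\times\mathbb R^d))\|_{l^{p,q}(\Sigma_\eta)}.$$
Now Proposition~14 converts this discrete quantity back into a $\tau_r$-modulus: it is bounded by $c(r)\,\tau_r\!\left(f;(r+1)h+\tfrac{\overline{\Delta_\eta}}{r};M(\mathbb R\times\mathbb R^d)\right)_{p,q}$ (applying Proposition~14 with its parameter $h$ replaced by $(r+1)h$). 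Since $\overline{\Delta_\eta}=\overline{\Delta_\eta}^{\,1}\le \overline{\Delta_\eta}^{\,s/r}=h$ whenever $\overline{\Delta_\eta}\le 1$ and $s\le r$ — and the case $\overline{\Delta_\eta}>1$ is handled by the trivial estimate $\tau_r\le 2^r\|f\|_\infty$-type bounds combined with $\overline{\Delta_\eta}\le r$ being a bounded range — the argument $(r+1)h+\overline{\Delta_\eta}/r$ is a fixed multiple of $h$, so monotonicity of $\tau_r$ gives $\lesssim \tau_r(f;h;M(\mathbb R\times\mathbb R^d))_{p,q}=\tau_r(f;\overline{\Delta_\eta}^{\,s/r};M(\mathbb R\times\mathbb R^d))_{p,q}$. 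Combining the three bounds and collecting all constants into a single $K(r,d)$ completes the proof. The delicate points to handle carefully are: checking that $g=f_{r,h}$ lies in $\Lambda^{p,q}$ so that hypothesis (i) is applicable to $f-g$ (this uses Corollary~2 / Proposition~12 with $1\le p\le q$, and one should note the statement's assumption $1\le q\le p$ versus the Steklov results' assumption $1\le p\le q$ — I would remark that the relevant Steklov estimates hold in the needed range via the norm comparison $\|f\|_{L^{p_2,p_1}}\le\|f\|_{L^{p_1,p_2}}$ from \cite{gord} already invoked in Section~3), and the uniform control of the comparison constant between $h$ and $(r+1)h+\overline{\Delta_\eta}/r$ across the admissible range $\overline{\Delta_\eta}\le r$.
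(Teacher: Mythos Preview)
Your proposal is correct and follows essentially the same route as the paper's own proof: the same three-term splitting via the modified Steklov function $f_{r,h}$, the same use of Proposition~12(a),(b),(c) for the smooth-approximation and pointwise bounds, the same application of Proposition~13 to pass from the $l^{p,q}(\Sigma_\eta)$-norm of $\omega_r$ back to $\tau_r$, and the same calibration $h=\overline{\Delta_\eta}^{\,s/r}$. You are in fact slightly more careful than the paper in tracking the factor $2(r+1)$ inside the local modulus and in flagging the $p\le q$ versus $q\le p$ discrepancy between the Steklov propositions and the theorem's hypotheses, which the paper silently passes over.
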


\begin{proof}
    Let us take the modified Steklov function $f_{r,h}$ and consider the following
    \begin{eqnarray}
        && \|\mathscr{L}_{\eta}f- f \|_{L^{p,q}({\mathbb R \times \mathbb R^{d}})}\nonumber \\
        &\leq& \|\mathscr{L}_{\eta}(f-f_{r,h}) \|_{L^{p,q}({\mathbb R \times \mathbb R^{d}})}+\|\mathscr{L}_{\eta}f_{r,h}- f_{r,h} \|_{L^{p,q}({\mathbb R \times \mathbb R^{d}})}+\|f_{r,h}- f \|_{L^{p,q}({\mathbb R \times \mathbb R^{d}})}.
    \end{eqnarray}
   From Proposition 12(iii), we get
  \begin{eqnarray*}
      \| f-f_{r,h}\|_{p,q}\leq K(r,d) \tau_{r}(f;2h; M(\mathbb R\times \mathbb R^{d}))_{p,q}.
  \end{eqnarray*}
  Since $\displaystyle \|\mathscr{L}_{\eta}g- g \|_{L^{p,q}({\mathbb R \times \mathbb R^{d}})}\leq C_{2}\  \overline{\Delta_{\eta}}^{s}\sum_{|\beta|=r}\|D^{\beta}g\|_{L^{p,q}({\mathbb R \times \mathbb R^{d}})},$
   and by Proposition 12 (i), we obtain
   \begin{eqnarray*}
   \|\mathscr{L}_{\eta}f_{r,h}- f_{r,h} \|_{L^{p,q}({\mathbb R \times \mathbb R^{d}})}
       &\leq& C(r,d)\overline{\Delta_{\eta}}^{s} \sum_{|\beta|=r}\|D^{\beta}f_{r,h}\|_{L^{p,q}({\mathbb R \times \mathbb R^{d}})} \\
       &\leq& C^{\prime}(r,d) \overline{\Delta_{\eta}}^{s}h^{-r}\tau_{r}(f;h;M(\mathbb R\times \mathbb R^{d}))_{p,q},
   \end{eqnarray*}
   where $C^{\prime}(r,d)= K(r,d) C(r,d)$
  In view of Proposition 13 and Proposition 12 (ii), and by assumption (i), we obtain.
  \begin{eqnarray*}
       \|\mathscr{L}_{\eta}(f-f_{r,h}) \|_{L^{p,q}({\mathbb R \times \mathbb R^{d}})}&\leq&
     C \| f-f_{r,h}\|_{l^{p,q}(\Sigma_{\eta})}\nonumber \\
     &\leq& C \|\omega_{r}(f,.;2h;M(\mathbb R \times \mathbb R^{d})\|_{l^{p,q}(\Sigma_{\eta})} \\
     &\leq& C \tau_{r}(f;(h+\overline{\Delta_{\eta}});M(\mathbb R \times \mathbb R^{d}))_{p,q}.
  \end{eqnarray*}
 Putting $h=\overline{\Delta_{\eta}}^{\frac{s}{r}}$ and in (4.1) and combining the above estimates, we get\\

 \noindent
 $\|\mathscr{L}_{\eta}f- f \|_{L^{p,q}({\mathbb R \times \mathbb R^{d}})}$
 \begin{eqnarray*}
   &\leq&  C \left\{ \tau_{r}(f;\overline{\Delta_{\eta}}^{\frac{s}{r}};M(\mathbb R \times \mathbb R^{d}))_{p,q}+\tau_{r}(f;2\overline{\Delta_{\eta}}^{\frac{s}{r}};M(\mathbb R \times \mathbb R^{d}))_{p,q} \right\} \\
   &\leq& K(r,d)\  \tau_{r}\left(f;\overline{\Delta_{\eta}}^{\frac{s}{r}};M(\mathbb R \times \mathbb R^{d})\right)_{p,q},
 \end{eqnarray*}
 where  $K(r,d)$ is positive constant depending only on $r$ and $d$. This completes the proof of our theorem.

\end{proof}

\section{Rate of Convergence of Neural Network Sampling Operators in Mixed Lebesgue Spaces}\label{section5}
In this section, we establish the rate of approximation for Neural Network Sampling Operators in mixed Lebesgue spaces. Before discussing the approximation results, we shall define the algebraic and absolute moment as follows:

Let $\psi: \mathbb{R}^{2}\rightarrow \mathbb{R}$. Then for any $ \eta \in \mathbb{N}_{0}= \mathbb{N} \cup \{0 \},$ $ n= (n_{1},n_{2})\in \mathbb{N}_{0}^2$ with $ \vert n \vert = n_{1}+n_{2}= \eta,$ we define the algebraic moments of order $\eta$ as
\begin{eqnarray*}
m_{( n_{1},n_{2})} (\psi,u,v):=\sum_{k=-\infty}^{\infty}\sum_{j=-\infty}^{\infty}
\psi(u-k ,v-j)(k-u)^{n_{1}}(j-v)^{n_{2}}
\end{eqnarray*}
and the absolute moments are defined by
\begin{eqnarray*}
M_{( n_{1},n_{2})} (\psi):=\sum_{k=-\infty}^{\infty}\sum_{j=-\infty}^{\infty}
|\psi(u-k ,v-j)|\ |(k-u)|^{n_{1}} |(j-v)|^{n_{2}}.
\end{eqnarray*}

We define
$ \displaystyle M_{\eta}(\psi):= \max_{\vert p \vert = \eta} \  M_{(n_{1},n_{2})} (\psi).$ We note that $ \displaystyle M_{\eta}(\psi)<\infty,$ for every $0\leq \eta\leq \beta$ in view of section 6 of (See[\cite{CSP}]). We can easily see that for $ \xi,\eta \in \mathbb{N}_{0}$ with $\xi < \eta,$ $ M_{\eta}(\chi)<\infty$ implies that $M_{\xi}(\chi)<\infty.$

Let $\psi(x,y)$ be a sigmoidal function and let $I=[-1,1]$. For $f:I\times I\to \mathbb{R},$  the Neural Network Sampling Operators are defined by
\[  F_{n}f(x,y)=  \displaystyle\frac{ \displaystyle\sum_{k=-n}^{n}\sum_{j=-n}^{n}f{\left( \frac{k}{n}, \frac{j}{n}\right)}\psi_{\sigma}(nx-k, ny-j)}{ \displaystyle\sum_{k=-n}^{n}\sum_{j=-n}^{n}\psi_{\sigma}(nx-k, ny-j)}.\]

Now we shall recall some of the crucial properties enjoyed by $\displaystyle \psi_{\sigma}(x,y)$ (See [\cite{DCS}]).

\begin{lemma}\label{n1}
    \begin{itemize}
    \item[]
        \item[(i)] For every $u,v\in \mathbb{R}$, we have $\displaystyle \sum_{k=-\infty}^{\infty}\sum_{j=-\infty}^{\infty}\psi(u-k ,v-j)=1.$
        \item[(ii)] The series $\displaystyle \sum_{k=-\infty}^{\infty}\sum_{j=-\infty}^{\infty}\psi(u-k ,v-j)$ converges uniformly on the compact subsets of $\mathbb{R}\times \mathbb{R}.$
        \item[(iii)] For $u,v\in I$ and $n\in \mathbb{N}$, we have
        $$ \frac{1}{\displaystyle \sum_{k=-\infty}^{\infty}\sum_{j=-\infty}^{\infty}\psi(nu-k ,nv-j)}\leq \frac{1}{\psi_{\sigma}(1,1)}.$$
    \end{itemize}

\end{lemma}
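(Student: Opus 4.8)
The statement to be proved is Lemma~\ref{n1}, which records three basic properties of the bivariate density $\psi_\sigma(x,y)=\phi_\sigma(x)\phi_\sigma(y)$. The guiding principle is that everything reduces, by the product structure of $\psi_\sigma$, to one–dimensional facts about $\phi_\sigma(v)=\tfrac12[\sigma(v+1)-\sigma(v-1)]$, which in turn follow from the hypotheses $(A_1)$--$(A_3)$ on the sigmoidal function $\sigma$. So the plan is: first establish the one–dimensional partition of unity and the one–dimensional uniform convergence, then tensor them up, and finally read off the positivity bound (iii) from monotonicity of $\phi_\sigma$ on the relevant interval.

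For (i): I would first show $\sum_{k\in\mathbb Z}\phi_\sigma(v-k)=1$ for every $v\in\mathbb R$. Write the telescoping identity $\phi_\sigma(v-k)=\tfrac12[\sigma(v-k+1)-\sigma(v-k-1)]$, group the partial sum $\sum_{k=-N}^{N}\phi_\sigma(v-k)$ so that consecutive terms telescope, leaving only boundary terms of the form $\tfrac12[\sigma(v+N+1)+\sigma(v+N)-\sigma(v-N)-\sigma(v-N-1)]$, and let $N\to\infty$; by the defining limits $\sigma(-\infty)=0$, $\sigma(+\infty)=1$ of a sigmoidal function this tends to $1$. (Absolute convergence, needed to rearrange, comes from $(A_3)$: $\phi_\sigma(v-k)=\mathcal O(|k|^{-1-\alpha})$ as $|k|\to\infty$.) Then, using $\psi_\sigma(u-k,v-j)=\phi_\sigma(u-k)\phi_\sigma(v-j)$ and Tonelli's theorem for the (absolutely convergent) double series, $\sum_{k}\sum_{j}\psi_\sigma(u-k,v-j)=\big(\sum_k\phi_\sigma(u-k)\big)\big(\sum_j\phi_\sigma(v-j)\big)=1$.

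For (ii): I would prove uniform convergence of $\sum_k\phi_\sigma(v-k)$ on a compact set $K\subset\mathbb R$ by a Weierstrass $M$-test. The decay estimate $(A_3)$ gives $\phi_\sigma(v)=\mathcal O(|v|^{-1-\alpha})$ as $v\to-\infty$, and by the odd symmetry of $\sigma(\cdot)-\tfrac12$ in $(A_1)$ one gets $\phi_\sigma(v)=\mathcal O(v^{-1-\alpha})$ as $v\to+\infty$ as well, so $|\phi_\sigma(v-k)|\le C_K|k|^{-1-\alpha}$ uniformly for $v\in K$ and $|k|$ large, and $\sum_k |k|^{-1-\alpha}<\infty$. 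Uniform convergence of the double series on a compact rectangle (hence on any compact subset of $\mathbb R\times\mathbb R$, which is contained in such a rectangle) then follows from the product structure: the partial-sum double array factors as a product of the two one-dimensional partial sums, each converging uniformly and being uniformly bounded, so the product converges uniformly.

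For (iii): by (i), $\sum_{k,j}\psi_\sigma(nu-k,nv-j)=1$ is not directly what is needed — rather I want a \emph{lower} bound on this sum for $u,v\in I=[-1,1]$. Here I would use that $\phi_\sigma$ is nonnegative (it is a difference of values of a non-decreasing $\sigma$) and that, under $(A_2)$ (concavity of $\sigma$ on $[0,\infty)$) together with $(A_1)$, $\phi_\sigma$ is even and non-increasing on $[0,\infty)$ with maximum $\phi_\sigma(0)$; consequently for $v\in[-1,1]$ and any $n\ge1$ one of the lattice points $nv-k$ lies in $[-1,1]$, so the corresponding summand is at least $\phi_\sigma(1)>0$. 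This gives $\sum_{k}\phi_\sigma(nu-k)\ge\phi_\sigma(1)$, hence by the product structure $\sum_{k,j}\psi_\sigma(nu-k,nv-j)\ge\phi_\sigma(1)^2=\psi_\sigma(1,1)$, i.e. the reciprocal is $\le 1/\psi_\sigma(1,1)$.

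The main obstacle is not any single step but making precise, from the bare hypotheses $(A_1)$--$(A_3)$, the two qualitative facts used repeatedly: that $\phi_\sigma$ is even, nonnegative, and unimodal (needed for (iii)), and that it decays like $|v|^{-1-\alpha}$ at \emph{both} infinities (needed for absolute/uniform convergence in (i) and (ii)). Both are short deductions — evenness and nonnegativity from $(A_1)$ and monotonicity of $\sigma$, unimodality on $[0,\infty)$ from $(A_2)$, two-sided decay from $(A_3)$ plus $(A_1)$ — and once they are in hand the rest is telescoping plus an $M$-test. Since these properties are already developed in the cited reference \cite{DCS}, in the write-up I would state them with a brief justification and cite that source rather than reprove them in full.
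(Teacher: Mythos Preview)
Your proposal is correct, and your closing plan---to cite \cite{DCS} rather than reprove in full---is exactly what the paper does: the lemma is stated there without proof and attributed to that reference. Your one-dimensional sketch (telescoping for (i), Weierstrass $M$-test via $(A_1)$--$(A_3)$ for (ii), evenness and unimodality of $\phi_\sigma$ for (iii)) is accurate and in fact more detailed than anything the paper itself provides.
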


In view of the above Lemma, it is easy to see that the above NN operators $F_{n}$ are well-defined. Indeed, we have
\begin{eqnarray*}
     |F_{n}f(x,y)| \leq \displaystyle\frac{ \displaystyle\sum_{k=-n}^{n}\sum_{j=-n}^{n}\left|f{\left( \frac{k}{n}, \frac{j}{n}\right)}\right|\psi_{\bar{\sigma}}(nx-k, ny-j)}{ \displaystyle\sum_{k=-n}^{n}\sum_{j=-n}^{n}\psi_{\bar{\sigma}}(nx-k, ny-j)}\leq \frac{\|f\|_{\infty}}{\psi_{\sigma}(1,1)}M_{(0,0)}(\psi_{\sigma})<+\infty.
\end{eqnarray*}

\begin{lemma}
    Let $\psi(x,y)$ be a multivariate sigmoidal function defined earlier, $f:I\times I\to \mathbb{R}$ be a bounded measurable function then,
    \[ \|F_{n}f\|_{p,q}\leq \|f\|_{\ell^{p,q}(\Sigma)} (\psi_{\bar{\sigma}}(1,1))^{-\frac{1}{2}(\frac{1}{p}+\frac{1}{q})},     \]
    where $\Sigma=\Sigma_{x}\times \Sigma{y}$ is an admissible partition such that $\Sigma_{x}=\left\{\dfrac{k}{n}: k=-n,...,n\right\}$ and $\Sigma_{y}=\left\{\dfrac{j}{n}: j=-n,...,n\right\}$.
\end{lemma}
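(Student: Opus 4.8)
The plan is to exploit two structural features of the operator: the kernel factorises, $\psi_\sigma(x,y)=\phi_\sigma(x)\phi_\sigma(y)$, and $\phi_\sigma\ge 0$ (because $\sigma$ is non-decreasing), so that for every $(x,y)\in I\times I$ the value $F_nf(x,y)$ is a genuine convex combination of the samples $f(k/n,j/n)$, with non-negative weights $\phi_\sigma(nx-k)\phi_\sigma(ny-j)/\bigl(D_n(x)D_n(y)\bigr)$ summing to $1$, where $D_n(t):=\sum_{i=-n}^{n}\phi_\sigma(nt-i)$. Viewing the mixed norm as the iterated norm $\|\cdot\|_{p,q}=\bigl\|\,\|\cdot\|_{L^{q}_{y}(I)}\,\bigr\|_{L^{p}_{x}(I)}$, I would peel off the $y$-variable first and the $x$-variable second, applying Jensen's inequality once in each peeling. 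I will do this for $1\le q\le p<\infty$; the range $p<q$ reduces to it via the norm-reversal inequality $\|g\|_{L^{p,q}}\le\|g\|_{L^{q,p}}$ of \cite{gord}, exactly as in the preceding sections.

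First, fix $x\in I$ and apply Jensen's inequality with the convex function $t\mapsto|t|^{q}$ to the convex combination above:
\[
|F_nf(x,y)|^{q}\ \le\ \sum_{k=-n}^{n}\sum_{j=-n}^{n}\Bigl|f\bigl(\tfrac{k}{n},\tfrac{j}{n}\bigr)\Bigr|^{q}\,\frac{\phi_\sigma(nx-k)}{D_n(x)}\,\frac{\phi_\sigma(ny-j)}{D_n(y)}.
\]
Integrating in $y$ over $I$, using $D_n(y)\ge\sqrt{\psi_\sigma(1,1)}$ for $y\in I$ (the one-variable counterpart of Lemma~\ref{n1}(iii), via $(A_1)$, $(A_2)$ and $\psi_\sigma(1,1)=\phi_\sigma(1)^{2}$) and $\int_{I}\phi_\sigma(ny-j)\,dy\le\frac1n\int_{\mathbb R}\phi_\sigma=\frac1n$, one gets
\[
\int_{I}|F_nf(x,y)|^{q}\,dy\ \le\ \frac{1}{n\sqrt{\psi_\sigma(1,1)}}\cdot\frac{1}{D_n(x)}\sum_{k=-n}^{n}S_k(f)\,\phi_\sigma(nx-k),
\]
where $S_k(f):=\sum_{j=-n}^{n}|f(k/n,j/n)|^{q}$; crucially, $1/D_n(x)$ is left intact at this stage.

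Next I raise this to the power $p/q\ge 1$ and apply Jensen's inequality again, now with $t\mapsto t^{p/q}$ and the weights $\phi_\sigma(nx-k)/D_n(x)$, which moves the power inside the $k$-sum and again leaves $1/D_n(x)$ only to the first power; only then do I use $1/D_n(x)\le 1/\sqrt{\psi_\sigma(1,1)}$, integrate in $x$ over $I$, and apply $\int_{I}\phi_\sigma(nx-k)\,dx\le\frac1n$. Collecting the estimates and taking $p$-th roots gives
\[
\|F_nf\|_{p,q}\ \le\ \psi_\sigma(1,1)^{-\frac12\left(\frac1p+\frac1q\right)}\,\Bigl(\,n^{-\left(1+\frac pq\right)}\sum_{k=-n}^{n}S_k(f)^{p/q}\,\Bigr)^{1/p}.
\]
Finally, bounding each sample $|f(k/n,j/n)|$ by $\sup_{z\in Q_{jk}}|f(z)|$ over the corresponding cell of $\Sigma$ (of side $1/n$ in each variable), the right-hand factor is at most $\|f\|_{\ell^{p,q}(\Sigma)}$, which is the asserted inequality.

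The point requiring care is the \emph{order} in which the lower bound on $D_n$ is invoked. If one replaced $1/D_n(y)$ and $1/D_n(x)$ by $1/\sqrt{\psi_\sigma(1,1)}$ already at the level of $|F_nf|^{q}$, these constants would afterwards be raised to the $q$-th and $(p/q)$-th powers and the final constant would come out as $\psi_\sigma(1,1)^{-1/q}$ rather than $\psi_\sigma(1,1)^{-\frac12(\frac1p+\frac1q)}$. Deferring each bound until its reciprocal occurs only to the first power makes one factor $\phi_\sigma(1)^{-1}$ get ``spread over'' the $L^{q}_{y}$-integration (giving $\phi_\sigma(1)^{-1/q}$ once the $L^{q}_{y}$ norm is taken) and the other over the $L^{p}_{x}$-integration (giving $\phi_\sigma(1)^{-1/p}$), for the total $\phi_\sigma(1)^{-(1/p+1/q)}=\psi_\sigma(1,1)^{-\frac12(\frac1p+\frac1q)}$. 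Beyond this, one must reconcile the powers of $1/n$ produced by the two estimates $\int_{I}\phi_\sigma(nt-m)\,dt\le\frac1n$ with the mesh-size factors built into $\|\cdot\|_{\ell^{p,q}(\Sigma)}$, and treat $p<q$ via \cite{gord}; I expect the correct sequencing of the $1/D_n$ bounds to be the only genuinely delicate point, the rest being a double application of Jensen's inequality.
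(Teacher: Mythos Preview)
Your proposal is correct and follows essentially the same route as the paper: factorise $\psi_\sigma=\phi_\sigma\otimes\phi_\sigma$, apply Jensen's inequality to the convex combination to push $|\cdot|^{q}$ inside, integrate in $y$ using $D_n(y)\ge\phi_\sigma(1)$ and $\int_I\phi_\sigma(ny-j)\,dy\le 1/n$, then apply Jensen again with exponent $p/q$ and the weights $\phi_\sigma(nx-k)/D_n(x)$ before integrating in $x$. Your discussion of the order in which the bound $1/D_n\le 1/\phi_\sigma(1)$ is invoked, and your explicit treatment of $p<q$ via the norm-reversal inequality, are refinements the paper leaves implicit, but the argument is the same.
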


\begin{proof}
    By the definition of the $L^{p,q}$ norm and using Jensen's inequality multiple times, we obtain
    \begin{eqnarray*}
        \|F_{n}f\|_{p,q}^{p}&=& \int_{-1}^{1} \left( \int_{-1}^{1} \left| \frac{\sum_{k=-n}^{n}\sum_{j=-n}^{n}f{( \frac{k}{n}, \frac{j}{n})}\psi_{\sigma}(nx-k, ny-j)}{\sum_{k=-n}^{n}\sum_{j=-n}^{n}\psi_{\sigma}(nx-k, ny-j)} \right|^{q} dy\right)^{\frac{p}{q}}dx\\
        &\leq& \frac{1}{(\phi_{\sigma}(1)^{\frac{p}{q}}} \frac{\int_{-1}^{1} \left(\sum_{k=-n}^{n}\sum_{j=-n}^{n}\left| f{( \frac{k}{n}, \frac{j}{n})} \right|^{q} \left( \int_{-1}^{1} \phi_{\sigma}(ny-j) dy\right)\phi_{\sigma}(nx-k) \right)^{\frac{p}{q}} dx}{\sum_{k=-n}^{n} \phi_{\sigma}(nx-k)}\\
        &\leq& \frac{1}{(\phi_{\sigma}(1)^{\frac{p}{q}+1}} \left(\sum_{k=-n}^{n}\left(\sum_{j=-n}^{n}\left| f{\left( \frac{k}{n}, \frac{j}{n}\right)} \right|^{q} \frac{1}{n} \right)^{\frac{p}{q}}\frac{1}{n} \right)\\
        &\leq& \frac{\|f\|_{\ell^{p,q}(\Sigma)}^{p}}{(\psi_{\bar{\sigma}}(1,1))^{-\frac{1}{2}(\frac{p}{q}+1)}}.
    \end{eqnarray*}
    By taking $p^{th}$ square root on both sides of the above expression, we get the desired estimate.
\end{proof}

Now, we will estimate the approximation error in terms of the $\tau$-modulus of smoothness in $L^{p,q}$ norm.

\begin{thm}
    Let $\sigma$ be a sigmoidal function satisfying $(3)$ with $\beta\geq 2\max{(p,q)}$, $1\leq q\leq p < \infty$. Then for any $f\in M(I\times I)$, we have \[  \|F_{n}f-f\|_{p,q} \leq C_{1} \tau_{1}\left( f, \frac{1}{n}\right)_{p,q}+ C_{2} \tau_{2}\left( f, \frac{1}{n}\right)_{p,q},      \] where $n\in \mathbb{N}$ and $C_{1}, C_{2}$ are positive constants.
\end{thm}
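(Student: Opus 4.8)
The plan is to recognise $F_n$ as a particular member of the family $\{\mathscr L_\eta\}$ to which the interpolation result of Section \ref{section4} (Theorem 2) applies, and to adapt that theorem's proof. I would take the index $\eta=n$ and the uniform admissible partition $\Sigma_n=\Sigma_x\times\Sigma_y$ with $\Sigma_x=\Sigma_y=\{k/n:-n\le k\le n\}$, whose upper and lower mesh sizes both equal $1/n$. Hypothesis (i) of Theorem 2, namely $\|F_nf\|_{p,q}\le C_1\|f\|_{\ell^{p,q}(\Sigma_n)}$ with the constant $C_1$ of the preceding lemma, is then exactly the content of that lemma; the assumption $\beta\ge 2\max(p,q)$ is used only here, to guarantee through the finiteness of the absolute moments $M_\eta(\psi_\sigma)$, $0\le\eta\le\beta$, that $F_n$ maps the relevant spaces into one another. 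Thus everything reduces to a smoothness-type estimate for a regular surrogate.

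The core step is to prove that for every $g\in W^2(L^{p,q}(\mathbb R\times\mathbb R))\cap C(\mathbb R\times\mathbb R)$ one has
\[
\|F_ng-g\|_{p,q}\ \le\ \frac{C}{n}\sum_{|\beta|=1}\|D^\beta g\|_{p,q}\ +\ \frac{C}{n^{2}}\sum_{|\beta|=2}\|D^\beta g\|_{p,q}.
\]
Starting from the Kac-type identity
\[
F_ng(x,y)-g(x,y)=\Big(\textstyle\sum_{k,j}\psi_\sigma(nx-k,ny-j)\Big)^{-1}\sum_{k=-n}^{n}\sum_{j=-n}^{n}\big[g(\tfrac kn,\tfrac jn)-g(x,y)\big]\psi_\sigma(nx-k,ny-j),
\]
I would bound the normalising denominator below by the constant in Lemma \ref{n1}(iii) and expand $g(k/n,j/n)-g(x,y)$ by Taylor's formula with integral remainder of order two. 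The zeroth-order term is annihilated by the partition of unity Lemma \ref{n1}(i); the first-order term produces $n^{-1}$ times a first moment of $\psi_\sigma$ paired with $\nabla g(x,y)$, controlled by $M_1(\psi_\sigma)<\infty$ (this is where assumption $(A_1)$, which forces $\phi_\sigma$ to be even, is used); the second-order remainder is dominated by $n^{-2}M_2(\psi_\sigma)<\infty$ times $D^2g$ along the connecting segments. Passing from this pointwise inequality to the displayed mixed-norm estimate is done by Hölder's inequality applied separately in the two variables together with Fubini's theorem (a generalised Minkowski inequality), exploiting that $D^\beta g$ is translation-invariant in $L^{p,q}$ and that the weights $|k-nx|^{n_1}|j-ny|^{n_2}$ are summable by the moment hypothesis.

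With this estimate available I would follow the scheme of the proof of Theorem 2 (a direct black-box application does not suffice, since the right-hand side above splits into a first- and a second-order part) using the modified Steklov functions $f_{1,1/n}$ and $f_{2,1/n}$. Writing, for a surrogate $f_{r,1/n}$,
\[
\|F_nf-f\|_{p,q}\le\|F_n(f-f_{r,1/n})\|_{p,q}+\|F_nf_{r,1/n}-f_{r,1/n}\|_{p,q}+\|f_{r,1/n}-f\|_{p,q},
\]
the third term is $\lesssim\tau_r(f;1/n)_{p,q}$ by the Steklov estimate (Proposition 12(c)); the first term is $\le C_1\|\omega_r(f;\cdot;2(r+1)/n;M)\|_{\ell^{p,q}(\Sigma_n)}\lesssim\tau_r(f;1/n)_{p,q}$ by Proposition 12(b) and Proposition 13; and the middle term is bounded by the core estimate together with Proposition 12(a) and Lemma 4, which control $\|D^\beta f_{r,1/n}\|_{p,q}$ by $n^{|\beta|}$ times an integral modulus of smoothness, the latter being $\le\tau_{|\beta|}(f;1/n)_{p,q}$ by the proposition $\omega_k\le\tau_k$. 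Taking the first-order contribution through $f_{1,1/n}$ and the second-order one through $f_{2,1/n}$, and absorbing all numerical dilations of the step (such as $6/n$ and $(1+\tfrac12)/n$) into $1/n$ by the monotonicity and dilation properties of $\tau_r$, one collects precisely a multiple of $\tau_1(f;1/n)_{p,q}$ and a multiple of $\tau_2(f;1/n)_{p,q}$, which is the claim.

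The main obstacle I expect is the core estimate for smooth $g$: turning the pointwise Taylor bound into the mixed $L^{p,q}$ bound is delicate because the sampling nodes $k/n$ can lie at distance up to $2$ from $x\in[-1,1]$, so one must estimate norms of $D^\beta g$ over the shrunken domains generated by the integral remainders uniformly in those bounded translations, and the mixed structure forces Hölder to be applied one variable at a time while tracking the monomial weights $(k-nx)^{n_1}(j-ny)^{n_2}$ whose summability is exactly what the moment conditions provide. A secondary technicality is the truncation of the double series to $-n\le k,j\le n$: near the boundary $\{|x|=1\}\cup\{|y|=1\}$ of $I\times I$ the discarded tails do not decay, so one must check that their total contribution is still $O(1/n)$ in the $L^{p,q}$ norm, which relies on the decay assumption $(A_3)$ and again on $\beta\ge 2\max(p,q)$.
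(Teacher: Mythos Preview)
Your overall architecture---triangle inequality through a Steklov surrogate, the $\ell^{p,q}$ operator bound for the outer terms, and a second-order Taylor expansion with moment estimates for the middle term---is exactly the paper's. The paper also sets $h=1/n$ at the end, uses the same Lemma (your ``preceding lemma'') for $\|F_n(\cdot)\|_{p,q}\le C\|\cdot\|_{\ell^{p,q}(\Sigma_n)}$, and bounds the first- and second-order Taylor pieces separately to produce the $\tau_1$ and $\tau_2$ contributions.

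There is, however, a real gap in your extraction of the $\tau_1$ term. You propose to ``take the first-order contribution through $f_{1,1/n}$ and the second-order one through $f_{2,1/n}$'', but the decomposition
\[
\|F_nf-f\|_{p,q}\le\|F_n(f-g)\|_{p,q}+\|F_ng-g\|_{p,q}+\|g-f\|_{p,q}
\]
fixes one surrogate $g$; you cannot feed $f_{1,h}$ into the first-order piece of the middle term and $f_{2,h}$ into the second-order piece, since both sit inside $\|F_ng-g\|$ for the \emph{same} $g$. If you take $g=f_{1,h}$ the core estimate needs $\|D^2f_{1,h}\|_{p,q}$, which is not furnished by Lemma~4; if you take $g=f_{2,h}$, Lemma~4(c) with $r=2$ gives only $\|D^1 f_{2,h}\|_{p,q}\le K h^{-1}\omega_2(f;h)\le Kh^{-1}\tau_2(f;h)$, not $\tau_1$---and bounding the first-order piece by $\tau_2$ alone is false in general (take $f(x,y)=x$, for which $\tau_2\equiv 0$ but $F_nf\neq f$). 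Your reading of Lemma~4 as producing $\omega_{|\beta|}$ rather than $\omega_r$ is the source of the confusion.

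The paper resolves this by using only $g=f_{2,h}$ and invoking, for the first-order piece, the direct estimate
\[
\Big\|\frac{\partial f_{2,h}}{\partial x}\Big\|_{p,q}\le C\,h^{-1}\,\tau_1(f;h)_{p,q},
\]
(and similarly in $y$), which is not Lemma~4(c) but follows immediately from writing the first partial derivative of each Steklov mean $F_{2,h_j}$ as $h_j^{-1}$ times an average of first differences $\Delta^1_{h_j}f$. Once you have this, the first-order Taylor terms yield $C_1\,n^{-1}h^{-1}\tau_1(f;h)$ and the second-order terms $C_2\,n^{-2}h^{-2}\tau_2(f;h)$; setting $h=1/n$ gives the claim with a single surrogate. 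Your choice of the integral form of the remainder is in fact cleaner than the paper's Peano-form remainder (whose treatment there is somewhat informal), and the boundary/truncation issue you flag is handled in the paper simply through the uniform moment bounds $M_{(n_1,n_2)}(\psi_\sigma)<\infty$ and the lower bound of Lemma~\ref{n1}(iii).
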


\begin{proof}
    Let $0<h\leq1$. Putting $r=2$ in Proposition $12$ we have,
    $$\|F_{n}f-f\|_{p,q}\leq \|F_{n}f-F_{n}f_{2,h}\|_{p,q}+\|F_{n}f_{2,h}-f_{2,h}\|_{p,q}+\|f_{2,h}-f\|_{p,q}. $$
    Now using Lemma $5$, we obtain
    \begin{eqnarray*}
        \|F_{n}f-F_{n}f_{2,h}\|_{p,q}&\leq &\|F_{n}(f-f_{2,h})\|_{p,q}\\
        &\leq & \|f-f_{2,h}\|_{\ell^{p,q}(\Sigma)}(\psi_{\bar{\sigma}}(1,1))^{-\frac{1}{2}(\frac{1}{p}+\frac{1}{q})}\\
        &\leq & (\psi_{\bar{\sigma}}(1,1))^{-\frac{1}{2}(\frac{1}{p}+\frac{1}{q})} \left( \displaystyle \sum_{k\in \mathbb Z}\left(\sum_{\bold j \in \mathbb {Z}}\left(\omega_{2}\left(f;\left(\frac{k}{n}, \frac{j}{n}\right);2h\right)\right)^{q}\frac{1}{n}\right)^{\frac{p}{q}}\frac{1}{n}\right)^{\frac{1}{p}}\\
        &\leq & \frac{2^{\frac{1}{q}+6}}{(\psi_{\bar{\sigma}}(1,1))^{-\frac{1}{2}(\frac{1}{p}+\frac{1}{q})}}\ \tau_{2}\left(f, h+\frac{1}{2n} \right)_{p,q}.
    \end{eqnarray*}
    Now, in view of Proposition $12 \ (iii)$, we get
    \[ \|f_{2,h}-f\|_{p,q}\leq K(r)\ \tau_{r}(f,2h)_{p,q} .  \]
    Further, $f_{2,h}\in W_{p,q}^{2}$. Using local Taylor formula for $f$ at $(x,y)$, there exists a bounded function $\zeta$ such that $\lim_{(t,v)\to (0,0)}\zeta(t,v)=0$ and we have
    \begin{eqnarray*}
        f_{2,h}\left(\frac{k}{n}, \frac{j}{n}\right)=&& f_{2,h}(x,y)+\frac{\partial f_{2,h}}{\partial x}(x,y)\left( \frac{j}{n}-y\right)+ \frac{\partial f_{2,h}}{\partial y}(x,y)\left( \frac{j}{n}-y\right)\\
        &&+ \ \frac{1}{2} \frac{\partial^2 f_{2,h}}{\partial x^2}(x,y) \left( \frac{k}{n}-x\right)^{2} + \ \frac{1}{2} \frac{\partial^2 f_{2,h}}{\partial y^2}(x,y) \left( \frac{j}{n}-y\right)^{2}\\
        &&+ \ \frac{1}{2} \frac{\partial f_{2,h}}{\partial x \partial y}(x,y) \left( \frac{k}{n}-x\right)\left( \frac{j}{n}-y\right)\\
        &&+ \ \zeta\left(\frac{k}{n}-x, \frac{j}{n}-y \right)\left[\left( \frac{k}{n}-x\right)^{2}+ \left( \frac{j}{n}-y\right)^{2} \right].
    \end{eqnarray*}
    From the definition of the NN operator, we have
    \begin{eqnarray*}
        F_{n}f_{2,h}(x,y)= &&f_{2,h}(x,y)+ \frac{\frac{\partial f_{2,h}}{\partial x}(x,y)}{n} \frac{\sum_{k=-n}^{n}\sum_{j=-n}^{n} (k-nx)\psi_{\bar{\sigma}}(nx-k, ny-j)}{\sum_{k=-n}^{n}\sum_{j=-n}^{n}\psi_{\bar{\sigma}}(nx-k, ny-j)}\\
        &&+ \frac{\frac{\partial f_{2,h}}{\partial y}(x,y)}{n} \frac{\sum_{k=-n}^{n}\sum_{j=-n}^{n} (j-ny)\psi_{\bar{\sigma}}(nx-k, ny-j)}{\sum_{k=-n}^{n}\sum_{j=-n}^{n}\ \psi_{\bar{\sigma}}(nx-k, ny-j)}\\
        &&+ \frac{\frac{\partial^{2} f_{2,h}}{\partial x^{2}}(x,y)}{n^2} \frac{\sum_{k=-n}^{n}\sum_{j=-n}^{n} (k-nx)^{2}\psi_{\bar{\sigma}}(nx-k, ny-j)}{\sum_{k=-n}^{n}\sum_{j=-n}^{n}\psi_{\bar{\sigma}}(nx-k, ny-j)}\\
        &&+ \frac{\frac{\partial^{2} f_{2,h}}{\partial y^{2}}(x,y)}{n^2} \frac{\sum_{k=-n}^{n}\sum_{j=-n}^{n} (j-ny)^{2}\psi_{\bar{\sigma}}(nx-k, ny-j)}{\sum_{k=-n}^{n}\sum_{j=-n}^{n}\psi_{\bar{\sigma}}(nx-k, ny-j)}\\
        &&+ \frac{\frac{\partial^{2} f_{2,h}}{\partial x \partial y}(x,y)}{n^2} \frac{\sum_{k=-n}^{n}\sum_{j=-n}^{n} (k-nx) (j-ny)\psi_{\bar{\sigma}}(nx-k, ny-j)}{\sum_{k=-n}^{n}\sum_{j=-n}^{n}\psi_{\bar{\sigma}}(nx-k, ny-j)}\\
        &&+ \frac{1}{n^2} \frac{\sum_{k=-n}^{n}\sum_{j=-n}^{n}\zeta\left(\frac{k}{n}-x, \frac{j}{n}-y \right)\left[\left( \frac{k}{n}-x\right)^{2}+ \left( \frac{j}{n}-y\right)^{2} \right]\psi_{\bar{\sigma}}(nx-k, ny-j)}{\sum_{k=-n}^{n}\sum_{j=-n}^{n}\psi_{\bar{\sigma}}(nx-k, ny-j)}\\
       &&=f(x,y)+ A_{1} +A_{2} +A_{3} +A_{4} +A_{5} + R.
    \end{eqnarray*}
Using Minkowski inequality in the above expression, we get
 $$\|F_{n}f_{2,h}-f_{2,h}\|_{p,q}\leq \sum_{i=1}^{5}\|A_{i}\|_{p,q} + \|R\|_{p,q}.$$
 One can easily find the following estimates by using Jensen's inequality to each $A_{i}, \ i= 1,...,5$ with Theorem $12 \ (iii)$.
 \[  \|A_{1}\|_{p,q}\leq \frac{M_{1,0}(\psi_{\bar{\sigma}})}{\psi_{\bar{\sigma}}(1,1)} \frac{1}{n} \left\|\frac{\partial f_{2,h}}{\partial x} \right\|_{p,q}\leq C_{1}^{\prime} \frac{1}{n} h^{-1}\tau_{1}(f,h). \]
 \[  \|A_{2}\|_{p,q}\leq \frac{M_{0,1}(\psi_{\bar{\sigma}})}{\psi_{\bar{\sigma}}(1,1)} \frac{1}{n} \left\|\frac{\partial f_{2,h}}{\partial y} \right\|_{p,q}\leq C_{2}^{\prime} \frac{1}{n} h^{-1}\tau_{1}(f,h)  .\]
 \[  \|A_{3}\|_{p,q}\leq \frac{M_{2,0}(\psi_{\bar{\sigma}})}{\psi_{\bar{\sigma}}(1,1)} \frac{1}{n^2} \left\| \frac{\partial^{2} f_{2,h}}{\partial x^{2}} \right\|_{p,q}\leq C_{3}^{\prime} \frac{1}{n^2} h^{-2}\tau_{2}(f,h)  .\]
 \[  \|A_{4}\|_{p,q}\leq \frac{M_{0,2}(\psi_{\bar{\sigma}})}{\psi_{\bar{\sigma}}(1,1)} \frac{1}{n^2} \left\|\frac{\partial^{2} f_{2,h}}{\partial y^{2}} \right\|_{p,q}\leq C_{4}^{\prime} \frac{1}{n^2} h^{-2}\tau_{2}(f,h)   .\]
 \[  \|A_{5}\|_{p,q}\leq \frac{M_{1,1}(\psi_{\bar{\sigma}})}{\psi_{\bar{\sigma}}(1,1)} \frac{1}{n^2} \left\|\frac{\partial^{2} f}{\partial x \partial y}\right \|_{p,q} \leq C_{5}^{\prime} \frac{1}{n^2} h^{-2}\tau_{2}(f,h)   .\]

 Now, we will estimate the norm for the remainder as follows:
 \begin{eqnarray*}
     \|R\|_{p,q}&=& \left \|\frac{1}{n^2} \frac{\sum_{k=-n}^{n}\sum_{j=-n}^{n}\zeta\left(\frac{k}{n}-x, \frac{j}{n}-y \right)\left[\left( \frac{k}{n}-x\right)^{2}+ \left( \frac{j}{n}-y\right)^{2} \right]\psi_{\bar{\sigma}}(nx-k, ny-j)}{\sum_{k=-n}^{n}\sum_{j=-n}^{n}\psi_{\bar{\sigma}}(nx-k, ny-j)}\right\|_{p,q}\\
     &\leq& D_{1}+D_{2}.
 \end{eqnarray*}
where, $D_{1}$ is estimated by
 \begin{eqnarray*}
     D_{1}&=&  \left \|\frac{1}{n^2} \frac{\sum_{k=-n}^{n}\sum_{j=-n}^{n}\zeta\left(\frac{k}{n}-x, \frac{j}{n}-y \right)\left( \frac{k}{n}-x\right)^{2} \psi_{\bar{\sigma}}(nx-k, ny-j)}{\sum_{k=-n}^{n}\sum_{j=-n}^{n}\psi_{\bar{\sigma}}(nx-k, ny-j)}\right\|_{p,q}\\
     &\leq& \|\zeta\|_{\infty} \frac{1}{n^2} \left \| \frac{\sum_{k=-n}^{n}\sum_{j=-n}^{n}\left( \frac{k}{n}-x\right)^{2} \psi_{\bar{\sigma}}(nx-k, ny-j)}{\sum_{k=-n}^{n}\sum_{j=-n}^{n}\psi_{\bar{\sigma}}(nx-k, ny-j)}\right\|_{p,q}\\
     &\leq&   \|\zeta\|_{\infty} \frac{1}{n^2} \frac{M_{2,0}(\psi_{\bar{\sigma}})}{\psi_{\bar{\sigma}}(1,1)}.
 \end{eqnarray*}
 Similarly, we obtain
$$ D_{2}\leq  \|\zeta\|_{\infty} \frac{1}{n^2} \frac{M_{
0,2}(\psi_{\bar{\sigma}})}{\psi_{\bar{\sigma}}(1,1)}.$$

Since $D_{1}$ and $ D_{2} $ are independent of $f$ and $\zeta$, so for sufficiently large $n$, we obtain that $D_{1},D_{2}\to 0$ as $n\to \infty$. This implies that $R \to 0$ as $n \to \infty$. Hence, by combining all the estimates, we get the desired estimate.
\end{proof}

\section{Examples and Applications}\label{section6}

In this section, we discuss some well-known sigmoidal functions which will satisfy our conditions. As a first example, we consider the logistics sigmoidal function. The logistics sigmoidal function is defined by
${\sigma_{\ell}}(u) := \dfrac{1}{1 + e^{-u}}, \quad u\in \mathbb{R}.$ It is well known that ${\sigma_{\ell}}(u)$ satisfies the conditions $(A_1)-(A_3)$ (see \cite{FCZ}). Since $\psi_{\sigma_{\ell}}(u,v):= \sigma_{\ell}(u).\sigma_{\ell}(v),$ so $\psi_{\sigma_{\ell}}(u,v)$ also satisfies conditions $(A_1)-(A_3)$.\\
Next, we consider the hyperbolic tangent sigmoidal functions as follows:
\[ \psi_{\sigma_{H}}(u,v) := \frac{1}{2}(\tanh{u}+1) \frac{1}{2}(\tanh{v}+1),\]
where $$\tanh{z}=\frac{e^{2z}-1}{e^{2z}-1}.$$ By (see \cite{DCS}), $\tanh{z}$ satisfies conditions $(A_1)-(A_3)$ and hence $\psi_{\sigma_{H}}(u,v)$ also satisfies conditions $(A_1)-(A_3)$.

\subsection{Implementation Results}

Now, we consider the function continuous function  $$f(x,y)=x^2 \sin(x+y)+y^2\cos(xy)$$ and its graph is shown in figure \ref{fig:plot1}.
\begin{figure}[h]
			\centering
			\includegraphics[width=0.5\textwidth]{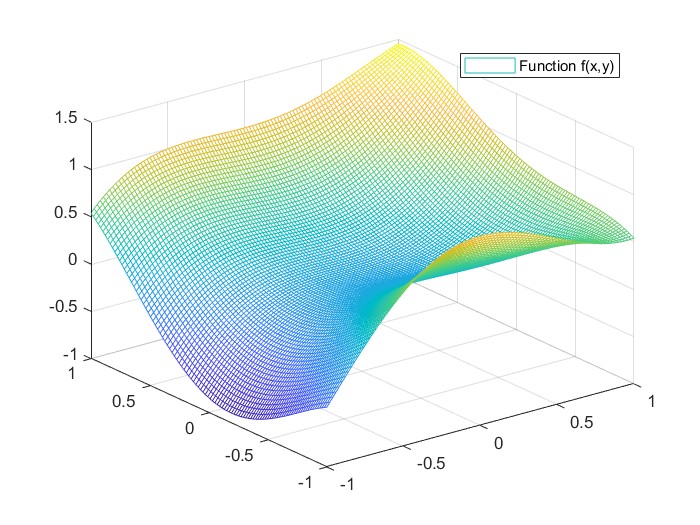}
			\caption{The function $f$   }
			\label{fig:plot1}
		\end{figure}

The accuracy of the approximation $ \|F_{n}f - f\|_{p,q}$ using the logistic activation function $\sigma_{l}(x)\sigma_{l}(y)$ and hyperbolic tangent function $\frac{1}{4}(\tanh (x +1))(\tanh (y +1))$ are presented in Table \ref{tab:error} and Figures \ref{fig:plot2} and \ref{fig:plot3}

\begin{figure}[h]
			\centering
			\includegraphics[width=0.5\textwidth]{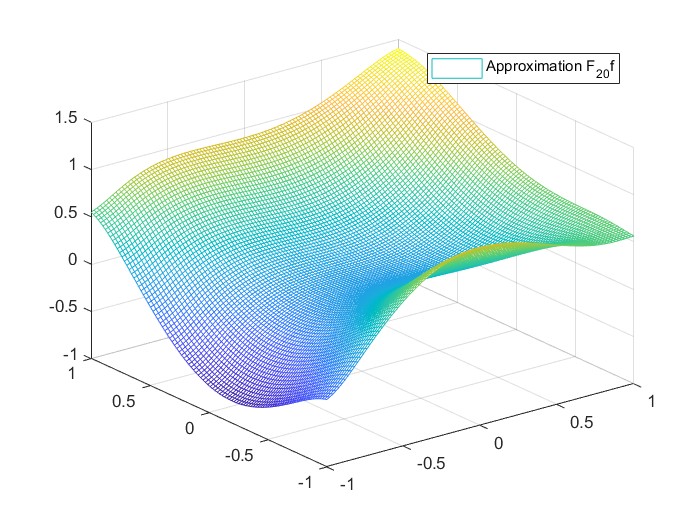} 
			\caption{Approximation  $F_{20}f $ of $f$   using sigmoidal function $\sigma_{l}$.}
			\label{fig:plot2}
		\end{figure}
\begin{figure}[h]
			\centering
			\includegraphics[width=0.5\textwidth]{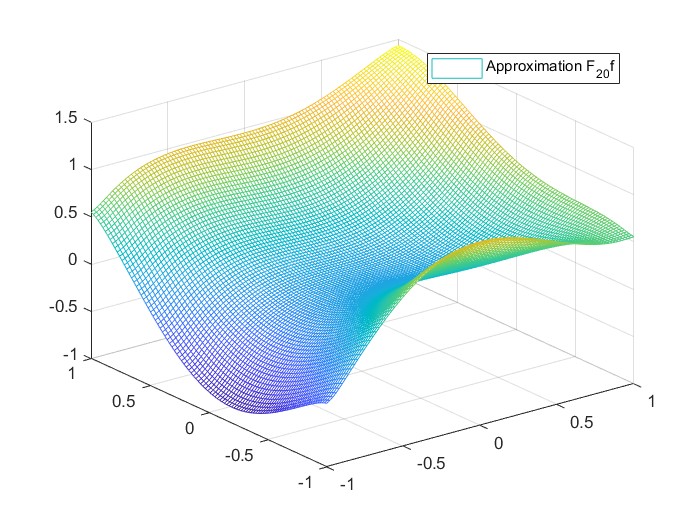}
			\caption{Approximation  $F_{20}f $ of $f$   using hyperbolic tangent function .}
			\label{fig:plot3}
		\end{figure}
		\begin{table}[h]
			\centering
			\begin{tabular}{|l|c|c|c|c|}
				\hline
				$n$ & Activation function & $(p,q)$&$\| F_{n}f - f\|_{p,q} $ &$\| F_{n}g - g\|_{p,q} $   \\
				\hline
				20 &$\sigma_{l}(x)\sigma_{l}(y)$& (2,3) &0.0642 &0.8129\\
                \hline
                30&$\sigma_{l}(x)\sigma_{l}(y)$&(2,3)&0.0385&0.7062\\
                	\hline
               20& $\frac{1}{4}(\tanh (x +1))(\tanh (y +1))$  &(2,3) &0.0263 &0.7638\\
                		\hline
                30& $\frac{1}{4}(\tanh (x +1))(\tanh (y +1))$  &(2,3) &0.0161&0.6597\\
                	\hline
			\end{tabular}
			\caption{Errors   $\|F_{n}f-f\|_{p,q}$ and   $\|F_{n}g-g\|_{p,q}$ in approximation}  
			\label{tab:error}
		\end{table}
Next, we  consider the discontinuous function $$g(x,y)=\left\{\begin{array}{lll}
x^2\sin(x+y)+y^2\cos(xy)+1 & if & x> 0\\
-x^2\sin(x+y)-y^2\cos(xy)-1 & if & x \le 0.
\end{array}\right.$$
The graph of $g$ is shown in figure \ref{fig:plot4}. The accuracy of the approximation $ \|F_{n}g - g\|_{p,q}$ using the logistic activation function $\sigma_{l}(x)\sigma_{l}(y)$ and hyperbolic tangent function $\frac{1}{4}(\tanh (x +1))(\tanh (y +1))$ are presented in Table \ref{tab:error} and Figures \ref{fig:plot5} and \ref{fig:plot6}
\begin{figure}[h]
			\centering
			\includegraphics[width=0.5\textwidth]{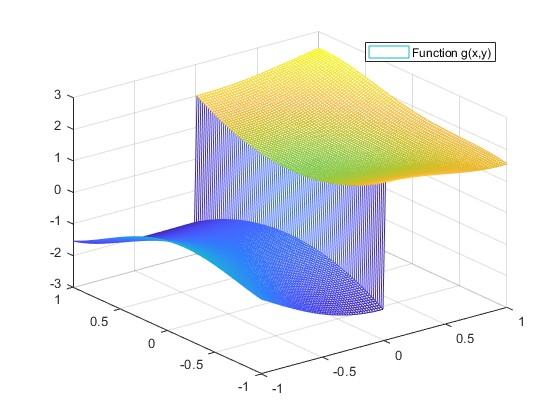} 
			\caption{The function $g$}
			\label{fig:plot4}
		\end{figure}

\begin{figure}[h]
			\centering
			\includegraphics[width=0.5\textwidth]{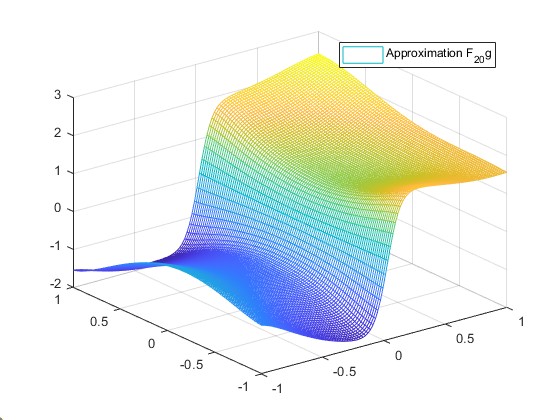} 
			\caption{Approximation  $F_{20}g $ of $g$   using sigmoidal function.}
			\label{fig:plot5}
		\end{figure}

\begin{figure}[h]
			\centering
			\includegraphics[width=0.5\textwidth]{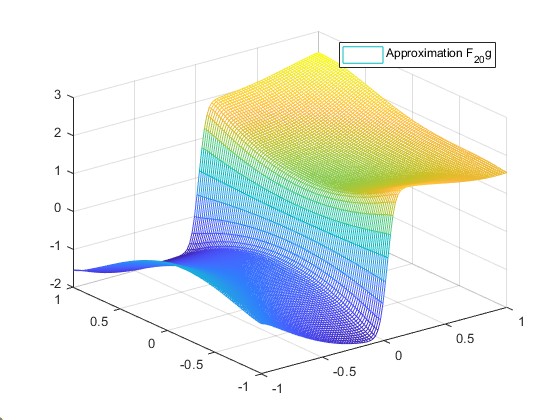} 
			\caption{Approximation  $F_{20}g $ of $g$   using hyperbolic tangent function.}
			\label{fig:plot6}
		\end{figure}

\section{Conclusion}
The rate of approximation of neural network operators in-terms of averaged modulus of smoothness is proved in this paper. In order to achieve our approximation, the properties of averaged modulus of smoothness has been discussed in mixed norm. Further, suitable subspaces $\Lambda^{p,q}(\mathbb R\times \mathbb R^{d}),$ of $L^{p,q}(\mathbb{R}\times\mathbb{R})$ are identified and established that averaged modulus of smoothness is finite for all functions in 
$\Lambda^{p,q}(\mathbb R\times \mathbb R^{d}).$ Using the properties of averaged modulus of smoothness, the rate of approximation of certain linear operators in mixed Lebesgue norm is estimated. As an application of these linear operators, rate of approximation of neural network operators in-terms of averaged modulus of smoothness in mixed norm has been proved. Finally, some examples of sigmoidal functions has been discussed. Using these sigmoidal functions, implementation of continuous and discontinuous functions by neural network operators has been provided.

\end{document}